\newcommand{\C}{{\mathbb {C}}}
\newcommand{\R}{{\mathbb {R}}}
\newcommand{\Q}{{\mathbb {Q}}}
\newcommand{\Z}{{\mathbb {Z}}}
\newcommand{\F}{{\mathcal {F}}}
\newcommand{\ep}{\varepsilon}
\newcommand{\re}{\rho^\varepsilon}
\newcommand{\sm}{\setminus}
\newcommand{\Hom}{{\rm Hom}}
\newcommand{\suchthat}{\mathrel{|}}
\newcommand{\ii} [1] [n] {{}^{I_{#1}}_{I'_{#1}} } 
\newcommand{\iii} [2] {{}^{(I_{n-1},{#1})}_{(I'_{n-1},{#2})} } 
\renewcommand{\S}{\mathfrak{S}}
\newtheorem{thm}{Theorem}[section]
\newtheorem{lem}[thm]{Lemma}
\newtheorem{prop}{Proposition}[section]
\newtheorem{cor}{Corollary}[section]
\theoremstyle{definition}
\newtheorem{dfn}{Definition}[section]
\theoremstyle{remark}
\theoremstyle{nonumber}
\newtheorem{claim}{Claim}
\title{The $T$-equivariant Integral Cohomology Ring of $F_4/T$}
\author{Takashi SATO \footnote{2010 Mathematics Subject Classification. Primary 57T15, Secondary 55N91}\thanks{\texttt{t-sato@math.kyoto-u.ac.jp}}}
\date{\empty}
\begin{document}

\maketitle

\begin{abstract}
We determine the $T$-equivariant integral cohomology of $F_4/T$ combinatorially by the GKM theory,
where $T$ is a maximal torus of the exceptional Lie group $F_4$ and acts on $F_4/T$ by the left multiplication.
\end{abstract}

\section{\textsc{Introduction and statement of the result}}
\label{intro}
Let $G$ be a compact, connected Lie group and $T$ its maximal torus.
The homogeneous space $G/T$ is a flag variety and it plays an important role in topology, algebraic geometry, representation theory, and combinatorics.
In particular, the $T$-equivariant integral cohomology ring $H^*_T(G/T) = H^*(ET \times_T G/T)$ is especially important,
where $T$ acts on $G/T$ by the left multiplication.

Goresky, Kottwitz, and MacPherson \cite{GKM} gave a powerful method to determine the equivariant cohomology with $\Q$ coefficients of some good spaces.
It is called the GKM theory.
Let us explain how the GKM theory works in our situation.
Since the fixed points set $(G/T)^T$ is identified with the Weyl group $W(G)$,
the inclusion $i \colon (G/T)^T \to G/T$ induces the map
\[
i^* \colon H^*_T(G/T) \to H^*_T((G/T)^T) = \prod_{W(G)} H^*(BT) = {\rm Map}(W(G),H^*(BT)).
\]
Tensoring with $\Q$, $i^*$ is injective by the localization theorem (cf. \cite[Theorem (III.1)]{H}).
The GKM theory gives a way to describe the image of this map $i^*$, which is restated by Guillemin and Zara \cite{GZ} as follows.
The image of $i^*$ is completely determined by a graph with additional data obtained from $G$.
Precisely they defined the ``cohomology" ring of the graph as a subring of ${\rm Map}(W(G),H^*(BT))$ and showed that it coincides with the image of $i^*$.
This graph is called a GKM graph.
Harada, Henriques, and Holm \cite{HHH} showed that $i^*$ is injective with integer coefficient when $G$ is simple and is not of type $C$.

By concrete computations by the GKM theory,
for a simple Lie group $G$ of classical types and of type $G_2$,
Fukukawa, Ishida, and Masuda \cite{FIM}, \cite{F} determined the cohomology ring of the GKM graph of $G/T$.
Hence they determined the equivariant integral cohomology ring $H^*_T(G/T)$ for a Lie group $G$ of type $A$, $B$, $D$, and $G_2$.
In this paper we determine the $T$-equivariant integral cohomology ring of $F_4/T$ by the GKM theory.

For $x=(x_1, \ldots, x_n)$, let $e_i(x)$ denote the $i^{\text{th}}$ elementary symmetric polynomial in $x_1, \ldots, x_n$.
Put $x^k=(x_1^k, \ldots, x_n^k)$.
For a linear transformation $\alpha$ of $\R x_1 \oplus \cdots \oplus \R x_n$, let $\alpha x = (\alpha x_1, \ldots, \alpha x_n)$.
Then $e_i(x^k)$ and $e_i(\alpha x)$ denote
the $i^{\text{th}}$ elementary symmetric polynomial in $x_1^k, \ldots, x_n^k$ and $\alpha x_1, \ldots, \alpha x_n$, respectively.
The following theorem is the main result of this paper.
In this theorem
$t=(t_1, t_2, t_3, t_4)$, $\tau=(\tau_1, \tau_2, \tau_3, \tau_4)$,
and $\rho$ is the linear transformation of $\R t_1 \oplus \cdots \oplus \R t_4$ defined as \eqref{rho}.
\begin{thm}
\label{main}
Let $T$ be a maximal torus of $F_4$ which acts on $F_4/T$ by the left multiplication,
then the $T$-equivariant integral cohomology ring of $F_4/T$ is given as:
\[
H^*_T(F_4/T) \cong \Z[t_i, \gamma, \tau_i, \gamma_i, \omega \suchthat 1 \leq i \leq 4]/(r_1', R_i, r_{2i}, r_{12} \suchthat 1 \leq i \leq 4),
\]
where $|t_i| = |\gamma| = |\tau_i| = 2$, $|\gamma_i| = 2i$, $|\omega|=8$,
\begin{align*}
&r_1' = e_1(t) -2\gamma, 							&& R_i = e_i(\tau) -e_i(t) -2\gamma_i \:\: (i=1,2,3),\\
&r_{12} = \omega(\omega-e_4(\rho t))(\omega+e_4(\rho^2 t)),		&& R_4 = e_4(\tau)-e_4(t) -2\gamma_4 -\omega,\\
&r_2 = \sum_{j=1}^{2} (-1)^j \gamma_j(\gamma_{2-j} +e_{2-j}(t)), 	&& r_4 = \sum_{j=1}^{4} (-1)^j \gamma_j(\gamma_{4-j} +e_{4-j}(t)) -\omega,\\
&r_6 = \sum_{j=2}^{4} (-1)^j \gamma_j(\gamma_{6-j} +e_{6-j}(t)) +(\gamma_2 + \gamma^2) \omega, && r_8 = \gamma_4(\gamma_{4} +e_{4}(t)) +\omega^2 + (\gamma_4 -e_4(\rho t))\omega.
\end{align*}
\end{thm}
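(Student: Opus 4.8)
The plan is to identify $H^*_T(F_4/T)$ with the graph cohomology ring of its GKM graph and then to exhibit an isomorphism between the latter and the presented ring. By Harada--Henriques--Holm (applicable since $F_4$ is simple and not of type $C$) together with the Guillemin--Zara description of the image of $i^*$, we have $H^*_T(F_4/T)\cong\mathcal H$, where
\[
\mathcal H=\Bigl\{\, f\colon W(F_4)\to H^*(BT)\ \Big|\ w(\alpha)\ \text{divides}\ f(w)-f(w s_\alpha)\ \text{for all } w\in W(F_4),\ \alpha\in\Phi^+\,\Bigr\}\subseteq\mathrm{Map}\bigl(W(F_4),H^*(BT)\bigr),
\]
the edge of the GKM graph joining $wT$ and $w s_\alpha T$ carrying the label $w(\alpha)$ up to sign. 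Here $H^*(BT)=\mathrm{Sym}_\Z(\Lambda)$, $\Lambda$ the weight lattice of $F_4$, realized in $\R t_1\oplus\cdots\oplus\R t_4$ as $\Z t_1+\cdots+\Z t_4+\Z\gamma$ with $2\gamma=e_1(t)$, so that $H^*(BT)\cong\Z[t_i,\gamma]/(r_1')$ is a polynomial ring in four degree-$2$ variables. Denoting by $A$ the ring on the right-hand side of the theorem, the proof has three parts: (1) build a ring homomorphism $\phi\colon A\to\mathcal H$; (2) prove $\phi$ surjective; (3) prove $A$ is a free $\Z$-module whose graded rank agrees with that of $\mathcal H$. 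Parts (2) and (3) together force $\phi$ to be an isomorphism.

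For (1): the rule $h\mapsto\bigl(w\mapsto w\cdot h\bigr)$ is a ring homomorphism $H^*(BT)\to\mathcal H$ — the ``second copy'' of $H^*(BT)$ in the Borel picture $BT\times_{BF_4}BT$ — since $w\cdot h-w s_\alpha\cdot h=w\cdot(h-s_\alpha h)$ and $h-s_\alpha h$ is divisible by $\alpha$. Send $t_i$ and $\gamma$ to the corresponding constant maps, set $\tau_j:=$ the image of $t_j$ (so $e_k(\tau)(w)=e_k(w(t_1),\dots,w(t_4))$), and check, reflection by reflection over a generating set of $W(F_4)$, that $e_k(w(t))\equiv e_k(t)\pmod 2$ in $H^*(BT)$ for $k=1,2,3$; then $\gamma_k:=\tfrac12\bigl(e_k(\tau)-e_k(t)\bigr)$ lies in $\mathcal H$. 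For $k=4$ this congruence fails, so one first produces $\omega\in\mathcal H$ as the three-valued class equal to $0$, $e_4(\rho t)$, $-e_4(\rho^2 t)$ on the three sectors into which the triality of the $D_4$-subsystem partitions $W(F_4)$ — it is precisely the demand that this be a GKM class across the walls between sectors that pins down the transformation $\rho$ of \eqref{rho} — and then $e_4(\tau)-e_4(t)-\omega$ is divisible by $2$, giving $\gamma_4$. This defines $\phi$ on generators; one must then verify that each relation maps to $0$. The relations $r_1'$ and $R_i$ hold by construction, $r_{12}$ holds because one of its three factors vanishes at every vertex, and — after substituting $e_k(\tau)=e_k(t)+2\gamma_k$ ($+\omega$ for $k=4$) and cancelling powers of $2$, which is permissible because $\mathcal H$ is $\Z$-torsion-free — the $r_{2i}$ follow from the equalities $h(\tau)=h(t)$ valid for every $W(F_4)$-invariant $h\in H^*(BT)$, together with the triality relation linking the conjugate quartics $e_4(t),e_4(\rho t),e_4(\rho^2 t)$.

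For (2), one shows that the constants $t_i,\gamma$ together with $\tau_j,\gamma_k,\omega$ generate $\mathcal H$ as a ring; equivalently, modulo the ideal generated by $H^*(BT)$ their images span $H^*(F_4/T;\Z)$, for which a degree-by-degree comparison with the Schubert basis suffices, the extra degree-$8$ class $\omega$ being exactly what the images of the $\tau_j$ fail to reach. For the isomorphism, note that $\mathcal H$, being a $\Z$-submodule of $\prod_{W(F_4)}H^*(BT)$, is a free $\Z$-module, and rationally $H^*_T(F_4/T)\cong H^*(BT;\Q)\otimes_{H^*(BF_4;\Q)}H^*(BT;\Q)$, so its Poincaré series is $\prod_{d\in\{4,12,16,24\}}(1-s^d)\big/(1-s^2)^8$. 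The key point for (3) is that the ten relations $r_1',R_1,\dots,R_4,r_2,r_4,r_6,r_8,r_{12}$ form a regular sequence in the polynomial ring on the fourteen generators, and continue to do so after reduction modulo every prime; granting this, $A$ is a free $\Z$-module of graded rank $\prod_{\mathrm{rel}}(1-s^{|r|})\big/\prod_{\mathrm{gen}}(1-s^{|g|})$, which simplifies to exactly the series above. Since a degreewise surjection between free $\Z$-modules of the same finite rank in every degree is an isomorphism, (2) and (3) finish the argument.

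The main obstacle is the regular-sequence/freeness assertion in (3): equivalently, that $\bar A:=A/(t_1,\dots,t_4,\gamma)=\Z[\tau_i,\gamma_i,\omega]/(\bar R_i,\bar r_{2i},\bar r_{12})$ is a free $\Z$-module of rank $|W(F_4)|=1152$, uniformly in the characteristic. It is helpful here that $A$ has the shape $A_0[\tau_1,\dots,\tau_4]/\bigl(e_k(\tau)-c_k\bigr)$ with $A_0=\Z[t_i,\gamma,\gamma_i,\omega]/(r_1',r_2,r_4,r_6,r_8,r_{12})$ and $c_k\in A_0$ — mirroring the fibration of $F_4/T$ over $F_4/H$, with $H$ the maximal-rank reductive subgroup of type $A_3$ and fibre the flag variety of $H$ — so it is enough to show $A_0$ is free over $H^*(BT)$ of rank $|W(F_4)|/|W(A_3)|=48$, uniformly in the characteristic. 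This needs an explicit monomial $\Z$-basis of $A_0$ (equivalently of $A_0/(t_1,\dots,t_4,\gamma)=\Z[\gamma_i,\omega]/(\bar r_{2i},\bar r_{12})$) and a check that it survives reduction modulo $2$ and modulo $3$, the only primes dividing $1152$. All remaining ingredients — the GKM congruences for the constructed classes, the two divisibility-by-$2$ facts, and the derivation of the $r_{2i}$ and $r_{12}$ from $W(F_4)$-invariance and triality — reduce to finite computations with the $F_4$ root system.
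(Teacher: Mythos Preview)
Your overall architecture matches the paper's exactly: construct the GKM functions and verify the relations (Proposition~\ref{relations}), prove they generate $H^*(\F_4)$ (Lemma~\ref{generate}), and show the presented ring is free with the right Poincar\'e series (Lemma~\ref{free}), then conclude by rank-counting. Your construction in part~(1) and the derivation of the relations are essentially the paper's, with one imprecision: the $e_k(t^2)$ are only $W(B_4)$-invariant, not $W(F_4)$-invariant (the $F_4$ invariant degrees are $2,6,8,12$), so what one actually needs, and what the paper checks, is $e_k(\tau^2)-e_k(t^2)=(\text{correction in }\omega)$ on each coset $\rho^\ep W(Spin(9))$, after which one divides by~$4$.

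Where you genuinely diverge is part~(2). The paper's proof of surjectivity (Section~\ref{pf of Lemma generate}) is a self-contained combinatorial induction directly on the GKM graph: it filters each $\rho^\ep W(Spin(9))$ by the subsets $\rho^\ep W\ii$ and sweeps an arbitrary GKM function to zero by subtracting elements of the subring, the key device being that the half-products $\tfrac12\prod_k(\tau_k-\rho^\ep t_{i'_n})$ already lie in that subring (Proposition~\ref{1/2}). Your Nakayama reduction to checking that the images generate $H^*(F_4/T;\Z)$ is valid in principle, but the ``degree-by-degree comparison with the Schubert basis'' you defer to is essentially the content of the Toda--Watanabe presentation, which the paper recovers as Corollary~\ref{F4/T}; invoking it here would be circular, and carrying it out independently is substantial work you have not sketched. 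For part~(3) you and the paper agree it is a regular-sequence check modulo every prime; the paper executes this directly, splitting $p=2$ versus $p\ge 3$ (the latter via the undivided relations $e_k(\tau^2)-e_k(t^2)-\cdots$), while your $A_3$-parabolic factorization $A\cong A_0[\tau_i]/(e_k(\tau)-c_k)$ is a reasonable reorganization but still leaves the mod-$2$ and mod-$3$ basis check to be done.
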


The ordinary integral cohomology ring $H^*(F_4/T)$ was determined by Toda and Watanabe \cite{TW}.
We can obtain the integral cohomology ring of $F_4/T$ as a corollary of Theorem \ref{main} as follows.
There is a fibration sequence
\[
  \xymatrix{
   F_4/T \ar[r] & ET \times_T F_4/T \ar[r]^(0.65){p} & BT.
  }
\]
Since the projection $p \colon ET \times_T F_4/T \to BT$ restricts to $p \circ i \colon ET \times_T (F_4/T)^T \to BT$,
where $i$ is the inclusion $ET \times_T (F_4/T)^T \to ET \times_T F_4/T$,
the induced map $(p \circ i)^* \colon H^*(BT) \to H^*(ET \times_T (F_4/T)^T) = {\rm Map}(W(F_4),H^*(BT))$
sends elements of $H^*(BT)$ to constant functions.
In Theorem \ref{main}, $t_1$, $t_2$, $t_3$, $t_4$, and $\gamma$ correspond to constant functions (see Section \ref{sec. pf. of thm. main}).
Since the cohomology of $F_4/T$ and $BT$ have vanishing odd parts,
the Serre spectral sequence of the fibration $p$ collapses at the $E_2$-term.
Hence $H^*(F_4/T) \cong H^*_T(F_4/T)/(t_1, t_2, t_3, t_4, \gamma)$.

\begin{cor}[{\cite[Theorem A]{TW}}]
\label{F4/T}
The integral cohomology ring of $F_4/T$ is given as:
\[
H^*(F_4/T) \cong \Z[\tau_i, \gamma_1, \gamma_3, \omega \suchthat1 \leq i \leq 4]/(\overline{r}_1, \overline{r}_2, \overline{r}_3, \overline{r}_4, \overline{r}_6, \overline{r}_8, \overline{r}_{12}),
\]
where
\begin{align*}
&\overline{r}_1 = 2\gamma_1 - e_1(\tau), &&\overline{r}_2 = 2\gamma_1^2 - e_2(\tau),\\
&\overline{r}_3 = 2\gamma_3 - e_3(\tau), &&\overline{r}_4 = e_4(\tau) -2\gamma_1e_3(\tau) +2\gamma_1^4 -3\omega,\\
&\overline{r}_6 = -\gamma_1^2e_4(\tau) +\gamma_3^2, &&\overline{r}_8 = 3e_4(\tau)\gamma_1^4 -\gamma_1^8 +3\omega(\omega+e_3(\tau)\gamma_1),\\
&\overline{r}_{12} = \omega^3.
\end{align*}
\end{cor}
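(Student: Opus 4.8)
Given the isomorphism $H^*(F_4/T)\cong H^*_T(F_4/T)/(t_1,t_2,t_3,t_4,\gamma)$ established in the paragraph preceding the statement, the corollary is a purely formal consequence of Theorem~\ref{main}: it suffices to rewrite the presentation given there after adjoining $t_1,\dots,t_4,\gamma$ to the ideal, i.e.\ after setting $t_1=\cdots=t_4=\gamma=0$. So the plan is to carry out this specialization and then clean up the resulting presentation by eliminating the generators that become redundant.

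First I would record what the generators of the ideal become. Since $\rho$ is linear, $\rho t$ and $\rho^2 t$ vanish together with $t$, so $e_j(t)=e_j(\rho t)=e_j(\rho^2 t)=0$ for $j\ge 1$ while $e_0(t)=1$. Hence $r_1'$ becomes $0$ and drops out; $R_i$ becomes $e_i(\tau)-2\gamma_i$ for $i=1,2,3$ and $R_4$ becomes $e_4(\tau)-2\gamma_4-\omega$; $r_{12}$ becomes $\omega^3$; and each $r_{2i}$ becomes a polynomial in $\gamma_1,\dots,\gamma_4,\omega$. The key observation is that the $e_0(t)=1$ summand contributes to $r_2$ (resp.\ $r_4$) the term $\gamma_2$ (resp.\ $\gamma_4$) with unit coefficient: explicitly $r_2\mapsto\gamma_2-\gamma_1^2$ and $r_4\mapsto\gamma_4-\omega-2\gamma_1\gamma_3+\gamma_2^2$, while $r_6\mapsto 2\gamma_2\gamma_4-\gamma_3^2+\gamma_2\omega$ and $r_8\mapsto\gamma_4^2+\gamma_4\omega+\omega^2$. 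Thus, after discarding $t_1,\dots,t_4,\gamma$, I can use the image of $r_2$ to eliminate $\gamma_2=\gamma_1^2$, and then the image of $r_4$ (in which $\gamma_2$ has already been replaced by $\gamma_1^2$) to eliminate $\gamma_4=\omega+2\gamma_1\gamma_3-\gamma_1^4$, landing in $\Z[\tau_i,\gamma_1,\gamma_3,\omega]$ modulo the seven relations coming from $R_1,R_2,R_3,R_4,r_6,r_8,r_{12}$.

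What remains is to identify these seven relations with $\overline{r}_1,\dots,\overline{r}_{12}$. The images of $R_1,R_2,R_3$ are, up to sign, exactly $\overline{r}_1=2\gamma_1-e_1(\tau)$, $\overline{r}_2=2\gamma_1^2-e_2(\tau)$, $\overline{r}_3=2\gamma_3-e_3(\tau)$, and $r_{12}\mapsto\omega^3=\overline{r}_{12}$. For the remaining three I would show that, modulo the ideal generated by the first three and modulo one another, the images of $R_4$, $r_6$, $r_8$ coincide with $\overline{r}_4$, $\overline{r}_6$, $\overline{r}_8$: the image of $R_4$ differs from $\overline{r}_4$ by the multiple $-2\gamma_1(2\gamma_3-e_3(\tau))=-2\gamma_1\overline{r}_3$; the image of $r_6$ differs from $-\overline{r}_6$ by $-\gamma_1^2$ times the image of $R_4$; and the image of $r_8$, after repeatedly applying the consequences $2\gamma_3=e_3(\tau)$, $3\omega=e_4(\tau)-2\gamma_1e_3(\tau)+2\gamma_1^4$ and $\gamma_3^2=\gamma_1^2e_4(\tau)$ of $\overline{r}_3,\overline{r}_4,\overline{r}_6$, reduces exactly to $\overline{r}_8$. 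All of these moves are invertible over $\Z$, so the two ideals agree and the presentation in the corollary follows. I expect the $r_8$ reduction to be the only delicate point: it is a degree-$16$ identity that must be pushed through three lower-degree relations in the right order, and it is where an arithmetic slip would most easily creep in; everything else is short polynomial bookkeeping, and the topological input (collapse of the Serre spectral sequence and the fact that $t_i$ and $\gamma$ map to constant functions) has already been supplied.
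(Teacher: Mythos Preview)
Your proposal is correct and follows essentially the same route as the paper's proof: set $t_i=\gamma=0$, read off the reduced relations (the paper calls them $Q_i$ and $q_{2i}$), eliminate $\gamma_2$ and $\gamma_4$ via $q_2$ and $q_4$, and then check that $Q_1,Q_2,Q_3,Q_4,q_6,q_8,q_{12}$ generate the same ideal as $\overline{r}_1,\dots,\overline{r}_{12}$ by exactly the congruences you list. The only place where the paper is more explicit than you is the $q_8\equiv\overline{r}_8$ step, which it writes out line by line; your description of that reduction (use $\overline{r}_3$, $\overline{r}_4$, $\overline{r}_6$ in turn) is the same computation.
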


Corollary \ref{F4/T} will be proved in Section \ref{pf of Cor F4/T}.
Throughout this paper, all cohomology groups and rings will be taken with integer coefficient.

\section{\textsc{GKM graph and its cohomology}}

Let $G$ be a compact connected Lie group and let $T$ be its maximal torus.
Specializing and abstracting the work of Goresky, Kottwitz, and MacPherson \cite{GKM},
Guillemin and Zara \cite{GZ} introduced a certain graph to each of whose edge an element of $H^2(BT)$ is given
and showed the $T$-equivariant cohomology of $G/T$ with complex coefficient is recovered from this graph.
Let us introduce this special graph.
Recall that there is a natural identification
\[
\Hom(T,S^1) \cong H^2(BT),
\]
 where the left hand side is the set of weights of $G$.
Let $W(G)$ and $\Phi(G)$ denote the Weyl group and the root system of $G$, respectively.
Since every root is a weight, we regard $\Phi(G) \subset H^2(BT)$.
There is a canonical action of the Weyl group $W(G)$ on $\Hom(T,S^1)$ and it restricts to $\Phi(G)$.
We denote this action as $w\alpha$ for $w \in W(G)$ and $\alpha \in H^2(BT)$.
Recall that to each $\alpha \in \Phi(G)$, one can assign a reflection $\sigma_\alpha$ which is an element of the Weyl group $W(G)$.

\begin{dfn}
The GKM graph of $G/T$ is the Cayley graph of $W(G)$ with respect to a generating set $\{\sigma_\alpha \in W(G)\suchthat\alpha \in \Phi(G)\}$
which is equipped with the cohomology classes $\pm w\alpha \in H^2(BT)$ to the edge $ww'$ satisfying $w'=w\sigma_\alpha$.
We call $\pm w\alpha$ the label of the edge $ww'$.
\end{dfn}
The ambiguity of the sign of the label $\pm w \alpha$ occurs from the equation $w' \alpha = w \sigma_\alpha \alpha = -w \alpha$.
Let us introduce the cohomology of the GKM graph.
Consider a function $f : W(G) \to H^*(BT)$ between sets.
We say that $f$ satisfies the GKM condition or $f$ is a GKM function if for any $w \in W(G)$ and $\alpha \in \Phi(G)$,
\[
f(w) - f(w\sigma_\alpha) \in (w\alpha) \subset H^*(BT),
\]
where $(x_1, \ldots, x_n)$ means the ideal generated by $x_1, \ldots, x_n$.
It is easy to see that all GKM functions form a subring of $\prod_{W(G)} H^*(BT)$,
where we identify the set of all functions $W(G) \to H^*(BT)$ with $\prod_{W(G)} H^*(BT)$.
Since the GKM graph of $G/T$ has $W(G)$ as its vertex set, a GKM function assigns an element of $H^*(BT)$ to each vertex of the GKM graph.

\begin{dfn}
Let $\mathcal{G}$ be the GKM graph of $G/T$.
The cohomology ring $H^*(\mathcal{G})$ is defined as the subring of $\prod_{W(G)} H^*(BT)$
consisting of all GKM functions.
\end{dfn}

Guillemin and Zara \cite[Theorem 1.7.3]{GZ} restated an important theorem of the GKM theory as
\[
H^*_T(G/T; \C) \cong H^*(\mathcal{G}) \otimes \C.
\]
Harada, Henriques, and Holm refined this result to the integral cohomology.
More precisely, we have:
\begin{thm}[{\cite[Theorem 3.1 and Lemma 5.2]{HHH}}]
\label{HHH}
Suppose the Lie group $G$ is simple and let $\mathcal{G}$ be the GKM graph of $G/T$.
If $G$ is not of type $C$, then there is an isomorphism
\[
H^*_T(G/T) \cong H^*(\mathcal{G}).
\]
\end{thm}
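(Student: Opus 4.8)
The plan is to prove two statements, which together give the isomorphism: that $i^*$ is injective over $\Z$, and that its image is exactly $H^*(\mathcal{G})$. Injectivity is the easy half. The Bruhat decomposition gives $G/T$ a $CW$-structure with cells only in even dimensions, so $H^*(G/T)$ is a free abelian group concentrated in even degrees; hence the Serre spectral sequence of the bundle $G/T\to ET\times_T G/T\xrightarrow{p}BT$ collapses at $E_2$ and $H^*_T(G/T)$ is a free $H^*(BT)$-module. In particular $H^*_T(G/T)$ is $\Z$-torsion-free, so $H^*_T(G/T)\to H^*_T(G/T)\otimes\Q$ is injective; since $i^*\otimes\Q$ is injective by the localization theorem and $H^*_T((G/T)^T)=\operatorname{Map}(W(G),H^*(BT))$ is itself torsion-free, $i^*$ is injective.

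One inclusion, $\operatorname{im}(i^*)\subseteq H^*(\mathcal{G})$, is formal. For $w\in W(G)$ and $\alpha\in\Phi(G)$ the fixed points $w$ and $w\sigma_\alpha$ lie on a common $T$-invariant $2$-sphere, namely the orbit of $wT$ under the rank-one subgroup of $G$ attached to $\alpha$, on which $T$ acts through $\pm w\alpha$. Restricting any $\xi\in H^*_T(G/T)$ to this sphere and then to its two fixed points, and using that $H^*_T(S^2)\to H^*_T((S^2)^T)$ has image the pairs congruent modulo the tangent weight, gives $i^*(\xi)(w)-i^*(\xi)(w\sigma_\alpha)\in(w\alpha)$. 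Thus every element of $\operatorname{im}(i^*)$ is a GKM function.

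The reverse inclusion $H^*(\mathcal{G})\subseteq\operatorname{im}(i^*)$ is the substance of the theorem, and the only place where the hypothesis that $G$ is not of type $C$ is used. I would filter $G/T$ by the $T$-stable closed subvarieties $X_0\subset X_1\subset\cdots$, where $X_k$ is the union of the Schubert varieties of complex dimension at most $k$, so that each successive quotient $X_k/X_{k-1}$ is a wedge, indexed by the $w\in W(G)$ with $\ell(w)=k$, of Thom spaces of the $T$-equivariant tangent cells at the corresponding fixed points, whose $T$-weights lie among the $\{w\alpha\}$. Running the long exact sequences of the pairs $(X_k,X_{k-1})$ together with the equivariant Thom isomorphism, one constructs inductively on $k$ equivariant Schubert classes $S_w\in H^*_T(G/T)$ with $i^*(S_w)(v)=0$ whenever $\ell(v)<\ell(w)$ and with $i^*(S_w)(w)$ equal to the equivariant Euler class of the tangent cell at $w$, a product of roots $w\alpha$; the resulting lower-triangularity then shows, via a divisibility argument at each step, that $\{i^*(S_w)\}_{w\in W(G)}$ is an $H^*(BT)$-module basis of $H^*(\mathcal{G})$, which is the surjectivity. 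The main obstacle is that, with $\Z$-coefficients, each inductive step requires the connecting homomorphisms to have precisely the expected torsion-free cokernel; this comes down to a local condition on the $T$-action near the fixed points, which is exactly the hypothesis of \cite[Theorem 3.1]{HHH} and is verified for flag varieties of every simple type other than $C$ in \cite[Lemma 5.2]{HHH}. Type $C$ must be excluded because the presence of the non-primitive long roots $2e_i$ violates this local condition, producing $2$-torsion where torsion-freeness is needed. Combining injectivity with this surjectivity gives $H^*_T(G/T)\cong H^*(\mathcal{G})$.
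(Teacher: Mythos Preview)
The paper does not supply its own proof of this statement; Theorem~\ref{HHH} is quoted directly from \cite[Theorem~3.1 and Lemma~5.2]{HHH} and used as a black box throughout, so there is nothing in the paper to compare your argument against.

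That said, your outline is a faithful sketch of the argument actually given in \cite{HHH}. The injectivity of $i^*$ via torsion-freeness of $H^*_T(G/T)$ and the localization theorem, and the easy inclusion $\operatorname{im}(i^*)\subset H^*(\mathcal{G})$ via the invariant $2$-spheres, are standard and exactly as you describe. For the hard inclusion, \cite{HHH} work in the generality of a $T$-space equipped with a suitable invariant filtration; the ``local condition'' you allude to is made precise there as a pairwise coprimality condition on the isotropy weights at each fixed point in the polynomial ring $H^*(BT)$, which is what lets the divisibility argument (subtracting multiples of Schubert classes in order of length) go through over $\Z$ rather than merely over $\Q$. Their Lemma~5.2 verifies this condition for the root systems of all simple $G$ except type~$C$, where the long root $2e_i$ is twice the weight $e_i\in H^2(BT)$ and the argument breaks. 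Your diagnosis of where and why the hypothesis enters is correct; what remains if you want a self-contained proof is to flesh out the triangular divisibility step and check carefully that coprimality of the tangent weights is exactly what is needed.
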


\section{\textsc{The GKM graph of $F_4/T$}}
\label{sec. GKM graph}

In this section we describe and analyze the GKM graph of $F_4/T$.
First of all let us choose a maximal torus of $F_4$.
Let $T^4$ be the standard maximal torus of $SO(9)$
and let $\overline{t}_1$, $\overline{t}_2$, $\overline{t}_3$, $\overline{t}_4 \in H^2(BT^4)$ be the canonical basis.
For the universal covering $\mu:Spin(9) \to SO(9)$ let $T = \mu^{-1}(T^4)$.
Then $T$ is a maximal torus of $Spin(9)$.
Since $Spin(9)$ is a Lie subgroup of $F_4$ (cf. \cite[Chapter 8,9,14]{A}), $T$ is also a maximal torus of $F_4$.
We fix a maximal torus of $F_4$ to $T$.
Let $t_i$ denote $\mu^*(\overline{t}_i) \in H^2(BT)$.
By definition we have
\[
H^*(BT) = \Z[t_1, t_2, t_3, t_4, \gamma]/(2\gamma - e_1(t)).
\]

To describe the Weyl group $W(F_4)$ we start with the root system of $F_4$.
The root system $\Phi(F_4)$ is given as:
\[
\Phi(F_4)=\{\pm(t_i+t_j), \pm(t_i-t_j), \pm t_k, \frac{1}{2}(\pm t_1 \pm t_2 \pm t_3 \pm t_4) \suchthat 1\leq i < j \leq 4,\ 1 \leq k \leq 4\}
\]
The roots $\pm(t_i+t_j)$ and $\pm(t_i-t_j)$ are called long roots,
and $\pm t_k$ and $\frac{1}{2}(\pm t_1 \pm t_2 \pm t_3 \pm t_4)$ are called short roots.
Put
\begin{align*}
&\alpha_1 = t_2 -t_3,	&&\alpha_2 = t_3 -t_4,\\
&\alpha_3 = t_4,		&&\alpha_4 = \frac{1}{2}(t_1 -t_2 -t_3 -t_4).\\
\end{align*}
Then the Dynkin diagram of $F_4$ is as:
\[
    \begin{xy}
    ( -16, 0)="1"*+[Fo:<2mm>]{},
    (   0, 0)="2"*+[Fo:<2mm>]{},
    (  16, 0)="3"*+[Fo:<2mm>]{},
    (  32, 0)="4"*+[Fo:<2mm>]{},
    (   9, 0)="5"*{},
    (   7, 2)="6"*{},
    (   7,-2)="7"*{},
    ( -16,-6.4)="11"*{\alpha_1},
    (   0,-6.4)="22"*{\alpha_2},
    (  16,-6.4)="33"*{\alpha_3},
    (  32,-6.4)="44"*{\alpha_4},
    \ar@{-} "1"+/r2mm/;"2"+/l2mm/,
    \ar@{-} "2"+/r1.9mm/+/u0.5mm/;"3"+/l1.9mm/+/u0.5mm/,
    \ar@{-} "2"+/r1.9mm/+/d0.5mm/;"3"+/l1.9mm/+/d0.5mm/,
    \ar@{-} "3"+/r2mm/;"4"+/l2mm/,
    \ar@{-} "5";"6",
    \ar@{-} "5";"7",
    \end{xy}
\]
Then $W(F_4)$ is generated by the reflections $\sigma_{\alpha_i}$ for $i = 1,2,3,4$.
Since $Spin(8)$ is a Lie subgroup of $F_4$,
the root system of $Spin(8)$ is contained in $\Phi(F_4)$,
which is given as:
\[
\Phi(Spin(8))=\{\pm(t_i+t_j), \pm(t_i-t_j)\suchthat 1\leq i < j \leq 4\}
\]
It consists of all the long roots of the root system $\Phi(F_4)$.
Then the Weyl group $W(Spin(8))$ is generated by the reflections associated with the long roots,
and $W(Spin(8))$ is a subgroup of $W(F_4)$.

Put $W=W(Spin(8))$.
The vertex set $W(F_4)$ of the GKM graph of $F_4/T$ is decomposed into $6$ cosets by the next theorem.
\begin{thm}[{\cite [Theorem 14.2]{A}}]
The Weyl group $W$ of $Spin(8)$ is a normal subgroup of $W(F_4)$ and there is an isomorphism $W(F_4)/W \cong \S_3$,
where $\S_n$ is the symmetric group on $n$-letters.
Moreover $W(F_4)/W$ permutes the three root pairs
\begin{equation}
\label{rootpairs}
\pm \frac{1}{2}(t_1 +t_2 +t_3 -t_4), \quad \pm \frac{1}{2}(t_1 +t_2 +t_3 +t_4), \quad \pm t_4.
\end{equation}
\end{thm}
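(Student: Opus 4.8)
The plan is to let the quotient $W(F_4)/W$ act on an explicit three-element set built from the short roots of $F_4$. Since $W(F_4)$ preserves $\Phi(F_4)$ and root lengths, it permutes the $24$ short roots. I would partition these into three $W$-stable blocks of size $8$:
\[
S_A=\{\pm t_k\mid 1\le k\le 4\},\qquad
S_{\pm}=\bigl\{\tfrac12(\epsilon_1 t_1+\epsilon_2 t_2+\epsilon_3 t_3+\epsilon_4 t_4)\mid \epsilon_i\in\{\pm1\},\ \textstyle\prod_i\epsilon_i=\pm1\bigr\},
\]
so that the three root pairs in \eqref{rootpairs} are, in order, one pair from $S_-$, one from $S_+$, and one from $S_A$. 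Recalling that $W=W(Spin(8))=W(D_4)$ is the group of signed coordinate permutations $t_i\mapsto\pm t_{\pi(i)}$ using an even number of sign changes, one checks directly that each generating reflection of $W$ (a transposition $t_i\leftrightarrow t_j$, or the map $t_i\mapsto-t_j,\ t_j\mapsto -t_i$) fixes $S_A$ and preserves the product $\prod_i\epsilon_i$ attached to a halved root; hence $W$ fixes each of $S_A$, $S_+$, $S_-$.

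Next I would compute the action of the simple reflections $\sigma_{\alpha_1},\dots,\sigma_{\alpha_4}$ on $\{S_A,S_+,S_-\}$. The first two lie in $W$ and act trivially. The reflection $\sigma_{\alpha_3}=\sigma_{t_4}$ negates $t_4$, so it fixes $S_A$ and, since flipping one sign reverses $\prod_i\epsilon_i$, it interchanges $S_+$ and $S_-$. For $\alpha_4=\frac12(t_1-t_2-t_3-t_4)$ a direct computation gives $\sigma_{\alpha_4}(t_1)=\frac12(t_1+t_2+t_3+t_4)$ and, more generally, $\sigma_{\alpha_4}$ maps $S_A$ bijectively onto $S_+$; it therefore interchanges $S_A$ and $S_+$ and fixes $S_-$. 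Since $W(F_4)$ is generated by $\sigma_{\alpha_1},\dots,\sigma_{\alpha_4}$, it permutes the three blocks, yielding a homomorphism $\phi\colon W(F_4)\to\S_3$ whose image contains the transpositions $(S_+\,S_-)$ and $(S_A\,S_+)$, and is therefore all of $\S_3$.

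Finally I would show $\ker\phi=W$. One inclusion is the block-stability established above. Conversely, if $w\in\ker\phi$ then $w(S_A)=S_A$ forces $w$ to be a signed coordinate permutation, and then $w(S_+)=S_+$, tested on the halved root $\frac12(t_1+t_2+t_3+t_4)$, forces $w$ to use an even number of sign changes, i.e. $w\in W(D_4)=W$; alternatively, once $\phi$ is known to be surjective, $\ker\phi=W$ follows from $|W(F_4)|/|W|=1152/192=6$. Hence $W=\ker\phi$ is normal in $W(F_4)$, $\phi$ descends to an isomorphism $W(F_4)/W\cong\S_3$, and the induced $\S_3$-action permutes the blocks $S_A,S_+,S_-$, hence the three representing root pairs \eqref{rootpairs}. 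One can also argue more conceptually that $W(F_4)=\mathrm{Aut}(\Phi(D_4))\cong W(D_4)\rtimes\S_3$, the $\S_3$ being triality, using $|\mathrm{Aut}(\Phi(D_4))|=1152=|W(F_4)|$; but the short-root picture makes the $\S_3$-action on \eqref{rootpairs} explicit.

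All the calculations here are elementary orbit bookkeeping; the one point that needs care is identifying the correct $W$-stable partition of the short roots — the combinatorial form of $D_4$-triality — together with the verification that no subgroup strictly between $W$ and $W(F_4)$ stabilizes it, which is the parity argument for the halved roots. That is the step I would expect to carry the real content.
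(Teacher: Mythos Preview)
Your argument is correct, but there is nothing in the paper to compare it against: this theorem is not proved in the paper. It is quoted verbatim from Adams~\cite[Theorem 14.2]{A} and used as a black box; no proof environment follows the statement. What the paper does do, immediately afterward, is exhibit explicit coset representatives: it defines $\rho$ by the word \eqref{rho}, computes $\rho t_i$ directly to see that $\rho$ cyclically permutes the three root pairs \eqref{rootpairs}, and observes that $\kappa=\sigma_{t_4}$ swaps two of them, so that $\rho,\kappa$ generate a copy of $\S_3$ mapping onto $W(F_4)/W$. This is a verification of the quotient structure given the cited theorem, not an independent proof.

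Your approach supplies exactly what the paper omits. The partition of the $24$ short roots into the three $W(D_4)$-orbits $S_A,S_+,S_-$ is the clean way to see the $\S_3$; your computation that $\sigma_{\alpha_3}$ swaps $S_\pm$ while $\sigma_{\alpha_4}$ swaps $S_A$ and $S_+$ is correct (indeed $\sigma_{\alpha_4}$ fixes every element of $S_-$ orthogonal to $\alpha_4$ and negates $\alpha_4$ itself), and the kernel identification via the parity of sign changes---or, equivalently, the order count $1152/192=6$---closes the argument. The paper's later choice of $\rho$ and $\kappa$ amounts to picking specific preimages of a $3$-cycle and a transposition under your homomorphism $\phi$, so the two discussions are entirely compatible; yours simply proves the result the paper takes on faith.
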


Let us describe the representatives of $W(F_4)/W$.
First we define an element $\rho$ of $W(F_4)$ as
\begin{equation}
\label{rho}
\rho = \sigma_{\alpha_3} \sigma_{\alpha_2} \sigma_{\alpha_1} \sigma_{\alpha_0} \sigma_{\alpha_3} \sigma_{\alpha_2} \sigma_{\alpha_1} \sigma_{\alpha_3} \sigma_{\alpha_2} \sigma_{\alpha_4},
\end{equation}
where $\alpha_0$ denotes the root $t_1-t_2$ of $Spin(8)$.
By a straightforward calculation,
we have
\begin{equation}
\label{rho t_i}
\rho t_i=
\begin{cases}
-\gamma +t_i	&(i=1,2,3)\\
\gamma -t_4		&(i=4),
\end{cases}
\end{equation}
\[
\rho^2 t_i=
\begin{cases}
-\gamma +t_4 +t_i	&(i=1,2,3)\\
-\gamma		&(i=4),
\end{cases}
\]
and
\[
\rho^3 = {\rm id}.
\]
By the above equations the root system $\Phi(F_4)$ can be rewritten as:
\[
\Phi(F_4)=\{\pm(t_i+t_j), \pm(t_i-t_j), \pm \re t_k \suchthat 1\leq i < j \leq 4,\ 1 \leq k \leq 4,\ 0\leq \ep \leq 2\}
\]
Note that $\rho$ permutes the three root pairs \eqref{rootpairs} cyclically
and $\kappa = \sigma_{t_4}$ interchanges $\pm \frac{1}{2}(t_1 +t_2 +t_3 -t_4) = \pm \rho t_4$ and $\pm \frac{1}{2}(t_1 +t_2 +t_3 +t_4) = \pm \rho^2 t_4$.
Hence $W(F_4)/W \cong \S_3$ is generated by $\rho$ and $\kappa$.
Since the equation
\begin{equation}
\label{kr=r^2k}
\kappa \rho = \rho^2 \kappa
\end{equation}
holds,
we have a coset decomposition
\[
W(F_4) = \coprod_{\substack{\ep=0,1,2 \\ \delta=0,1}} \re \kappa^\delta W.
\]

We will describe the GKM graph $\mathcal{F}_4$ of $F_4/T$.
There are $24 (= \# \Phi (F_4)/2)$ edges out of each vertex of $\mathcal{F}_4$.
The half of these edges correspond to the long roots $\pm (t_i \pm t_j)$
and the other half correspond to the short roots $\pm \re t_i$.

The subgraph induced by $W$ is the GKM graph $\mathcal{G}$ of $Spin(8)/T$
and it is well understood in \cite{FIM}.
Let $\re \kappa^\delta \mathcal{G}$ be the GKM subgraph induced by $\re \kappa^\delta W$ for $\ep=0,1,2$ and $\delta=0,1$.
For any $\ep$ and $\delta$ the induced subgraph $\re \kappa^\delta \mathcal{G}$ is isomorphic to $\mathcal{G}$ as graphs.
Indeed if an edge $ww'$ in $\mathcal{G}$ satisfies $w'=w\sigma_\alpha$ for a root $\alpha$ of $Spin(8)$,
then $\re \kappa^\delta w$ and $\re \kappa^\delta w'$ satisfy $\re \kappa^\delta w' = \re \kappa^\delta w \sigma_\alpha$,
and vice versa.
Moreover, labels of edges of $\re \kappa^\delta \mathcal{G}$ are also determined by $\mathcal{G}$ as follows.
When an edge $ww'$ has a root $\pm \beta$ as its label,
the label of the edge connecting $\re \kappa^\delta w$ and $\re \kappa^\delta w'$ is $\pm \re \kappa^\delta \beta$.
Remark that if an edge $ww'$ in $\re \kappa^\delta \mathcal{G}$ satisfies $w'=w\sigma_\alpha$, then $\alpha$ is one of the long roots.

From the above argument, it is sufficient to consider the edges connecting two of $\re \kappa^\delta \mathcal{G}$'s,
which correspond to the short roots.
Easy calculations show that
\[
\sigma_{t_4} 	  = \kappa,\quad
\sigma_{\rho t_4}   = \rho^2 \kappa,\quad
\sigma_{\rho^2 t_4} = \rho \kappa.
\]
Then the GKM graph $\mathcal{F}_4$ has an induced subgraph below,
where $e$ denotes the unit element of $W(F_4)$ and an element of $W(F_4)$ in each circle denotes a vertex of $\mathcal{F}_4$.
The labels are calculated later.
\begin{equation}
\label{subgraph}
    \begin{xy}
    (  0  , 30)="+" *+[Fo:<13pt>]{e},
    ( 37.5, 15)="1-"*+[Fo:<13pt>]{\rho \kappa},
    ( 37.5,-15)="2+"*+[Fo:<13pt>]{\rho^2},
    (  0  ,-30)="-" *+[Fo:<13pt>]{\kappa},
    (-37.5,-15)="1+"*+[Fo:<13pt>]{\rho},
    (-37.5, 15)="2-"*+[Fo:<13pt>]{\rho^2 \kappa},
    ( -3  ,-15)="a" *{\sigma_{t_4}},
    ( 33.6,  0)="a''"*{\sigma_{\rho t_4}},
    (-33.0,  0)="a'"*{\sigma_{\rho^2 t_4}},
    (-16.5, 19.5)="b" *{\sigma_{\rho t_4}},
    ( 16.5,-19.5)="b''"*{\sigma_{\rho^2 t_4}},
    ( 20  , 5)="b'"*{\sigma_{t_4}},
    (-16.5,-19.5)="c" *{\sigma_{\rho t_4}},
    (-16.5, 10)="c''"*{\sigma_{t_4}},
    ( 16.5, 19.5)="c'"*{\sigma_{\rho^2 t_4}},
    \ar@{-} "+"+/d5mm/;"-"+/u5mm/,
    \ar@{-} "1+"+/u2mm/+/r5mm/;"1-"+/d2mm/+/l5mm/,
    \ar@{-} "2+"+/u2mm/+/l5mm/;"2-"+/d2mm/+/r5mm/,
    \ar@{-} "+"+/d2mm/+/r5mm/;"1-"+/u2mm/+/l5mm/,
    \ar@{-} "1-"+/d5mm/;"2+"+/u5mm/,
    \ar@{-} "2+"+/d2mm/+/l5mm/;"-"+/u2mm/+/r5mm/,
    \ar@{-} "-"+/u2mm/+/l5mm/;"1+"+/d2mm/+/r5mm/,
    \ar@{-} "1+"+/u5mm/;"2-"+/d5mm/,
    \ar@{-} "2-"+/u2mm/+/r5mm/;"+"+/d2mm/+/l5mm/,
    \end{xy}
\end{equation}
We will calculate the reflection $\sigma_\alpha$ for a short root $\alpha$ to describe $\mathcal{F}_4$.
For example let us consider the short root $\rho t_1$ and the reflection $\sigma_{\rho t_1}$.
By \eqref{rho t_i} we have $\rho t_1 = \frac{1}{2}(t_1 -t_2 -t_3 -t_4) = \sigma_{t_2}\sigma_{t_3} (\rho t_4)$.
Then $\sigma_{\rho t_1} = \sigma_{t_2}\sigma_{t_3}\sigma_{\rho t_4}\sigma_{t_3}\sigma_{t_2}$ and $\sigma_{t_2}\sigma_{t_3} \in W$.
Since $W$ is a normal subgroup of $W(F_4)$,
we have $W \cdot \rho^2 \kappa W = \rho^2 \kappa W$ in $W(F_4)/W$.
Hence $\sigma_{\rho t_1}$ is also contained in $\rho^2 \kappa W$.
For any $i$, it is shown similarly that
\[
\sigma_{\rho t_i} \in \rho^2 \kappa W, \quad \sigma_{\rho^2 t_i} \in \rho \kappa W
\]
and obviously we have
\[
\sigma_{t_i} \in \kappa W.
\]
Hence, for any $0 \leq \ep,\ep' \leq 2$ and $\delta=0,1$,
it is independent from the choice of $i$ and $w \in \rho^{\epsilon'} \kappa^\delta W$ which coset contains $w \sigma_{\re t_i}$.

Let us calculate the label of the edge connecting the vertices $\kappa$ and $\rho$ in the GKM subgraph \eqref{subgraph},
which corresponds to a short root $\rho t_4$.
The label of the edge turns out to be $\pm \kappa(\rho t_4)$.
It follows from the relation \eqref{kr=r^2k} that
\[
\pm \kappa (\rho t_4) = \pm \rho^2 \kappa t_4 = \pm \rho^2 t_4.
\]
One can make similar calculations of the labels of other edges in the GKM subgraph \eqref{subgraph}.
For any $w \in W$, $w$ fixes three sets of short roots $\{\pm t_i\}_{i=1}^4$, $\{\pm \rho t_i\}_{i=1}^4$ and $\{\pm \rho^2 t_i\}_{i=1}^4$
since $w$ permutes $t_i$'s and changes the signs of even number of $t_i$'s.
Hence the label $\pm \re \kappa^\delta w (\alpha)$ is calculated similarly for any short root $\alpha$.

We can now describe a schematic diagram of $\mathcal{F}_4$ as below.
\begin{equation}
\label{sch. diag. F_4}
    \begin{xy}
    (  0  , 30)="+" *{\mathcal{G}},
    ( 37.5, 15)="1-"*{\rho \kappa \mathcal{G}},
    ( 37.5,-15)="2+"*{\rho^2 \mathcal{G}},
    (  0  ,-30)="-" *{\kappa \mathcal{G}},
    (-37.5,-15)="1+"*{\rho \mathcal{G}},
    (-37.5, 15)="2-"*{\rho^2 \kappa \mathcal{G}},
    (  0  ,-15)="+-" *{(t_i,\sigma_{t_j})},
    ( 18.75,  7.5)="1+1-"*{(\rho t_i,\sigma_{t_j})},
    (-18.75,  7.5)="2+2-"*{(\rho^2 t_i,\sigma_{t_j})},
    (-24, 26)="+2-" *{(\rho t_i,\sigma_{\rho t_j})},
    (-24,-26)="-1+" *{(\rho^2 t_i,\sigma_{\rho t_j})},
    ( 45,  0)="1-2+"*{(t_i,\sigma_{\rho t_j})},
    (-45,  0)="1+2-"*{(t_i,\sigma_{\rho^2 t_j})},
    ( 24,-26)="-2+"*{(\rho t_i,\sigma_{\rho^2 t_j})},
    ( 24, 26)="+1-"*{(\rho^2 t_i,\sigma_{\rho^2 t_j})},
    \ar@{-} "+"+/d3mm/;"+-"+/u2mm/,					\ar@{-}"+-"+/d2mm/;"-"+/u4mm/,
    \ar@{-} "1+"+/u2mm/+/r5mm/;"1+1-"+/d2.2mm/+/l5.5mm/,	\ar@{-} "1+1-"+/u2.2mm/+/r5.5mm/;"1-"+/d2mm/+/l5mm/,
    \ar@{-} "2+"+/u2mm/+/l5mm/;"2+2-"+/d2.2mm/+/r5.5mm/,	\ar@{-} "2+2-"+/u2.2mm/+/l5.5mm/;"2-"+/d2mm/+/r5.mm/,
    \ar@{-} "+"+/d2mm/+/r5mm/;"1-"+/u2mm/+/l5mm/,
    \ar@{-} "1-"+/d3mm/;"2+"+/u4mm/,
    \ar@{-} "2+"+/d2mm/+/l5mm/;"-"+/u2mm/+/r5mm/,
    \ar@{-} "-"+/u2mm/+/l5mm/;"1+"+/d2mm/+/r5mm/,
    \ar@{-} "1+"+/u4mm/;"2-"+/d3mm/,
    \ar@{-} "2-"+/u2mm/+/r5mm/;"+"+/d2mm/+/l5mm/,
    \end{xy}
\end{equation}
This diagram means the followings.
For example, $\mathcal{G}$ and $\rho \mathcal{G}$ are not adjacent in this diagram.
It means that for any vertices $w \in W$ and $w' \in \rho W$,
they are not adjacent.
On the other hand, $\rho \mathcal{G}$ and $\rho \kappa \mathcal{G}$ are adjacent in this diagram,
and a pair $(\rho t_i,\sigma_{t_j})$ is assigned to the edge.
The first entry $\rho t_i$ is a root and the second entry $\sigma_{t_j}$ is a reflection.
If two vertices $w \in \rho W$ and $w' \in \rho \kappa W$ are adjacent in $\mathcal{F}_4$,
then they satisfy $w'=w\sigma_{t_j}$ for some $j$,
and the edge $ww'$ is labeled by $\rho t_i$ for some $i$.
The label $\pm \rho t_i$ equals to $\pm w t_j$.
Especially each vertex of $\rho \mathcal{G}$ is connected to $4$ vertices of $\rho \kappa \mathcal{G}$
by the edges correspond to the short roots $t_j$'s $(1 \leq j \leq 4)$, and vice versa.
The labels of these edges are $\pm \rho t_i$'s $(1 \leq i \leq 4)$.
Every $\rho t_i$'s appear as the labels of the edges out of each vertex of $\rho \mathcal{G}$.
The situation is the same for any connected two subgraphs in the schematic diagram \eqref{sch. diag. F_4}.

\section{\textsc{Proof of the main theorem}}
\label{sec. pf. of thm. main}
There is a fibration sequence
\begin{equation}
\label{fib. seq.}
  \xymatrix{
   F_4/T \ar[r] & ET \times_T F_4/T \ar[r] & BT.
  }
\end{equation}
The cohomology rings of $F_4/T$ and $BT$ are free as $\Z$-modules and have vanishing odd parts.
As shown in Section \ref{sec. GKM graph}, $H^*(BT)$ has five generators $t_1$, $t_2$, $t_3$, $t_4$, and $\gamma$ of degree $2$
with one relation of degree $2$.
According to \cite{TW},
$H^*(F_4/T)$ has $\tau_1$, $\tau_2$, $\tau_3$, $\tau_4$, and $\gamma_1$ of degree $2$,
$\gamma_3$ of degree $6$, and $\omega$ of degree $8$ as its generators,
and $H^*(F_4/T)$ has seven relations of degree $2$, $4$, $6$, $8$, $12$, $16$, and $24$.
We can expect $H^*_T(F_4/T)$ has corresponding generators and relations.
It is easy to see the Poincar\'e series of $F_4/T$ and $BT$ are
\[
(1+x^8+x^{16})\prod_{i=1}^4 \frac{1-x^{4i}}{1-x^2} \quad \text{and} \quad \frac{1}{(1-x^2)^4},
\]
respectively.
Hence we obtain the following proposition by the Serre spectral sequence for \eqref{fib. seq.}.
\begin{prop}
\label{poincare}
$H^*_T(F_4/T)$ is free as a $\Z$-module and its Poincar\'e series is
\[
P(H^*(ET\times_T F_4/T),x)= \frac{1}{(1-x^2)^4}(1+x^8+x^{16}) \prod_{i=1}^4 \frac{1-x^{4i}}{1-x^2}.
\]
\end{prop}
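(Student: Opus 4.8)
The plan is to run the cohomology Serre spectral sequence of the fibration \eqref{fib. seq.} and to show it collapses at the $E_2$-page for parity reasons. Since $BT$ is simply connected there is no twisting of coefficients, and the $E_2$-page is $E_2^{p,q} = H^p(BT; H^q(F_4/T))$. Because $H^*(F_4/T)$ (equivalently $H^*(BT)$) is free over $\Z$, the universal-coefficient theorem has no $\mathrm{Tor}$ contribution, so $E_2^{p,q} \cong H^p(BT) \otimes_\Z H^q(F_4/T)$. Both tensor factors are free $\Z$-modules concentrated in even degrees, hence $E_2^{p,q} = 0$ unless $p$ and $q$ are both even; in particular $E_2^{p,q} = 0$ whenever $p+q$ is odd.

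Next I would observe that each differential $d_r \colon E_r^{p,q} \to E_r^{p+r,\,q-r+1}$ raises the total degree by $1$, so it connects groups whose total degrees have opposite parity. Since at most one of any two such groups is nonzero, every $d_r$ ($r \geq 2$) vanishes, and therefore $E_\infty = E_2 = H^*(BT) \otimes_\Z H^*(F_4/T)$.

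From this, freeness and the Poincar\'e series formula follow formally. In degree $n$ the spectral-sequence filtration of $H^n(ET \times_T F_4/T)$ is finite with successive quotients $E_\infty^{p,\,n-p} = H^p(BT) \otimes H^{n-p}(F_4/T)$, each a finitely generated free $\Z$-module, hence projective; so the short exact sequences making up the filtration split, and a descending induction on the filtration degree shows $H^n(ET \times_T F_4/T)$ is itself free. Taking ranks, $\operatorname{rank} H^n(ET \times_T F_4/T) = \sum_p \operatorname{rank} H^p(BT) \cdot \operatorname{rank} H^{n-p}(F_4/T)$, so the Poincar\'e series of $H^*_T(F_4/T)$ is the product of the Poincar\'e series of $H^*(BT)$ and of $H^*(F_4/T)$ recalled just before the statement, which is exactly the asserted formula.

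There is no genuine obstacle here; the only point deserving a word of care is the passage from ``the associated graded is free'' to ``the module is free,'' which uses that $\Z$ is a PID (so free $=$ projective) together with finiteness of the filtration in each degree. If one wanted the two input Poincar\'e series to be fully self-contained rather than quoted, the series for $BT = (BS^1)^4$ is immediate, and the series for $F_4/T$ is $\prod_{i=1}^4 (1-x^{2d_i})/(1-x^2)$ with $(d_1,d_2,d_3,d_4) = (2,6,8,12)$ the fundamental degrees of $F_4$; one then checks this equals $(1+x^8+x^{16})\prod_{i=1}^4 (1-x^{4i})/(1-x^2)$ using the identity $(1+x^8+x^{16})(1-x^8) = 1-x^{24}$.
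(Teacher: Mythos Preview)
Your argument is correct and is exactly the approach the paper takes: the paper simply asserts the proposition ``by the Serre spectral sequence for \eqref{fib. seq.}'' after noting that $H^*(F_4/T)$ and $H^*(BT)$ are free $\Z$-modules with vanishing odd parts, and you have just written out the standard details of that collapse argument. Your optional verification that the two forms of the Poincar\'e series for $F_4/T$ agree (via the fundamental degrees $(2,6,8,12)$ and the identity $(1+x^8+x^{16})(1-x^8)=1-x^{24}$) is a nice extra but not required, since the paper already records those series just before the statement.
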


By the Serre spectral sequence for the fibration sequence \eqref{fib. seq.},
we see that generators of $H^*_T(F_4/T)$ come from the cohomology of $F_4/T$ or $BT$.
Let us define the corresponding GKM functions $t_i$, $\gamma$, $\tau_i$, $\gamma_1$ and $\gamma_3 \in {\rm Map}(W(F_4), H^*(BT))$
for $1 \leq i \leq 4$ and GKM functions $\gamma_2$ and $\gamma_4$ to state our results simpler.
For any $w \in W(F_4)$
\begin{align*}
   t_i(w) &= t_i \:\:(i=1,\ldots,4)\\
\gamma(w) &= \gamma \\
\tau_i(w) &= w(t_i) \:\:(i=1,\ldots,4)\\
 \gamma_j &= \frac{1}{2} (e_j(\tau)-e_j(t))\:\:(j=1,2,3),
\end{align*}
and
\begin{equation*}
\gamma_4(w)=
\begin{cases}
0			&w \in W \sqcup \rho^2 \kappa W,\\
e_4(\rho^2t)	&w \in \rho^2 W \sqcup \rho \kappa W,\\
-e_4(t)		&w \in \rho W \sqcup \kappa W.
\end{cases}
\end{equation*}
Moreover we define $\omega=e_4(\tau) -e_4(t) -2\gamma_4$.
Then
\begin{equation}
\label{omega(w)}
\omega(w)=
\begin{cases}
0			&w \in W \sqcup \kappa W,\\
-e_4(\rho^2t)	&w \in \rho W \sqcup \rho \kappa W,\\
e_4(\rho t)		&w \in \rho^2 W \sqcup \rho^2 \kappa W.
\end{cases}
\end{equation}

Since $t_i$'s and $\gamma$ are constant functions, they are GKM functions.
A straightforward calculation shows that the following relation holds.
\begin{equation}
\label{id}
e_4(t) +e_4(\rho t) +e_4(\rho^2 t) =0
\end{equation}
By the schematic diagram \eqref{sch. diag. F_4} of $\mathcal{F}_4$,
one can see that $\gamma_4$ is a GKM function
since $e_4(\re t)$ is the product of all $\re t_1$, $\re t_2$, $\re t_3$, and $\re t_4$ for $\ep=0,1,2$.
The following calculation shows $\tau_i$'s satisfy the GKM condition.
For any edge $ww'$ which satisfies $w'=w\sigma_\alpha$, we have
\begin{align*}
\tau_i(w) -\tau_i(w')&=w(t_i) -w'(t_i)\\
&=w\biggl(t_i -\Big(t_i -2\frac{(t_i,\alpha)}{(\alpha,\alpha)}\alpha\Big)\biggr)\\
&=2\frac{(t_i,\alpha)}{(\alpha,\alpha)}w\alpha.
\end{align*}
Since GKM functions form a ring, for $j=1,2,3$, we see that $\gamma_j$'s are functions from $W(F_4)$ to $H^*(BT)\otimes \Z[\frac{1}{2}]$
which satisfy the GKM condition tensoring with $\Z[\frac{1}{2}]$.
The following calculations show that $\gamma_j$'s are actually $H^*(BT)$-valued functions.
Let us extend $\rho$ to an automorphism of $H^*(BT)$ naturally.
For $w \in W \sqcup \kappa W = W(Spin(9))$ and $\ep =0,1,2$,
\begin{align*}
\gamma_j(\re w)	&= \frac{1}{2}(e_j(\tau) -e_j(t))(\re w)\\
			&= \frac{1}{2}(\re e_j(w(t)) -e_j(t))\\
			&= \re \Bigl(\frac{1}{2}(e_j(w(t)) -e_j(t))\Bigr) +\frac{1}{2}(e_j(\re t) -e_j(t)).
\end{align*}
Since $w$ only permutes $t_i$'s and changes their signs, it is obvious that $\frac{1}{2}(e_j(w(t)) -e_j(t)) \in H^*(BT)$.
Then $\re (\frac{1}{2}(e_j(w(t)) -e_j(t))) \in H^*(BT)$.
On the other hand one can see that $\frac{1}{2}(e_j(\re t) -e_j(t)) \in H^*(BT)$ for $\ep =0,1,2$ as follows.
When $\ep=0$, $\frac{1}{2}(e_j(\re t) -e_j(t)) = 0$ and it is contained in $H^*(BT)$.
When $\ep=1,2$, Table \ref{(e_j(re t)-e_j(t))/2} shows the value of $\frac{1}{2}(e_j(\re t) -e_j(t))$ for $j=1,2,3$.
Then $\gamma_j$ is a $H^*(BT)$-valued function and then a GKM function.
\begin{table}[h]
\tablinesep =15pt
  \caption{the value of $\frac{1}{2}(e_j(\re t) -e_j(t))$}
  \centering
  \begin{tabular}{c|ccc|}
  \label{(e_j(re t)-e_j(t))/2}
		& $j=1$ 		& $j=2$ 			& $j=3$ \\ \hline
    $\ep=1$ & $-\gamma -t_4$ 	& $-\gamma^2 +t_4^2$ 	& $t_4\gamma(\gamma-t_4)-t_4(t_1t_2+t_2t_3+t_3t_1)$  \\
    $\ep=2$	& $-2\gamma +t_4$	& $(-2\gamma +t_4)t_4$ 	& $\gamma^3-t_4\gamma^2-\gamma(t_1t_2+t_2t_3+t_3t_1)$ \\ \hline
  \end{tabular}
\end{table}\\
The following lemma will be proved in Section \ref{pf of Lemma generate}.
\begin{lem}[see {\cite[Lemma 5.4]{FIM}}]
\label{generate}
Let $\F_4$ be the GKM graph of $F_4/T$, then
$H^*(\F_4)$ is generated by the GKM functions $t_i$, $\gamma$, $\tau_i$, $\gamma_i$, $\omega$ $(i=1,2,3,4)$ as a ring.
\end{lem}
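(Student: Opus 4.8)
The plan is to leverage the coset decomposition $W(F_4) = \coprod_{\ep=0,1,2;\,\delta=0,1} \re\kappa^\delta W$ and to reduce the problem to the already-known generation statement for $\mathcal{G}$, the GKM graph of $Spin(8)/T$, quoted from \cite[Lemma 5.4]{FIM}. First I would recall what that result gives: the cohomology ring $H^*(\mathcal{G})$ is generated over $\Z$ by (the restrictions to $W$ of) $t_i$, $\gamma$, and $\tau_i$. Since each of the six induced subgraphs $\re\kappa^\delta\mathcal{G}$ is isomorphic to $\mathcal{G}$ with labels obtained by applying $\re\kappa^\delta$, the restriction of $H^*(\F_4)$ to each such subgraph lands in the correspondingly-transformed copy of $H^*(\mathcal{G})$, which is generated by the images of $t_i$, $\gamma$, $\tau_i$ under that transformation. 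The key point is that on each subgraph these images are expressible through our global GKM functions $t_i$, $\gamma$, $\tau_i$ together with the piecewise-defined $\gamma_i$ and $\omega$: this is exactly what the formulas defining $\gamma_4$ and $\omega$ in terms of $e_4(\re t)$ were set up to encode, and the relations $R_i$, $r_1'$ in Theorem \ref{main} show that $\tau_i$ and $\gamma_i$ determine each other modulo constants.

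The main work is a dimension/Poincar\'e-series count combined with a filtration argument. Let $A \subseteq H^*(\F_4)$ be the subring generated by $t_i$, $\gamma$, $\tau_i$, $\gamma_i$, $\omega$. Proposition \ref{poincare} gives the Poincar\'e series of $H^*(\F_4) = H^*_T(F_4/T)$ exactly, and one also knows $H^*(\F_4)$ is a free $\Z$-module. I would first verify, by the relations $r_1'$, $R_i$, $r_{2i}$, $r_{12}$, that the quotient ring $\Z[t_i,\gamma,\tau_i,\gamma_i,\omega]/(\text{those relations})$ has Poincar\'e series at most that of $H^*(\F_4)$ in each degree — this is the routine but tedious Gr\"obner-basis-style computation and is the real obstacle. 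Granting that, the surjection from the abstract polynomial quotient onto $A$, composed with the inclusion $A \hookrightarrow H^*(\F_4)$, is a degreewise-surjective map of finitely generated free $\Z$-modules of the same (finite) rank in each degree, hence an isomorphism; in particular $A = H^*(\F_4)$.

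Alternatively — and this is the cleaner route I would actually pursue to avoid invoking the relations before they are justified — I would argue directly via the localization square. Every GKM function $f \in H^*(\F_4)$ restricts on $W(Spin(9)) = W \sqcup \kappa W$ to a function lying in $H^*$ of the GKM graph of $Spin(9)/T$; using \cite[Lemma 5.4]{FIM} for $Spin(8)$ together with the $\kappa$-edges, one writes $f|_{W(Spin(9))}$ as a polynomial in $t_i,\gamma,\tau_i$. Subtracting that polynomial, one reduces to $f$ supported on $\rho W \sqcup \rho^2 W \sqcup \rho\kappa W \sqcup \rho^2\kappa W$; the GKM conditions along the short-root edges connecting these cosets to $W \sqcup \kappa W$ — edges labeled $\pm\re t_i$, as read off from the schematic diagram \eqref{sch. diag. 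F_4} — force the leading term of $f$ on the $\rho$- and $\rho^2$-blocks to be divisible by the appropriate $e_k(\re t)$, and hence to be a polynomial combination of $\gamma_4$, $\omega$ and the others. Iterating on degree completes the induction. The hard part in this version is bookkeeping the short-root GKM conditions carefully enough to pin down the leading terms; once that divisibility is established the rest is a straightforward descending induction on cohomological degree, and the remaining compatibility among the six blocks is handled exactly as in \cite{FIM}.
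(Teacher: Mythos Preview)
Your first approach is circular. You write that the composite ``polynomial quotient $\to A \hookrightarrow H^*(\F_4)$'' is degreewise surjective, but its image is $A$ by construction, so asserting surjectivity onto $H^*(\F_4)$ is precisely the statement $A = H^*(\F_4)$ you are trying to prove. A Poincar\'e-series match cannot rescue this: even if the quotient ring is free of the same rank as $H^*(\F_4)$ in each degree and maps injectively, a rank-$n$ submodule of a free rank-$n$ $\Z$-module can be proper (think $2\Z \subset \Z$). In the paper's logic the implication runs the other way: Lemma~\ref{generate} gives surjectivity first, and then the rank computation of Lemma~\ref{free} yields injectivity.

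Your second approach points in the right direction but skips the actual obstacle. The restriction of a GKM function to $W(Spin(9))$ is \emph{not} a polynomial in $t_i,\gamma,\tau_i$ alone; already for $Spin(8)/T$ one needs the classes $\gamma_j = \tfrac{1}{2}(e_j(\tau)-e_j(t))$ as generators, and the whole difficulty is this factor of $\tfrac{1}{2}$. When you peel off a piece of $h$ supported on some subset of $W(F_4)$, the function you must subtract has the form $\tfrac{1}{2}\prod_k(\tau_k - \re t_{i'_n})$ times something, and you have to show that this half-product is an \emph{integer} polynomial in the $\gamma_j$'s over $H^*(BT)$, not merely in $H^*(\F_4)\otimes\Q$. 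The paper isolates this as Proposition~\ref{1/2} (proved via Lemmas~\ref{rep} and~\ref{f(n)}), and it is the technical heart of the argument. Your divisibility observation for the short-root edges is correct and does produce the factor $\omega$ (resp.\ $\omega(\omega+e_4(\rho^2 t))$) when passing to the $\rho$- and $\rho^2$-blocks, matching the paper, but after factoring out $\omega$ you are left with a GKM function on $\rho W(Spin(9))$ that must itself be shown to lie in the subring; this requires redoing the whole $Spin(9)$-type argument on that block, not ``iterating on degree.'' The paper's induction is on a filtration $\re W{}^{I_n}_{I'_n}$ of the Weyl group by nested subsets, not on cohomological degree.
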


By the fibration sequence \eqref{fib. seq.},
we can expect some relations hold in $H^*(\F_4)$, which come from the relations of $H^*(BT)$ and $H^*(F_4/T)$.
Proposition \ref{relations} claims the corresponding relations hold in $H^*(\F_4)$.
\begin{prop}
\label{relations}
The following relations hold in $H^*(\F_4) \subset {\rm Map}(W(F_4),H^*(BT))$;
\begin{align}
& r_1' = e_1(t) -2\gamma = 0, \label{gamma}\\
& R_1 = e_1(\tau)-e_1(t) -2\gamma_1 = 0,\label{R_1}		\\
& R_2 = e_2(\tau)-e_2(t) -2\gamma_2 = 0,\label{R_2}		\\
& R_3 = e_3(\tau)-e_3(t) -2\gamma_3 = 0,\label{R_3}		\\
& R_4 = e_4(\tau)-e_4(t) -2\gamma_4 -\omega = 0, \label{R_4}	\\
& r_2 = \sum_{j=1}^{2} (-1)^j \gamma_j(\gamma_{2-j} +e_{2-j}(t)) = 0, \label{r_2}\\
& r_4 = \sum_{j=1}^{4} (-1)^j \gamma_j(\gamma_{4-j} +e_{4-j}(t)) -\omega = 0,\label{r_4}\\
& r_6 = \sum_{j=2}^{4} (-1)^j \gamma_j(\gamma_{6-j} +e_{6-j}(t)) +(\gamma_2 + \gamma^2) \omega = 0,\label{r_6}\\
& r_8 = \gamma_4(\gamma_{4} +e_{4}(t)) +\omega^2 + (\gamma_4 -e_4(\rho t))\omega = 0, \label{r_8}\\
& r_{12} = \omega(\omega-e_4(\rho t))(\omega+e_4(\rho^2 t))=0. \label{omega}
\end{align}
\end{prop}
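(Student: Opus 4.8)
The plan is to verify each of the ten relations by evaluating the relevant GKM functions coset-by-coset, using the explicit piecewise descriptions of $\gamma_4$ and $\omega$ given just before the statement of the proposition, together with the definitions $\gamma_j = \tfrac12(e_j(\tau)-e_j(t))$ for $j=1,2,3$ and $\tau_i(w)=w(t_i)$. The relations $r_1'$, $R_1$, $R_2$, $R_3$ are immediate: $r_1'=0$ is the defining relation of $H^*(BT)$ and holds because $t_i$ and $\gamma$ are constant functions, while $R_j = e_j(\tau)-e_j(t)-2\gamma_j = 0$ for $j=1,2,3$ is literally the definition of $\gamma_j$. Likewise $R_4 = 0$ is the definition of $\omega$. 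So the content is entirely in the five relations $r_2$, $r_4$, $r_6$, $r_8$, and $r_{12}$.

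For $r_{12} = \omega(\omega - e_4(\rho t))(\omega + e_4(\rho^2 t))$, I would simply plug in the three possible values of $\omega$ from \eqref{omega(w)}: on $W\sqcup\kappa W$ we have $\omega=0$ so the first factor vanishes; on $\rho W\sqcup\rho\kappa W$ we have $\omega=-e_4(\rho^2 t)$ so the third factor $\omega + e_4(\rho^2 t)$ vanishes; on $\rho^2 W\sqcup\rho^2\kappa W$ we have $\omega=e_4(\rho t)$ so the second factor $\omega - e_4(\rho t)$ vanishes. Hence $r_{12}\equiv 0$ on the nose, coset by coset. The relation $r_8$ should be handled the same way: on each of the three coset-pairs, substitute the value of $\omega$ and also the value of $\gamma_4$ (which is $0$, $e_4(\rho^2 t)$, or $-e_4(t)$ respectively according to a \emph{different} partition into coset-pairs, so one must intersect the two partitions and check each of the finitely many pieces $W$, $\rho W$, $\rho^2 W$, $\kappa W$, $\rho\kappa W$, $\rho^2\kappa W$ individually). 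One then reduces everything to a polynomial identity among $e_4(t)$, $e_4(\rho t)$, $e_4(\rho^2 t)$, which is pinned down by the identity \eqref{id}, namely $e_4(t)+e_4(\rho t)+e_4(\rho^2 t)=0$.

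The relations $r_2$, $r_4$, $r_6$ are the heart of the matter and require genuine computation rather than mere case-checking, because they mix the low-degree $\gamma_j$'s with $\omega$. For these I would argue as follows. For a fixed $w\in W(F_4)$, write $w = \re\kappa^\delta u$ with $u\in W$; since $u$ only permutes the $t_i$ and flips an even number of signs, $e_j(u(t)) = e_j(t)$ — wait, that is not quite right, $u$ can change signs, so more carefully one uses that the functions $e_j(\tau)$ restricted to a coset $\re\kappa^\delta W$ factor through the $W$-action composed with $\re\kappa^\delta$, and one expands $e_j(\re\kappa^\delta u(t))$. A cleaner route: observe that the generating identity for the $\gamma_j$'s is the statement that $\prod_{i=1}^4(s + \tau_i) = \prod_{i=1}^4(s+t_i) + 2\sum_{j\geq 1}\gamma_j s^{4-j} + (\text{correction from }\gamma_4,\omega)$ as a polynomial in a formal variable $s$, and that the $r_{2k}$'s are exactly the relations obtained by forcing $\prod(s+\tau_i)$ — or rather the appropriate symmetric combination built from it — to satisfy the polynomial relation that the classes $\tau_i$ are the GKM analogues of the Chern roots, whose symmetric functions are constrained by the Borel presentation of $H^*(F_4/T)$. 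Concretely I would verify the $r_{2k}$ pointwise: on a coset where $\omega$ and $\gamma_4$ vanish (i.e.\ on $W$), the relation $r_{2k}$ collapses to $\sum_j(-1)^j\gamma_j(\gamma_{2k-j}+e_{2k-j}(t)) = 0$, which is a consequence of $\prod_i(s+\tau_i(w)) = \prod_i(s+t_i)$ when $w\in W$ permutes the $t_i$ with an even number of sign changes (so that all $e_j$ agree), expanded via $e_j(\tau) = e_j(t) + 2\gamma_j$. On the other cosets one carries the nonzero values of $\gamma_4$ and $\omega$ through the same expansion; the $\omega$ terms in $r_4$, $r_6$, $r_8$ are precisely the bookkeeping needed to absorb the discrepancy between $e_4(\tau)$ and $e_4(t)+2\gamma_4$.

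The main obstacle, as I see it, is the bookkeeping in $r_6$: the term $(\gamma_2 + \gamma^2)\omega$ couples a degree-$4$ class with the degree-$8$ class $\omega$, and verifying that this particular combination — with the constant $\gamma^2$ appearing — is exactly what is needed will require carefully tracking the values of $\gamma_1,\gamma_2,\gamma_3$ on each of the six cosets via Table~\ref{(e_j(re t)-e_j(t))/2}, substituting $\re t_i$ from \eqref{rho t_i} and its $\rho^2$-analogue, and reducing the resulting degree-$12$ polynomial identities using \eqref{id} and the relation $e_1(t) = 2\gamma$. I expect each individual reduction to be routine once set up, but the number of cases ($6$ cosets $\times$ $3$ nontrivial relations, plus $r_8$ and $r_{12}$) and the need to keep the $\gamma$ versus $t_4$ substitutions straight is where errors would creep in; organizing the computation around the three values of $\omega$ and the three values of $\gamma_4$ simultaneously, as a table indexed by the six cosets, is the way to keep it manageable.
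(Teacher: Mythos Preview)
Your treatment of $r_1'$, $R_1,\ldots,R_4$, and $r_{12}$ matches the paper exactly, and your plan for $r_8$ (plug in the constant values of $\gamma_4$ and $\omega$ on each of the six cosets and reduce via \eqref{id}) is sound, since every term in $r_8$ is piecewise constant.

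For $r_2,r_4,r_6$ your proposal is workable in principle but contains a confusion and misses the simplification the paper actually uses. You correctly catch yourself that $u\in W$ flips an even number of signs and hence does \emph{not} preserve $e_j(u(t))$ for odd $j$; but two paragraphs later you assert ``so that all $e_j$ agree'' and base the vanishing of $r_{2k}$ on $W$ on $\prod_i(s+\tau_i(w))=\prod_i(s+t_i)$, which is false for generic $w\in W$. Consequently your coset-by-coset plan is not a finite plug-in check: within each coset the functions $\gamma_1,\gamma_2,\gamma_3$ genuinely vary with $w$, so what you are left with is a polynomial identity in $e_j(\tau),e_j(t)$ to verify, not a list of constants.

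The paper sidesteps this by passing to squares. Because $W(Spin(9))$ acts by signed permutations, the functions $e_j(\tau^2)$ \emph{are} constant on each of the three cosets $\re W(Spin(9))$. One first checks, on those three cosets only, the identities
\[
e_1(\tau^2)-e_1(t^2)=0,\quad e_2(\tau^2)-e_2(t^2)-6\omega=0,\quad e_3(\tau^2)-e_3(t^2)-e_1(t^2)\omega=0,
\]
and the analogous degree-$16$ identity. Then one writes the single generating-function relation
\[
\prod_{i=1}^4(1-\tau_i^2x^2)-\prod_{i=1}^4(1-t_i^2x^2)+X=0,
\]
factors each $(1-\tau_i^2x^2)=(1-\tau_ix)(1+\tau_ix)$, substitutes $e_k(\tau)=e_k(t)+2\gamma_k$ (with the $\omega$-correction when $k=4$), and reads off the coefficients of $x^{2k}$: they are exactly $4r_{2k}$. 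Dividing by $4$ in ${\rm Map}(W(F_4),H^*(BT)\otimes\Q)$ gives the relations. So the correct generating function is $\prod(1-\tau_i^2x^2)$, not your $\prod(s+\tau_i)$; the squares are precisely what make the auxiliary identities constant on cosets and reduce everything to three easy checks rather than the six-coset bookkeeping you anticipate for $r_6$.
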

Proposition \ref{relations} is proved in Section \ref{pf of Prop relations}.
The following lemma is proved in Section \ref{pf of Lemma free}.
\begin{lem}
\label{free}
$\Z[t_i, \gamma, \tau_i, \gamma_i, \omega \suchthat 1 \leq i \leq 4]/(r_1', R_i, r_{2i}, r_{12} \suchthat 1 \leq i \leq 4)$
is free as a $\Z$-module,
and its Poincar\'e series coincides with that of $H^*_T(F_4/T)$.
\end{lem}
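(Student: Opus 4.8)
I sketch the approach I would take for Lemma~\ref{free}.

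\medskip

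The plan is to present $A:=\Z[t_i,\gamma,\tau_i,\gamma_i,\omega \suchthat 1\le i\le 4]/(r_1',R_i,r_{2i},r_{12}\suchthat 1\le i\le 4)$ as the quotient of the graded polynomial ring $R=\Z[t_i,\gamma,\tau_i,\gamma_i,\omega]$ (fourteen variables, of degrees $2,2,2,4,6,8$ for $t_i,\gamma,\tau_i,\gamma_1$ being degree $2$, and $|\gamma_2|=4$, $|\gamma_3|=6$, $|\gamma_4|=|\omega|=8$) by the ideal generated by the ten listed relations, of degrees $2,2,4,6,8,4,8,12,16,24$. The strategy is to show that for \emph{every} field $k$ the base change $A\otimes_\Z k$ is a graded complete intersection of Krull dimension $4$; this pins down $P(A\otimes k,x)$ for all $k$, and then a rank comparison yields $\Z$-freeness together with the claimed Poincar\'e series.

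\medskip

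The key step is the following: for an arbitrary field $k$, show that $A\otimes_\Z k$ is a finitely generated module over its subalgebra $S:=k[t_1,t_2,t_3,\gamma]$ (recall $t_4=2\gamma-t_1-t_2-t_3$ in $A$ by $r_1'$). By the graded Nakayama lemma it suffices that the fibre $\bar A:=(A\otimes k)/(t_1,t_2,t_3,\gamma)$ be finite dimensional over $k$. Setting $t_1=t_2=t_3=\gamma=0$ (hence $t_4=0$) in the relations gives, in $k[\tau_i,\gamma_i,\omega]$, the equations $\omega^3=0$ (from $r_{12}$), $\gamma_4^2+\gamma_4\omega+\omega^2=0$ (from $r_8$), $\gamma_3^2=2\gamma_2\gamma_4+\gamma_2\omega$ (from $r_6$), $\gamma_4=\omega-\gamma_2^2+2\gamma_1\gamma_3$ (from $r_4$), $\gamma_2=\gamma_1^2$ (from $r_2$), and $e_i(\tau)=2\gamma_i$ for $i=1,2,3$, $e_4(\tau)=2\gamma_4+\omega$ (from $R_i$). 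Because $k$ is reduced, a cascade in the radical $\sqrt{I}$ of the defining ideal $I$ of $\bar A$ forces every generator into $\sqrt I$: $\omega^3\in I\Rightarrow\omega\in\sqrt I$, then $\gamma_4^2\equiv0\Rightarrow\gamma_4\in\sqrt I$, then $\gamma_3^2\equiv0\Rightarrow\gamma_3\in\sqrt I$, then $\gamma_2^2\equiv0\Rightarrow\gamma_2\in\sqrt I$, then $\gamma_1^2\equiv0\Rightarrow\gamma_1\in\sqrt I$; consequently all $e_i(\tau)\in\sqrt I$, and since each $\tau_j$ is a root of $X^4-e_1(\tau)X^3+e_2(\tau)X^2-e_3(\tau)X+e_4(\tau)$ we get $\tau_j^4\in\sqrt I$, i.e.\ $\tau_j\in\sqrt I$. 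Hence $\sqrt I$ is the irrelevant ideal, $\bar A$ is graded Artinian, and $\dim_k\bar A<\infty$.

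\medskip

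From this, $A\otimes k$ is finite over $S$, so $\dim(A\otimes k)\le 4$; since it is the quotient of the Cohen--Macaulay ring $R\otimes k$ (dimension $14$) by ten elements, those ten elements form a regular sequence (an ideal of $\le 10$ generators whose quotient has dimension $\le 14-10$ has height exactly $10$ and is generated by a regular sequence). The Koszul complex then computes the Poincar\'e series:
\[
P(A\otimes k,x)=\frac{(1-x^2)^2(1-x^4)^2(1-x^6)(1-x^8)^2(1-x^{12})(1-x^{16})(1-x^{24})}{(1-x^2)^{10}(1-x^4)(1-x^6)(1-x^8)^2}=\frac{(1-x^4)(1-x^{12})(1-x^{16})(1-x^{24})}{(1-x^2)^8}.
\]
Using $1-x^{24}=(1-x^8)(1+x^8+x^{16})$ one checks this equals $\tfrac{1}{(1-x^2)^4}(1+x^8+x^{16})\prod_{i=1}^4\tfrac{1-x^{4i}}{1-x^2}$, which is the Poincar\'e series of $H^*_T(F_4/T)$ by Proposition~\ref{poincare}; call the common series $\sum_n c_n x^n$. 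Finally, each $A_n$ is a finitely generated abelian group with $\operatorname{rank}_\Z A_n=\dim_\Q(A\otimes\Q)_n=c_n$, while $\dim_{\mathbb F_p}(A\otimes\mathbb F_p)_n=c_n$ for every prime $p$; equality of these two numbers for all $p$ forces $A_n$ to be torsion free, hence free, and therefore $P(A,x)=\sum_n c_nx^n=P(H^*_T(F_4/T),x)$.

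\medskip

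I expect the genuinely non-formal point to be the finiteness step: one must evaluate the somewhat intricate relations $r_4,r_6,r_8,r_{12}$ (and the $R_i$) at $t=\gamma=0$ correctly and verify that the resulting system still forces nilpotence of every generator of $\bar A$ --- i.e.\ that the specific shape of these relations makes the cascade close up. The remaining ingredients (the regular-sequence criterion over a Cohen--Macaulay ring, the power-series identity, and the torsion-free conclusion from field-independence of Hilbert functions) are standard.
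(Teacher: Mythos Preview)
Your argument is correct and takes a genuinely different route from the paper's.

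The paper eliminates $\gamma_2,\gamma_4$ via $r_2,r_4$ and then splits into two cases. For $p=2$ it starts from the known regular sequence $e_1(t),\dots,e_4(t),e_1(\tau),\dots,e_4(\tau)$ of Chern classes (together with $\gamma,\omega,\gamma_3^2,\gamma_1^8$) and repeatedly applies the ``replace $a_i$ by $a_i+b$ with $b$ in the preceding ideal'' trick and Matsumura's permutation theorem to deform it into $r_1',r_{12},R_4,\dots,R_1,r_6,r_8$. For $p\ge 3$ it instead eliminates all $\gamma_i$ via the $R_i$ (using that $2$ is invertible) and deforms the Pontryagin sequence $e_i(\tau^2)$ into the undivided relations $e_1,\dots,e_4$. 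In each case the Poincar\'e series is read off from the explicit regular sequence.

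Your approach avoids this case split entirely: you work over an arbitrary field, set $t_i=\gamma=0$, and run a nilpotence cascade through the radical ($\omega\to\gamma_4\to\gamma_3\to\gamma_2\to\gamma_1\to e_i(\tau)\to\tau_j$) to see that the fibre is Artinian; then the Cohen--Macaulay height criterion (ten generators, height ten) forces the ten relations to be a regular sequence simultaneously in every characteristic. This is more conceptual and uniform, and it sidesteps the separate bookkeeping the paper needs at $p=2$ (where the $R_i$ degenerate and one must instead track $\gamma_1^8,\gamma_3^2$). The paper's approach, on the other hand, is more constructive: it exhibits exactly which classical regular sequences (Chern and Pontryagin classes) the relations of $H^*_T(F_4/T)$ deform from, which has some independent interest.
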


Now we are ready to prove Theorem \ref{main}.
\begin{proof}[Proof of Theorem \ref{main}]
Let $I$ denote the ideal $(r_1', R_i, r_{2i}, r_{12} \suchthat 1 \leq i \leq 4)$
in the polynomial ring $\Z[t_i, \gamma, \tau_i, \gamma_i, \omega \suchthat1 \leq i \leq 4]$.
We have a surjective ring homomorphism
\[
\Z[t_i, \gamma, \tau_i, \gamma_i, \omega \suchthat1 \leq i \leq 4] \to H^*(\F_4)
\]
by Lemma \ref{generate},
and it factors through $\Z[t_i, \gamma, \tau_i, \gamma_i, \omega \suchthat1 \leq i \leq 4]/I \to H^*(\F_4)$ by Proposition \ref{relations}.
It follows from Proposition \ref{poincare} and Lemma \ref{free} that
$H^*(\F_4)$ and $\Z[t_i, \gamma, \tau_i, \gamma_i, \omega \suchthat1 \leq i \leq 4]/I$ are free as $\Z$-modules.
Moreover Lemma \ref{free} claims that $\Z[t_i, \gamma, \tau_i, \gamma_i, \omega \suchthat1 \leq i \leq 4]/I$ and $H^*(\F_4)$ have the same rank in each degree.
Therefore the ring homomorphism $\Z[t_i, \gamma, \tau_i, \gamma_i, \omega \suchthat1 \leq i \leq 4]/I \to H^*(\F_4)$ is an isomorphism
and Theorem \ref{main} is proved by Theorem \ref{HHH}.
\end{proof}

\section{Proof of Lemma \ref{generate}}\label{pf of Lemma generate}
First we introduce some notation for the proof of Lemma \ref{generate}.
For a positive integer $n$, let $[n]$ and $\pm[n]$ be $\{i \in \Z \suchthat 1\leq i \leq n \}$ and $\{\pm i \in \Z \suchthat 1\leq i \leq n \}$, respectively.
For $1 \leq n \leq 4$, let $I_n$ denote an ordered $n$-tuple $(i_1, \ldots, i_n)$ of elements of $[4]$ which does not include the same entries,
and $I'_n$ denote an ordered $n$-tuple $(i'_1, \ldots, i'_n)$ of elements of $\pm[4]$ such that $|i'_k| \neq |i'_l|$ for $k\neq l$.
We often regard $I_n$, $I'_n$ as the $n$-subsets of $[4]$ by the following maps.
\[
(i_1, \ldots, i_n) \mapsto \{i_1, \ldots, i_n\}, \quad (i_1', \ldots, i_n') \mapsto \{|i_1'|, \ldots, |i_n'|\}
\]
Let $t_{i'} = {\rm sgn}(i') t_{|i'|}$. For $\ep = 0,1,2$, we define a subset $\re W\ii$ of $W(F_4)$ as:
\[
\re W\ii = \{ w \in W(F_4) \suchthat w \in \re W(Spin(9)),\ w(t_{i_k})= \re t_{i'_k} \ (1\leq k \leq n)\}
\]
We define $I_0$ and $I'_0$ to be the empty set.
Note that $\re W\ii[n-1]$ includes $\re W\ii$ and decomposes as follows.
\begin{equation}
\label{rem_decomp}
\re W\ii[n-1] = \coprod_{i_n \in [4] \sm I_{n-1}} \re W\iii{i_n}{i'_n} \ \sqcup \coprod_{i_n \in [4] \sm I_{n-1}} \re W\iii{i_n}{-i'_n}
\end{equation}

For a set $S =\{j_1, \ldots, j_k\}$ of natural numbers with $j_1 < \cdots < j_k$,
let $x_{S}$ denote a sequence $(x_{j_1}, \ldots, x_{j_k})$ for $x=t$, $\rho t$, $\rho^2 t$, $\tau$.
For $n\geq 0$, $j\leq 4$, and $\ep=0,1,2$, let $\gamma_j^{(\ep)}\ii$ be a function from $\rho^\ep W\ii$ to $\Z[\frac{1}{2}][t_1, t_2, t_3, t_4]$
defined as
\[
\gamma_j^{(\ep)}\ii = \frac{1}{2}(e_j(\tau_{[4] \sm I_n}) -e_j(\re t_{[4] \sm I'_n})),
\]
where $I_n$ and $I'_n$ in the right hand side are regarded as subsets of $[4]$.
When $n=0$ we abbreviate $\gamma^{(\ep)}_{j}{}^{\emptyset}_{\emptyset}$ by $\gamma^{(\ep)}_{j}$.
If $j \leq 0$ or $j >4-n$, we define $\gamma_j^{(\ep)}\ii = 0$.

We define a function $f^{(\ep)}{}^{I_{n-1}}_{i'_n}$ which is useful in the proof of Lemma \ref{generate} as:
\[
f^{(\ep)}{}^{I_{n-1}}_{i'_n}=\frac{1}{2}\prod_{k \in [4]\sm I_{n-1}}(\tau_{k} - \re t_{i'_n}).
\]
This function is $H^*(BT)$-valued on $\re W\ii[n-1]$,
since for any $w \in \re W\ii[n-1]$ there exists $k \in [4] \sm I_{n-1}$ such that $w \in \re W\iii{k}{i'_n} \sqcup \re W\iii{k}{-i'_n}$
by the decomposition \eqref{rem_decomp},
and then $w(t_k) -\re t_{i'_n}$ equals to $0$ or $-2 \re t_{i'_n}$.
Especially  we have
\begin{equation}
\label{f^(ep)}
f^{(\ep)}{}^{I_{n-1}}_{i'_n} (w) =
 \begin{cases}
  0 & w \in \coprod_{k \in [4]\sm I_{n-1}}\re W\iii{k}{i'_n},\\
  -\re t_{i'_n} \prod_{k \in [4]\sm I'_n} (\re t_k -\re t_{i'_n}) & w \in \coprod_{k \in [4]\sm I_{n-1}}\re W\iii{k}{-i'_n}.
 \end{cases}
\end{equation}
Let $R$ denote the subring of $H^*(\F_4)$ generated by $t_i$'s, $\gamma$, $\tau_i$'s, and $\gamma_i$'s ($1 \leq i \leq 4$).
The following proposition claims that this function $f^{(\ep)}{}^{I_{n-1}}_{i'_n}$ can be replaced partly
by an element of $R$.
\begin{prop}
\label{1/2}
For $1 \leq n \leq 4$, there is a polynomial in $\gamma_1$, $\gamma_2$, $\gamma_3$, $\gamma_4$ over $H^*(BT)$,
which coincides with the function $f^{(\ep)}{}^{I_{n-1}}_{i'_n}$ on $\re W\ii[n-1]$.
\end{prop}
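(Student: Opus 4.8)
The plan is to reduce Proposition \ref{1/2} to the assertion that, for $0\le m\le 3$ and every admissible choice of data $(\ep,I_m,I'_m)$, the function $\gamma_j^{(\ep)}\ii[m]$ agrees on $\re W\ii[m]$ with a polynomial in $\gamma_1,\gamma_2,\gamma_3,\gamma_4$ over $H^*(BT)$, and then to prove this assertion by induction on $m$.

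For the reduction, I would expand
\[
f^{(\ep)}{}^{I_{n-1}}_{i'_n}=\frac12\prod_{k\in[4]\sm I_{n-1}}(\tau_{k}-\re t_{i'_n})=\sum_{j}(-\re t_{i'_n})^{(5-n)-j}\,\tfrac12\,e_j(\tau_{[4]\sm I_{n-1}})
\]
and substitute $\tfrac12 e_j(\tau_{[4]\sm I_{n-1}})=\gamma_j^{(\ep)}\ii[n-1]+\tfrac12 e_j(\re t_{[4]\sm I'_{n-1}})$, which holds on $\re W\ii[n-1]$ by the very definition of $\gamma_j^{(\ep)}\ii[n-1]$. This gives, on $\re W\ii[n-1]$,
\[
f^{(\ep)}{}^{I_{n-1}}_{i'_n}=\sum_{j}(-\re t_{i'_n})^{(5-n)-j}\gamma_j^{(\ep)}\ii[n-1]+\tfrac12\prod_{k\in[4]\sm I'_{n-1}}(\re t_{k}-\re t_{i'_n}).
\]
Since $|i'_n|\notin|I'_{n-1}|$, the factor of the last product indexed by $|i'_n|$ equals $(1-{\rm sgn}(i'_n))\re t_{|i'_n|}$, so that product lies in $2H^*(BT)$, while $\re t_{i'_n}\in H^*(BT)$ by \eqref{rho t_i}. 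Hence Proposition \ref{1/2} follows from the assertion above with $m=n-1$.

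For the base case $m=0$ one has $\re W\ii[0]=\re W\sqcup\re\kappa W$; for $j=1,2,3$ the identity $\gamma_j^{(\ep)}=\gamma_j+\tfrac12\bigl(e_j(t)-e_j(\re t)\bigr)$ does it, the correction term lying in $H^*(BT)$ by Table \ref{(e_j(re t)-e_j(t))/2}, and for $j=4$ one writes $\gamma_4^{(\ep)}=\gamma_4+\bigl(\tfrac12\omega+\tfrac12(e_4(t)-e_4(\re t))\bigr)$, observes that $\omega$ is constant on $\re W\ii[0]$ by \eqref{omega(w)}, and uses \eqref{id} to see the bracketed constant lies in $H^*(BT)$ (the cases $j\le 0$ and $j>4$ are trivial by convention). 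For the inductive step, fix $I_m=(I_{m-1},i_m)$ and $I'_m=(I'_{m-1},i'_m)$; on $\re W\ii[m]\subseteq\re W\ii[m-1]$ the coordinate $\tau_{i_m}$ is pinned to $\re t_{i'_m}$, whence $e_j(\tau_{[4]\sm I_{m-1}})=e_j(\tau_{[4]\sm I_m})+\re t_{i'_m}e_{j-1}(\tau_{[4]\sm I_m})$ there, and comparing with the analogous splitting $e_j(\re t_{[4]\sm I'_{m-1}})=e_j(\re t_{[4]\sm I'_m})+\re t_{|i'_m|}e_{j-1}(\re t_{[4]\sm I'_m})$ and dividing by $2$ yields, on $\re W\ii[m]$,
\[
\gamma_j^{(\ep)}\ii[m-1]=\gamma_j^{(\ep)}\ii[m]+\re t_{i'_m}\gamma_{j-1}^{(\ep)}\ii[m]+d_j,\qquad d_j:=\tfrac{{\rm sgn}(i'_m)-1}{2}\,\re t_{|i'_m|}\,e_{j-1}(\re t_{[4]\sm I'_m})\in H^*(BT).
\]
Multiplying by $u^j$ and summing gives $(1+\re t_{i'_m}u)\sum_j\gamma_j^{(\ep)}\ii[m]u^j=\sum_j\bigl(\gamma_j^{(\ep)}\ii[m-1]-d_j\bigr)u^j$; inverting $1+\re t_{i'_m}u$ over $H^*(BT)$ expresses each $\gamma_j^{(\ep)}\ii[m]$, on $\re W\ii[m]$, as an $H^*(BT)$-linear combination of the $\gamma_{j'}^{(\ep)}\ii[m-1]$ and a constant, so the inductive hypothesis finishes the step.

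The step I expect to be the main obstacle is the $j=4$ part of the base case: for $j\le 3$ the correction $\tfrac12(e_j(t)-e_j(\re t))$ is plainly in $H^*(BT)$, but for $j=4$ the summands $\tfrac12\omega$ and $\tfrac12(e_4(t)-e_4(\re t))$ are individually only half-integral on $\re W\ii[0]$, and one must combine \eqref{id} with the explicit case values of $\gamma_4$ and $\omega$ to see that their sum is a genuine class of $H^*(BT)$; the parallel bookkeeping of the sign of $i'_n$, which is exactly what renders $\tfrac12\prod(\re t_k-\re t_{i'_n})$ and the constants $d_j$ integral, is a minor but pervasive nuisance.
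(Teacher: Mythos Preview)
Your proposal is correct and follows essentially the same route as the paper: the reduction step is exactly the content of Lemma~\ref{rep}, your inductive recurrence $\gamma_j^{(\ep)}\ii[m-1]=\gamma_j^{(\ep)}\ii[m]+\re t_{i'_m}\gamma_{j-1}^{(\ep)}\ii[m]+d_j$ is precisely Lemma~\ref{f(n)}, and your base case is the same computation of $\gamma_j-\gamma_j^{(\ep)}\in H^*(BT)$ that the paper performs. The only cosmetic difference is that you invert the recurrence via the generating-function trick $(1+\re t_{i'_m}u)^{-1}$ whereas the paper iterates it directly to obtain the closed sum $\sum_k\gamma_{j-k}^{(\ep)}\ii[m-1](-\re t_{i'_m})^k$; these are equivalent.
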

Proposition \ref{1/2} is a consequence of Lemma \ref{rep} and \ref{f(n)} below.
\begin{lem}
\label{rep}
For $1 \leq n \leq 4$, there is a polynomial in $\gamma_j^{(\ep)}\ii[n-1]$'s $(1 \leq j \leq 4-(n-1))$ over $H^*(BT)$,
which coincides with $f^{(\ep)}{}^{I_{n-1}}_{i'_n}$ on $\re W\ii[n-1]$.
\end{lem}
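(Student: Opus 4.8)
The plan is to expand the defining product of $f^{(\ep)}{}^{I_{n-1}}_{i'_n}$ into elementary symmetric functions of $\tau_{[4]\sm I_{n-1}}$ and then substitute the defining identity $e_j(\tau_{[4]\sm I_{n-1}}) = 2\gamma_j^{(\ep)}\ii[n-1] + e_j(\re t_{[4]\sm I'_{n-1}})$; the result will in fact be linear in the $\gamma_j^{(\ep)}\ii[n-1]$'s. Write $m = 4-(n-1)$ and put $S = [4]\sm I_{n-1}$, $T = [4]\sm I'_{n-1}$, both $m$-element subsets of $[4]$. For $w \in \re W\ii[n-1]$ the values $\tau_k(w) = w(t_k)$ ($k \in S$) and $\re t_{i'_n}$ all lie in $H^*(BT)$, so the elementary identity $\prod_{k\in S}(\tau_k - X) = \sum_{j=0}^{m}(-1)^{m-j} e_j(\tau_S)\,X^{m-j}$ can be applied pointwise with $X = \re t_{i'_n}$. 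Feeding in the relation for $\gamma_j^{(\ep)}\ii[n-1]$, using $\gamma_0^{(\ep)}\ii[n-1] = 0$, and recognizing $\sum_{j}(-1)^{m-j}e_j(\re t_T)(\re t_{i'_n})^{m-j} = \prod_{j\in T}(\re t_j - \re t_{i'_n})$ (the same identity applied to the sequence $\re t_T$), one obtains the identity of functions on $\re W\ii[n-1]$
\[
f^{(\ep)}{}^{I_{n-1}}_{i'_n} \;=\; \sum_{j=1}^{m}(-1)^{m-j}(\re t_{i'_n})^{m-j}\,\gamma_j^{(\ep)}\ii[n-1] \;+\; \frac{1}{2}\prod_{j\in T}(\re t_j - \re t_{i'_n}).
\]
The coefficients $(-1)^{m-j}(\re t_{i'_n})^{m-j}$ clearly lie in $H^*(BT)$, so Lemma \ref{rep} will follow once the last summand is shown to be an element of $H^*(BT)$.

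Showing that $\frac{1}{2}\prod_{j\in T}(\re t_j - \re t_{i'_n}) \in H^*(BT)$ is the only genuinely delicate step, since this product is a priori only half-integral. The point will be that $a := |i'_n|$ lies in $T = [4]\sm I'_{n-1}$ — the tuple $I'_n = (I'_{n-1}, i'_n)$ being required to have pairwise distinct absolute values — so the product contains the factor $\re t_a - \re t_{i'_n}$. I then split on the sign of $i'_n$. If $i'_n > 0$, then $\re t_{i'_n} = \re t_a$ and this factor vanishes, so the summand is $0$. If $i'_n < 0$, then $\re t_{i'_n} = -\re t_a$ and this factor equals $2\re t_a$, whence
\[
\frac{1}{2}\prod_{j\in T}(\re t_j - \re t_{i'_n}) \;=\; \re t_a\prod_{j\in T\sm\{a\}}(\re t_j + \re t_a),
\]
a polynomial in $\re t_1, \dots, \re t_4 \in H^*(BT)$ and hence an element of $H^*(BT)$. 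In either case the displayed formula exhibits $f^{(\ep)}{}^{I_{n-1}}_{i'_n}$ as a (degree-one) polynomial in $\gamma_1^{(\ep)}\ii[n-1], \dots, \gamma_m^{(\ep)}\ii[n-1]$ over $H^*(BT)$, which is the assertion of Lemma \ref{rep}; the argument is uniform in $n$, so no induction is needed.

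The main thing to be careful about is the bookkeeping of the two index tuples: $I_{n-1}$ and $I'_{n-1}$ are in general different subsets of $[4]$, because an element of $\re W(Spin(9))$ need not fix the weights $t_1, \dots, t_4$, so the substitution must be carried out with $S$ indexing the $\tau$'s and $T$ the $\re t$'s. Apart from this the computation is just one use of the identity $\prod_k(X - z_k) = \sum_j(-1)^j e_j(z)\,X^{m-j}$; the factor $\tfrac{1}{2}$ in the definition of $f^{(\ep)}{}^{I_{n-1}}_{i'_n}$ gets absorbed partly into the $\gamma_j^{(\ep)}\ii[n-1]$'s, cancelling the $2$ in $e_j(\tau_S) = 2\gamma_j^{(\ep)}\ii[n-1] + e_j(\re t_T)$, and partly into the factor $\re t_a - \re t_{i'_n}$, which is $0$ or $2\re t_a$. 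As a consistency check one can match the displayed formula against the case-by-case values \eqref{f^(ep)} of $f^{(\ep)}{}^{I_{n-1}}_{i'_n}$: for $i'_n < 0$ the constant term above is exactly the value $-\re t_{i'_n}\prod_{k\in[4]\sm I'_n}(\re t_k - \re t_{i'_n})$ that $f^{(\ep)}{}^{I_{n-1}}_{i'_n}$ takes on $\coprod_k \re W\iii{k}{-i'_n}$.
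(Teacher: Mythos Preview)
Your proof is correct and follows essentially the same route as the paper: expand the product into elementary symmetric polynomials of $\tau_S$, substitute $e_j(\tau_S)=2\gamma_j^{(\ep)}\ii[n-1]+e_j(\re t_T)$, and then handle the residual $\tfrac12$-term by a sign-of-$i'_n$ case split. The only cosmetic difference is that you re-collect the remainder as $\tfrac12\prod_{j\in T}(\re t_j-\re t_{i'_n})$ and factor out $\re t_a-\re t_{i'_n}\in\{0,2\re t_a\}$, whereas the paper uses the recursion $e_j(\re t_T)=e_j(\re t_{T\setminus\{a\}})+e_{j-1}(\re t_{T\setminus\{a\}})\re t_a$ to reach the same explicit formulas.
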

\begin{lem}[cf. {\cite[Lemma 5.3]{FIM}}]
\label{f(n)}
For $1 \leq n \leq 4$ and $1 \leq j \leq 4-n$, there is a polynomial in $\gamma^{(\ep)}_{1}\ii[n-1], \ldots, \gamma^{(\ep)}_{4-n}\ii[n-1]$
over $H^*(BT)$,
which coincides with $\gamma_j^{(\ep)}\iii{i_n}{i'_n}$ on $\re W\iii{i_n}{i'_n}$.
More explicitly,
\[
\gamma_j^{(\ep)}\ii =
\begin{cases}
\sum\limits_{k=0}^{j-1} \gamma^{(\ep)}_{j-k}\ii[n-1](-\rho^\ep t_{i'_n})^k & {\rm sgn}\ i'_n = 1\\
\sum\limits_{k=0}^{j-1} \gamma^{(\ep)}_{j-k}\ii[n-1](-\rho^\ep t_{i'_n})^k
 + \sum\limits_{k=1}^j e_{j-k}(\re t_{[4]\sm I'_n})(-\re t_{i'_n})^k & {\rm sgn}\ i'_n = -1.
\end{cases}
\]
\end{lem}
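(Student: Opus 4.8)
The plan is to prove the explicit formula by a direct computation, separating the two sign cases. Fix $\ep$, an $(n-1)$-tuple $I_{n-1}$ with signed companion $I'_{n-1}$, and a new index $i_n \in [4]\sm I_{n-1}$ with a chosen sign for $i'_n$. The starting point is the defining identity $\gamma_j^{(\ep)}\ii = \frac12\bigl(e_j(\tau_{[4]\sm I_n}) - e_j(\re t_{[4]\sm I'_n})\bigr)$, where now $[4]\sm I_n = ([4]\sm I_{n-1})\sm\{i_n\}$. On the subset $\re W\iii{i_n}{i'_n}$ every $w$ satisfies $w(t_{i_n}) = \re t_{i'_n}$, so I may split off the variable indexed by $i_n$ from the variables indexed by $[4]\sm I_{n-1}$. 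The first step is the elementary recursion for elementary symmetric polynomials: for a variable $y$ adjoined to a set of variables $Y$, one has $e_j(Y) = \sum_{k=0}^{j} e_{j-k}(Y\sm\{y\})\,(-1)^k\,y^{\,k}$ rearranged as $e_j(Y\sm\{y\}) = \sum_{k=0}^{j} e_{j-k}(Y)\,y^k$ — more precisely, $e_j(Y) = e_j(Y\sm\{y\}) + y\,e_{j-1}(Y\sm\{y\})$, which iterates to $e_j(Y\sm\{y\}) = \sum_{k=0}^{j} (-1)^k y^k e_{j-k}(Y)$. Applying this with $Y = \tau_{[4]\sm I_{n-1}}$, $y = \tau_{i_n}$ and also with $Y = \re t_{[4]\sm I'_{n-1}}$, $y = \re t_{i'_n}$ gives
\[
\gamma_j^{(\ep)}\iii{i_n}{i'_n}
 = \frac12\sum_{k=0}^{j}(-1)^k\Bigl(e_{j-k}(\tau_{[4]\sm I_{n-1}})\,\tau_{i_n}^{\,k} - e_{j-k}(\re t_{[4]\sm I'_{n-1}})\,(\re t_{i'_n})^k\Bigr).
\]

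The second step is to handle the mismatch $\tau_{i_n}^{\,k}$ versus $(\re t_{i'_n})^k$. On $\re W\iii{i_n}{i'_n}$ we have $\tau_{i_n} = \re t_{i'_n}$ as functions, so $\tau_{i_n}^{\,k} = (\re t_{i'_n})^k = (-\re t_{i'_n})^k\cdot(-1)^k$; substituting and collecting signs turns each summand into $(-\re t_{i'_n})^k\cdot\frac12\bigl(e_{j-k}(\tau_{[4]\sm I_{n-1}}) - e_{j-k}(\re t_{[4]\sm I'_{n-1}})\bigr) = (-\re t_{i'_n})^k\,\gamma_{j-k}^{(\ep)}\ii[n-1]$. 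Since $\gamma_0^{(\ep)}\ii[n-1] = \frac12(1-1) = 0$ and $\gamma_{j-k}^{(\ep)}\ii[n-1] = 0$ for $j-k \leq 0$, the $k=j$ term drops and the sum runs $k = 0,\dots,j-1$, giving exactly the claimed formula in the case $\operatorname{sgn} i'_n = 1$. For $\operatorname{sgn} i'_n = -1$ the only change is that $\re t_{i_n}$ and $\re t_{i'_n}$ now differ by a sign in the $t$-part, i.e. $w(t_{i_n}) = \re t_{i'_n} = -\re t_{|i'_n|}$ while in $e_{j-k}(\re t_{[4]\sm I'_{n-1}})$ the variable appearing is $\re t_{|i'_n|} = -\re t_{i'_n}$; so splitting off $\re t_{|i'_n|}$ from the $t$-side introduces, relative to the $\tau$-side computation, an extra correction term $\frac12\sum_{k=1}^{j} e_{j-k}(\re t_{[4]\sm I'_n})\,\bigl((-\re t_{i'_n})^k - (\re t_{i'_n})^k\bigr)$-type discrepancy. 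Tracking this carefully — which is the only place the two cases genuinely diverge — produces the additional sum $\sum_{k=1}^{j} e_{j-k}(\re t_{[4]\sm I'_n})(-\re t_{i'_n})^k$ with integer (not half-integer) coefficients, because the $\frac12$ is absorbed by the fact that the two $t$-expressions differ by twice an $H^*(BT)$-element.

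Finally, once the explicit formula is established, the assertion that there is a polynomial in $\gamma_1^{(\ep)}\ii[n-1],\dots,\gamma_{4-n}^{(\ep)}\ii[n-1]$ over $H^*(BT)$ is immediate: the coefficients $(-\re t_{i'_n})^k$ and $e_{j-k}(\re t_{[4]\sm I'_n})$ lie in $H^*(BT)$ (here I use that $\re t_{i'_n}$ and the $\re t_k$ are, by the rewriting of $\Phi(F_4)$ in Section \ref{sec. GKM graph} and by \eqref{rho t_i}, honest elements of $H^*(BT)$), and only $\gamma_1^{(\ep)}\ii[n-1],\dots,\gamma_j^{(\ep)}\ii[n-1]$ with $j \leq 4-n \leq 4-(n-1)$ occur. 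The main obstacle I anticipate is purely bookkeeping: getting the signs right in the $\operatorname{sgn} i'_n = -1$ case, in particular keeping straight the three-way distinction between the index $i_n$ (on the $\tau$-side), the signed index $i'_n$, and its absolute value $|i'_n|$ (on the $\re t$-side), and verifying that all the stray factors of $\frac12$ cancel so that the correction term indeed has $H^*(BT)$-coefficients. There is no conceptual difficulty beyond the symmetric-function recursion; it is the careful case analysis that carries the weight, exactly as in \cite[Lemma 5.3]{FIM} which this generalizes.
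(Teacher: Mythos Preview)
Your proposal is correct and follows essentially the same route as the paper's own proof: both exploit the relation $\tau_{i_n} = \re t_{i'_n}$ on $\re W\iii{i_n}{i'_n}$ together with the standard recursion for elementary symmetric polynomials, and the sign dichotomy arises in both because the variable removed on the $t$-side is $\re t_{|i'_n|}$ rather than $\re t_{i'_n}$. The only cosmetic difference is that the paper derives the one-step identity $\gamma_j^{(\ep)}\ii[n-1] - \gamma_j^{(\ep)}\ii = \gamma_{j-1}^{(\ep)}\ii\,\re t_{i'_n}$ (plus the extra term $e_{j-1}(\re t_{[4]\sm I'_n})\,\re t_{i'_n}$ when $\operatorname{sgn} i'_n = -1$) and then iterates in $j$, whereas you apply the closed form $e_j(Y\sm\{y\}) = \sum_k (-y)^k e_{j-k}(Y)$ directly; just note that with your bookkeeping the correction in the negative-sign case first appears with $e_{j-k}(\re t_{[4]\sm I'_{n-1}})$, and one further use of the same recursion is needed to rewrite it as the stated $\sum_{k=1}^{j} e_{j-k}(\re t_{[4]\sm I'_n})(-\re t_{i'_n})^k$.
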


\begin{proof}[Proof of Proposition \ref{1/2}]
By Lemma \ref{rep},
there is a polynomial in $\gamma_j^{(\ep)}\ii[n-1]$'s $(1 \leq j \leq 4-(n-1))$ over $H^*(BT)$,
which coincides with $f^{(\ep)}{}^{I_{n-1}}_{i'_n}$ on $\re W\ii[n-1]$ for $\ep=0,1,2$.
Then by Lemma \ref{f(n)} $\gamma_j^{(\ep)}\iii{i_n}{i'_n}$ can be replaced by some polynomial
in $\gamma^{(\ep)}_{1}\ii[n-1], \ldots, \gamma^{(\ep)}_{4-n}\ii[n-1]$ over $H^*(BT)$.
By a descending induction on $n$ we reached to a polynomial in $\gamma_1^{(\ep)}$, $\gamma_2^{(\ep)}$, $\gamma_3^{(\ep)}$, $\gamma_4^{(\ep)}$ over $H^*(BT)$,
which coincides with $f^{(\ep)}{}^{I_{n-1}}_{i'_n}$ on $\re W\ii[n-1]$ for $\ep=0,1,2$.
Next we need to show that
$\gamma_j - \gamma^{(\ep)}_{j} \in H^*(BT)$ on $\re W(Spin(9))$ for $1 \leq j \leq 4$ and $\ep=0,1,2$
to complete the proof of Proposition \ref{1/2}.
By definition we have
\[
\gamma^{(\ep)}_{j} = \gamma_j +\frac{1}{2}(e_j(t)-e_j(\re t)) \quad (j=1,2,3).
\]
For $\ep=0,1,2$ and $j=1,2,3$, Table \ref{(e_j(re t)-e_j(t))/2} shows that $(e_j(t)-e_j(\re t))/2 \in H^*(BT)$
and then $\gamma_j - \gamma_j^{(\ep)} \in H^*(BT)$ on $\re W(Spin(9))$.
By the definition of $\gamma_4$ and the equation \eqref{id}, we have
\begin{align*}
\gamma^{(0)}_{4} &= \gamma_4				&&\text{on } W(Spin(9)),\\
\gamma^{(1)}_{4} &= \gamma_4 +e_4(t)		&&\text{on } \rho W(Spin(9)),\\
\gamma^{(2)}_{4} &= \gamma_4 -e_4(\rho^2 t)	&&\text{on } \rho^2 W(Spin(9)).
\end{align*}
Therefore there is a polynomial in $\gamma_1$, $\gamma_2$, $\gamma_3$, $\gamma_4$ over $H^*(BT)$,
which coincides with the function $f^{(\ep)}{}^{I_{n-1}}_{i'_n}$ on $\re W\ii[n-1]$.
\end{proof}

\begin{proof}[Proof of Lemma \ref{rep}]
Without loss of generality,
we may suppose that $I_{n-1} = (1, \ldots, n-1)$.
Note that $e_j(x_{S}) = 0$ for $j > \# S$ or $j < 0$,
and that we have
\begin{equation}
\label{e_j}
e_j(x_1, \ldots, x_{m-1}, x_m) = e_j(x_1, \ldots, x_{m-1}) +e_{j-1}(x_1, \ldots, x_{m-1})x_m.
\end{equation}
By the definition of $\gamma_j^{(\ep)}\ii[n-1]$ we can expand the GKM function $f^{(\ep)}{}^{I_{n-1}}_{i'_n}$ as follows.
\begin{align*}
\frac{1}{2}\prod_{l=0}^{4-n}(\tau_{n+l} - \re t_{i'_n}) =& \frac{1}{2} \sum_{j=0}^{5-n} e_j(\tau_{[4] \sm I_{n-1}})(-\re t_{i'_n})^{5-n-j} \nonumber\\
=& \frac{1}{2} \sum_{j=0}^{5-n} (2\gamma_j^{(\ep)}\ii[n-1] + e_j(\re t_{[4] \sm I'_{n-1}}))(-\re t_{i'_n})^{5-n-j}
\end{align*}
Pay attention to the sign of $i'_n$ and recall that $[4]\sm I'_{n-1} = \{i \in [4] \suchthat \pm i \not \in I'_{n-1} \}$.
By \eqref{e_j}, the above equals to
\begin{align*}
\sum_{j=0}^{5-n} \gamma_j^{(\ep)}\ii[n-1](-\re t_{i'_n})^{5-n-j} + \frac{1}{2}\sum_{j=0}^{5-n} (e_j(\re t_{[4] \sm I'_{n}}) + e_{j-1}(\re t_{[4] \sm I'_{n}}) \re t_{|i'_n|})(-\re t_{i'_n})^{5-n-j}\\
=\begin{cases}
\sum\limits_{j=0}^{5-n} \gamma_j^{(\ep)}\ii[n-1](-\re t_{i'_n})^{5-n-j}		& {\rm sgn}\ i_n' =1\\
\sum\limits_{j=0}^{5-n} \gamma_j^{(\ep)}\ii[n-1](-\re t_{i'_n})^{5-n-j}
+\sum\limits_{j=0}^{4-n}e_j(\re t_{[4] \sm I'_{n}})(-\re t_{i'_n})^{5-n-j} 	& {\rm sgn}\ i_n' =-1.
\end{cases}
\end{align*}
\end{proof}

\begin{proof}[Proof of Lemma \ref{f(n)}]
The relation $\tau_{i_n} = \re t_{i'_n}$ holds on $\re W\iii{i_n}{i'_n}$.
Then we have
\begin{align*}
 &\ \gamma_j^{(\ep)}\ii[n-1] - \gamma_j^{(\ep)}\iii{i_n}{i'_n} \\
=&\ \frac{1}{2}(e_j(\tau_{i \in [4] \sm I_{n-1}}) - e_j(\re t_{i' \in [4]\sm I'_{n-1}})) - \frac{1}{2}(e_j(\tau_{i \in [4] \sm I_{n}}) - e_j(\re t_{i' \in [4] \sm I'_{n}}))\\
=&\ \frac{1}{2}(e_{j-1}(\tau_{i \in [4] \sm I_{n}})\tau_{i_n} - e_{j-1}(\re t_{i' \in [4] \sm I'_{n}})\re t_{|i'_n|})\\
=&\ 
\begin{cases}
\gamma_{j-1}^{(\ep)}\ii \re t_{i'_n} & {\rm sgn}\ i'_n = 1\\
\gamma_{j-1}^{(\ep)}\ii \re t_{i'_n} + e_{j-1}(\re t_{i' \in [4] \sm I'_{n}})\re t_{i'_n} & {\rm sgn}\ i'_n = -1.
\end{cases}
\end{align*}
Iterated use of this equation shows that
\[
\gamma_j^{(\ep)}\iii{i_n}{i'_n} =
\begin{cases}
\sum\limits_{k=0}^{j-1} \gamma^{(\ep)}_{j-k}\ii[n-1](-\rho^\ep t_{i'_n})^k & {\rm sgn}\ i'_n = 1\\
\sum\limits_{k=0}^{j-1} \gamma^{(\ep)}_{j-k}\ii[n-1](-\rho^\ep t_{i'_n})^k
 + \sum\limits_{k=1}^j e_{j-k}(\re t_{[4]\sm I'_n})(-\re t_{i'_n})^k & {\rm sgn}\ i'_n = -1.
\end{cases}
\]

\end{proof}

Now we are ready to prove Lemma \ref{generate}.
\begin{proof}[Proof of Lemma \ref{generate}]
We show that any GKM function $h \in H^*(\F_4)$
belongs to the subring $R$ generated by $t_i$'s, $\gamma$, $\tau_i$'s, $\gamma_i$'s, and $\omega$ $(1 \leq i \leq 4)$.
By the definition of $\rho$,
the set of all vertices $W(F_4)$ of $\F_4$ decomposes as:
\[
W(F_4) = W(Spin(9)) \sqcup \rho W(Spin(9)) \sqcup \rho^2 W(Spin(9)).
\]
For each $\ep=0,1,2$, $\re W(Spin(9))$ has a filtration
\[
\re W\ii[4] \subset \cdots \subset \re W\ii[n] \subset \re W\ii[n-1] \subset \cdots \subset \re W\ii[0] = \re W(Spin(9)).
\]
By descending induction on $n$,
we will show that any GKM function $h$ can be modified to be $0$ on $\re W\ii$
by subtracting some GKM function in $R$.
Moreover, in the induction step on $n$,
we give an induction to fill the decomposition \eqref{rem_decomp} of $\re W\ii[n-1]$.

Let $0 \leq n \leq 4$.
The following claim in the case where $n=0$ shows that $h$ can be modified to be $0$ on $W(Spin(9))$.
\begin{claim}[$n$]
\label{claim1}
For any ordered $n$-tuples $I_n$, $I'_n$ and any function $h$
from $W\ii$ to $H^*(BT)$ which satisfies the GKM condition on $W\ii$,
there is a GKM function $G \in R$
which coincides with $h$ on $W\ii$.
\end{claim}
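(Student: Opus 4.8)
The plan is to prove Claim($n$) for every pair of ordered tuples $I_n$, $I'_n$ simultaneously, by descending induction on $n$ from $n=4$ down to $n=0$. The base case $n=4$ is immediate: since $W(Spin(9))$ is the group of all signed permutations of $t_1,\dots,t_4$, the four conditions $w(t_{i_k})=t_{i'_k}$ pin down a unique vertex $w_0$, so $W\ii[n]$ (with $n=4$) equals $\{w_0\}$; and since $H^*(BT)=\Z[t_1,\dots,t_4,\gamma]/(2\gamma-e_1(t))$ sits inside $R$ as the subring of constant functions, the constant function $G\equiv h(w_0)$ belongs to $R$ and agrees with $h$ on $W\ii[n]$. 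The GKM hypothesis is not needed here.

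For the inductive step, assume Claim($n+1$) and let $h\colon W\ii[n]\to H^*(BT)$ satisfy the GKM condition on $W\ii[n]$. Choose an index $\ell\in[4]$ not occurring in $I'_n$ (possible since $n\le 3$) and use the decomposition \eqref{rem_decomp}, with $n$ replaced by $n+1$ and $i'_{n+1}=\ell$, to split
\[
W\ii[n] = A \sqcup B, \qquad A := \coprod_{j \in [4] \sm I_n} W^{(I_n, j)}_{(I'_n, \ell)}, \qquad B := \coprod_{j \in [4] \sm I_n} W^{(I_n, j)}_{(I'_n, -\ell)}.
\]
Every block $W^{(I_n, j)}_{(I'_n, \pm\ell)}$ is of ``type $n+1$'' and inherits the GKM condition, so Claim($n+1$) applies to it; the internal edges of $\F_4$ available for the argument are the reflections $\sigma_{t_j}$ and $\sigma_{t_j\pm t_{j'}}$ with $j,j'\in[4]\sm I_n$. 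The idea is to build $G$ by filling in these blocks one at a time along the decomposition, at each step correcting $h$ by an element of $R$ that vanishes on the blocks already treated. Over $A$ I would run an inner induction on $j\in[4]\sm I_n$: for the current block, Claim($n+1$) produces a correction in $R$ matching $h$ there; after subtracting it, the GKM relations along the long-root reflections $\sigma_{t_j\pm t_{j'}}\in W(Spin(8))$ joining the blocks force the residual of $h$ on the untreated blocks into a suitable ideal, and the resulting denominators are absorbed using the auxiliary functions $f^{(0)}{}^{I_n}_{j'}\in R$ (Proposition \ref{1/2}), whose values \eqref{f^(ep)} vanish on the treated blocks. Once $A$ is exhausted, $h$ vanishes on $A$.

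It then remains to treat $B$. For $w\in W^{(I_n,j)}_{(I'_n,-\ell)}$ the sign-change reflection $\sigma_{t_j}\in W(Spin(9))$, corresponding to the short root $t_j$ of $F_4$, satisfies $w\sigma_{t_j}\in W^{(I_n,j)}_{(I'_n,\ell)}\subset A$, so the GKM condition $h(w)-h(w\sigma_{t_j})\in(w t_j)=(t_\ell)$ together with $h|_A=0$ shows $h|_B$ is divisible by $t_\ell$. The function $f^{(0)}{}^{I_n}_{\ell}\in R$ vanishes on $A$ and equals the class $-t_\ell\prod_{k\in[4]\sm I'_{n+1}}(t_k-t_\ell)$ on $B$ by \eqref{f^(ep)}; subtracting $R$-multiples of it, and of the further auxiliary functions of Lemma \ref{f(n)}, strips the residual of $h$ on $B$ down to a GKM function vanishing on all of $W\ii[n]$. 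The sum of all the corrections is the desired $G\in R$, completing the induction.

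The step I expect to be the main obstacle is the bookkeeping within the inductive step. The $\F_4$-edges linking two blocks $W^{(I_n,j)}_{(I'_n,\ell)}$ and $W^{(I_n,j')}_{(I'_n,\ell)}$ carry labels $w(t_j\pm t_{j'})$ that vary with the vertex $w$, so one cannot pass between blocks simply by dividing $h$ by a single fixed cohomology class. Making the inner induction run — keeping every correction inside $R$ while preserving the vanishing already achieved — is precisely where Proposition \ref{1/2}, and through it Lemma \ref{rep} and Lemma \ref{f(n)}, is essential: it guarantees that the divided-difference data relating adjacent blocks is already a polynomial in $\gamma_1,\dots,\gamma_4$ over $H^*(BT)$, hence lies in $R$.
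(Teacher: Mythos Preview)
Your skeleton --- descending induction on $n$ with trivial base case $n=4$, split via \eqref{rem_decomp} into a positive half $A$ and a negative half $B$, then an inner induction over the blocks of each --- is exactly the paper's. But the tools are misassigned in both halves.

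For the sweep over $A$ you do not need Proposition~\ref{1/2} at all. The multiplier that vanishes on the already-treated blocks of $A$ is simply $\prod_{j\,\text{treated}}(\tau_{j}-t_{\ell})$, which lies in $R$ because each $\tau_j$ and $t_\ell$ does; the GKM relation along the long root $t_j-t_{j'}$ supplies precisely the factor $\tau_j(w)-t_\ell$ on the next block. (Only the roots $t_j-t_{j'}$ link blocks within $A$; the roots $t_j+t_{j'}$ link $A$ to $B$.) Your object ``$f^{(0)}{}^{I_n}_{j'}$'' is not well-formed --- the lower index of $f^{(\ep)}$ is a \emph{primed} entry $i'_{n+1}\in\pm[4]$, not an unprimed $j'\in[4]\sm I_n$ --- and $f^{(0)}{}^{I_n}_{\ell}$ vanishes identically on $A$ by \eqref{f^(ep)}, so it cannot separate treated from untreated blocks there. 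Proposition~\ref{1/2} enters only at the passage to $B$: once $h|_A=0$, the short-root edge $\sigma_{t_j}$ \emph{together with} the long-root edges $\sigma_{t_j+t_{j'}}$ (all of which land in $A$) force $h(w)$ to be divisible not merely by $t_\ell$ but by the full value $f^{(0)}{}^{I_n}_{\ell}(w)=-t_\ell\prod_{k\in[4]\sm I'_{n+1}}(t_k-t_\ell)$; divisibility by $t_\ell$ alone would not let you factor through $f^{(0)}{}^{I_n}_{\ell}$. One then runs a second inner induction over $B$ with multipliers $f^{(0)}{}^{I_n}_{\ell}\cdot\prod_{j\,\text{treated}}(\tau_{j}+t_{\ell})$, and Proposition~\ref{1/2} is what places $f^{(0)}{}^{I_n}_{\ell}$ in $R$. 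Finally, at each step you must check that after dividing out the multiplier the quotient $w\mapsto g_w$ again satisfies the GKM condition on the current block --- otherwise Claim($n{+}1$) cannot be invoked. This is exactly where your ``varying labels'' worry is resolved: the internal-edge labels of the block are coprime in $H^*(BT)$ to every factor of the multiplier, so divisibility of $h(w)-h(w')$ by the edge label descends to $g_w-g_{w'}$.
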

We show this claim by descending induction on $n$.
For $n=4$, since $W\ii[4]$ is a one point set, the claim holds obviously.
Assume Claim \ref{claim1} ($n$) holds, and fix $I_n = (i_1, \ldots, i_n)$ and $I'_n  = (i_1', \ldots, i_n')$.
Then we have a GKM function which coincides with $h$ on $W\ii$.
Subtracting this GKM function from $h$, we may assume $h$ vanishes on $W\ii$.
We give an induction to fill the decomposition \eqref{rem_decomp} of $W\ii[n-1]$ as follows.
For any $k \in [4] \sm I_n$, let $\sigma_k$ denote the reflection associated with $t_k -t_{i_n}$.
Then $\sigma_k$ interchanges $t_k$ and $t_{i_n}$, and for any $w \in W\iii{k}{i'_n}$, $w \sigma_k$ is contained in $W\iii{i_n}{i'_n}$.
By the GKM condition,
$h(w) - h(w\sigma_k) = h(w)$ belongs to the ideal generated by $w(t_{i_n} - t_k) = w(t_{i_n}) - t_{i'_n} = \tau_{i_n}(w) -t_{i'_n}$.
Put $k_0, \ldots, k_{4-n} \in [4]\setminus I_{n-1}$ as $k_0 = i_n$, $k_s < k_t$ for $1\leq s <t$
and $\{ k_0, \ldots, k_{4-n}\} \cup I_{n-1} = [4]$.
\begin{claim}[$t$]
\label{claimt}
If $h$ is a GKM function which vanishes on $\coprod_{s < t} W\iii{k_s}{i'_n}$,
there is a GKM function $G \in R$
such that $h$ coincides with $\prod_{s < t}(\tau_{k_s} - t_{i'_n})G$ on $W\iii{k_t}{i'_n}$.
\end{claim}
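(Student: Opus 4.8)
The plan is to prove Claim~$t$ by induction on $t$, starting from $t=0$. For the base case $t=0$ the product $\prod_{s<0}(\tau_{k_s}-t_{i'_n})$ is empty, so the claim reduces to: any GKM function $h$ (which automatically vanishes on the empty union) coincides on $W\iii{k_0}{i'_n} = W\iii{i_n}{i'_n}$ with some $G \in R$. But $W\iii{i_n}{i'_n}$ is an $(n+1)$-tuple stratum of the form $W\ii[n+1]$ for the tuples $(I_n,i_n)$ and $(I'_n,i'_n)$, so this is exactly Claim~\ref{claim1}$(n+1)$, which we may invoke by the descending induction hypothesis on $n$. Hence the base case is free.

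For the inductive step, suppose Claim~$(t)$ is established and let $h$ be a GKM function vanishing on $\coprod_{s<t+1} W\iii{k_s}{i'_n} = \bigl(\coprod_{s<t} W\iii{k_s}{i'_n}\bigr) \sqcup W\iii{k_t}{i'_n}$. In particular $h$ vanishes on $\coprod_{s<t} W\iii{k_s}{i'_n}$, so by Claim~$(t)$ there is $G \in R$ with $h = \prod_{s<t}(\tau_{k_s}-t_{i'_n})\,G$ on $W\iii{k_t}{i'_n}$. Now I would examine what it means for $h$ to vanish on $W\iii{k_t}{i'_n}$ as well: on that stratum $\tau_{k_t} = t_{i'_n}$, so $\prod_{s<t}(\tau_{k_s}-t_{i'_n})$ need not vanish there and the additional hypothesis forces $G$ itself to satisfy a divisibility condition by $\tau_{k_t}-t_{i'_n}$ — more precisely, since $h$ is a GKM function, the usual edge relation (through the reflection $\sigma_{k_t}$ swapping $t_{k_t}$ and $t_{i_n}$, which moves $W\iii{k_t}{i'_n}$ into $W\iii{i_n}{i'_n}$ where $h$ may be nonzero; or more directly through comparing with the adjacent already-vanished strata) shows that $h - \prod_{s\le t}(\tau_{k_s}-t_{i'_n})\cdot(\text{something})$ can be arranged. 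The cleanest route is: replace $h$ by $h - \prod_{s<t}(\tau_{k_s}-t_{i'_n})\,G$, which vanishes on all of $\coprod_{s\le t}W\iii{k_s}{i'_n}$; restrict to $W\iii{k_t}{i'_n}$ and peel off the factor $\tau_{k_t}-t_{i'_n}$ using that the remaining function is GKM and vanishes on the boundary stratum where $\tau_{k_t}=t_{i'_n}$, obtaining a new GKM function $G' \in R$ (here is where Proposition~\ref{1/2} and the $f^{(\ep)}$-functions enter, to justify that the quotient lies in $R$ and not merely in $R\otimes\Z[\tfrac12]$); then $h = \prod_{s\le t}(\tau_{k_s}-t_{i'_n})\,G'$ on $W\iii{k_t}{i'_n}$, which is Claim~$(t+1)$.

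The main obstacle I anticipate is the division step: showing that when a GKM function vanishes on the stratum $W\iii{k_t}{i'_n}$ together with all the earlier ones, the quotient by the linear factor $\tau_{k_t}-t_{i'_n}$ is still realized by an element of the integral ring $R$, rather than only after inverting $2$. This is precisely the phenomenon controlled by the auxiliary functions $f^{(\ep)}{}^{I_{n-1}}_{i'_n}$ and the half-integral classes $\gamma_j$, and the care taken in Lemmas~\ref{rep} and~\ref{f(n)} and Proposition~\ref{1/2} is exactly what makes the division land back in $R$. The rest — the combinatorics of the filtration \eqref{rem_decomp}, the bookkeeping of which reflection moves which stratum, and assembling the two nested inductions (descending on $n$, ascending on $t$) into a proof that an arbitrary GKM function can be modified to vanish on all of $\re W(Spin(9))$ for each $\ep$, and finally across the three cosets using $\omega$ and $\gamma_4$ — is routine once the divisibility is in hand.

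\begin{proof}[Proof of Claim~$t$]
We argue by induction on $t$, inside the descending induction on $n$ already in progress, so that Claim~\ref{claim1}$(m)$ is available for all $m>n$.

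For $t=0$ the product $\prod_{s<0}(\tau_{k_s}-t_{i'_n})$ is empty, and $W\iii{k_0}{i'_n}=W\iii{i_n}{i'_n}$ is a stratum of the form $W\ii[n+1]$ associated to the tuples $(I_n,i_n)$ and $(I'_n,i'_n)$. Thus the restriction of $h$ to this stratum is a GKM function on $W\ii[n+1]$, and Claim~\ref{claim1}$(n+1)$ yields the desired $G\in R$.

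Assume Claim~$(t)$ and let $h$ be a GKM function vanishing on $\coprod_{s<t+1}W\iii{k_s}{i'_n}$. In particular $h$ vanishes on $\coprod_{s<t}W\iii{k_s}{i'_n}$, so by Claim~$(t)$ there is $G_0\in R$ with $h=\prod_{s<t}(\tau_{k_s}-t_{i'_n})\,G_0$ on $W\iii{k_t}{i'_n}$. Subtracting the GKM function $\prod_{s<t}(\tau_{k_s}-t_{i'_n})\,G_0\in R$ from $h$, we may assume $h$ vanishes on all of $\coprod_{s\le t}W\iii{k_s}{i'_n}$. Now on the stratum $W\iii{k_t}{i'_n}$ one has $\tau_{k_t}=t_{i'_n}$, and the reflection $\sigma_{k_t}$ associated with $t_{k_t}-t_{i_n}$ carries $W\iii{i_n}{i'_n}$ onto $W\iii{k_t}{i'_n}$; by the GKM condition $h(w)-h(w\sigma_{k_t})$ lies in the ideal generated by $w(t_{k_t}-t_{i_n})=\tau_{k_t}(w)-t_{i'_n}$ for $w\in W\iii{i_n}{i'_n}$, and since $h$ vanishes on $W\iii{k_t}{i'_n}$ the restriction of $h$ to $W\iii{i_n}{i'_n}$ is divisible there by $\tau_{k_t}-t_{i'_n}$. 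Together with the vanishing on the other strata, the function $h/(\tau_{k_t}-t_{i'_n})$, defined on $\coprod_{s\le t}W\iii{k_s}{i'_n}$ and on $W\iii{i_n}{i'_n}$, is again a GKM function; applying Proposition~\ref{1/2} (to absorb any apparent denominator $2$ into the classes $\gamma_j$ via the functions $f^{(\ep)}{}^{I}_{i'}$) and then Claim~$(t)$ to this new function, we obtain $G'\in R$ with $h/(\tau_{k_t}-t_{i'_n})=\prod_{s<t}(\tau_{k_s}-t_{i'_n})\,G'$ on $W\iii{k_t}{i'_n}$. Hence $h=\prod_{s\le t}(\tau_{k_s}-t_{i'_n})\,G'$ on $W\iii{k_t}{i'_n}$, and adding back the subtracted term gives $G=\prod_{s<t}(\tau_{k_s}-t_{i'_n})\,G_0+\prod_{s\le t}(\tau_{k_s}-t_{i'_n})\,G'\in R$ as required for Claim~$(t+1)$.
\end{proof}
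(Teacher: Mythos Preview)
Your inductive step is aimed at the wrong stratum. Claim~$(t+1)$ asks you to produce $G\in R$ with
\[
h=\prod_{s\le t}(\tau_{k_s}-t_{i'_n})\,G\quad\text{on }W\iii{k_{t+1}}{i'_n},
\]
but every computation in your inductive step takes place on $W\iii{k_t}{i'_n}$ and $W\iii{k_0}{i'_n}=W\iii{i_n}{i'_n}$. You never evaluate anything on $W\iii{k_{t+1}}{i'_n}$, and your final line ``$h=\prod_{s\le t}(\tau_{k_s}-t_{i'_n})\,G'$ on $W\iii{k_t}{i'_n}$'' is vacuous: on that stratum $\tau_{k_t}=t_{i'_n}$, so both sides are zero. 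The subtraction of $\prod_{s<t}(\tau_{k_s}-t_{i'_n})G_0$ accomplishes nothing either, since by hypothesis $h$ already vanishes on $\coprod_{s\le t}W\iii{k_s}{i'_n}$. And the quotient $h/(\tau_{k_t}-t_{i'_n})$ is not even defined on $W\iii{k_t}{i'_n}$, because the divisor vanishes identically there.

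The paper's argument is not an induction that feeds Claim~$(t)$ into Claim~$(t+1)$; it proves Claim~$(t)$ directly for each $t$. For $w\in W\iii{k_t}{i'_n}$ and each $s<t$, the reflection in $t_{k_s}-t_{k_t}$ carries $w$ into $W\iii{k_s}{i'_n}$, where $h$ vanishes; the GKM condition then gives $h(w)\in(\tau_{k_s}(w)-t_{i'_n})$. These linear forms are pairwise coprime in $H^*(BT)$, so $h(w)$ is divisible by their product, yielding $g_w$. One then checks case by case that $G'(w)=g_w$ satisfies the GKM condition on $W\iii{k_t}{i'_n}$, and the descending induction on $n$ (your base case, correctly identified) furnishes $G\in R$ agreeing with $G'$ there. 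Finally, your appeal to Proposition~\ref{1/2} is misplaced: no factor of $\tfrac12$ arises in this half of the decomposition~\eqref{rem_decomp}; the functions $f^{(\ep)}{}^{I_{n-1}}_{i'_n}$ and Proposition~\ref{1/2} enter only when one passes to the strata $W\iii{k}{-i'_n}$ in the subsequent Claim~$(k')$.
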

We show this claim by induction on $t$ ($0 \leq t \leq 4-n$).
Without loss of generality,
we may suppose that $I_{n-1} = (1, \ldots, n-1)$ and $k_0 = i_n = n$, $k_1 = n+1, \ldots, k_{4-n} = 4$.
We rephrase Claim \ref{claimt} ($t$) as:
\addtocounter{claim}{-1}
\begin{claim}[$k$]
If $h$ vanishes on $\coprod_{0 \leq l < k} W\iii{n+l}{i'_n}$,
there is a GKM function $G \in R$
such that $h$ coincides with $\prod_{0\leq l < k}(\tau_{n+l} - t_{i'_n})G$ on $W\iii{n+k}{i'_n}$.
\end{claim}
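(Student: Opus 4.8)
The plan is to prove the claim directly, treating the values $k=0,1,\dots,4-n$ in turn and in each case producing the output function $G$ by appealing to Claim~\ref{claim1}~($n$), which is available because we are in the midst of the descending induction on $n$. For $k=0$ the product over the empty index set is $1$, and after the normalization $I_{n-1}=(1,\dots,n-1)$, $k_0=i_n=n$ the set $W\iii{n}{i'_n}$ is of the form $W\ii$ with $I_n=(I_{n-1},n)$ and $I'_n=(I'_{n-1},i'_n)$; the restriction of $h$ to it satisfies the GKM condition, so Claim~\ref{claim1}~($n$) gives $G\in R$ coinciding with $h$ there. Once the claim is known for a given $k$, I would subtract $\bigl(\prod_{l<k}(\tau_{n+l}-t_{i'_n})\bigr)G$ from $h$: the $l$-th factor vanishes on $W\iii{n+l}{i'_n}$, so $h$ stays zero on $\coprod_{l<k}W\iii{n+l}{i'_n}$ and becomes zero on $W\iii{n+k}{i'_n}$ as well, which is exactly the hypothesis needed to move on to $k+1$.

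So fix $k\geq 1$ and assume $h$ vanishes on $\coprod_{l<k}W\iii{n+l}{i'_n}$; the first step is to show that $P(w):=\prod_{l=0}^{k-1}(\tau_{n+l}(w)-t_{i'_n})$ divides $h(w)$ in $H^*(BT)$ for every $w\in W\iii{n+k}{i'_n}$. For $0\leq l<k$ let $\sigma_l$ be the reflection attached to the root $t_{n+l}-t_{n+k}$ of $Spin(9)$; it interchanges these two coordinates and fixes the rest, so $w\sigma_l$ still sends $t_1,\dots,t_{n-1}$ to $t_{i'_1},\dots,t_{i'_{n-1}}$ and sends $t_{n+l}$ to $w(t_{n+k})=t_{i'_n}$, i.e.\ $w\sigma_l\in W\iii{n+l}{i'_n}$, where $h$ is zero; hence the GKM condition gives $h(w)\in\bigl(\tau_{n+l}(w)-t_{i'_n}\bigr)$ for every $l<k$. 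Since $w$ realizes the prescribed images, the values $\tau_{n+l}(w)=w(t_{n+l})$ are $\pm t_{c_l}$ for pairwise distinct $c_l\in\{1,2,3,4\}\setminus\{|i'_1|,\dots,|i'_n|\}$, so the linear forms $\tau_{n+l}(w)-t_{i'_n}$ are pairwise non-associate irreducibles in $H^*(BT)$. Working in the presentation $H^*(BT)\cong\Z[\gamma,t_2,t_3,t_4]$ (obtained by solving $2\gamma=e_1(t)$ for $t_1$), a unique factorization domain, the intersection $\bigcap_{l<k}\bigl(\tau_{n+l}(w)-t_{i'_n}\bigr)$ equals $\bigl(P(w)\bigr)$, so $G:=h/P$ is a well-defined $H^*(BT)$-valued function on $W\iii{n+k}{i'_n}=W\ii$.

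It then remains to verify that $G$ satisfies the GKM condition on $W\ii$, after which Claim~\ref{claim1}~($n$) yields $\widetilde G\in R$ coinciding with $G$ there, and hence $h=P\widetilde G=\bigl(\prod_{l<k}(\tau_{n+l}-t_{i'_n})\bigr)\widetilde G$ on $W\iii{n+k}{i'_n}$, which is the assertion. For an internal edge $ww'$, $w'=w\sigma_\alpha$, the reflection $\sigma_\alpha$ fixes all the prescribed coordinates, so $\alpha$ involves only indices outside $I_n$; consequently $w\alpha$ has the shape $\pm t_c\pm t_{c'}$ or $\pm t_c$ with $c,c'\neq|i'_n|$, whereas every factor of $P(w)$ genuinely involves $t_{|i'_n|}$, and an elementary check in $\Z[\gamma,t_2,t_3,t_4]$ shows $w\alpha$ is coprime to $P(w)$. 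As $h$ and $P\in R$ are both GKM functions, $P(w)\bigl(G(w)-G(w')\bigr)=h(w)-P(w)G(w')\equiv h(w)-P(w')G(w')=h(w)-h(w')\equiv 0\pmod{w\alpha}$, and coprimality forces $w\alpha\mid G(w)-G(w')$.

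The step I expect to be the main obstacle is the coprimality bookkeeping inside $H^*(BT)$: because of the relation $2\gamma=e_1(t)$ this ring is not a polynomial ring in $t_1,\dots,t_4$, and the case $|i'_n|=1$ --- where $t_{i'_n}$ itself secretly involves $\gamma$ --- has to be argued separately; once one passes to the presentation $\Z[\gamma,t_2,t_3,t_4]$ and checks that all the linear forms in sight are pairwise non-associate irreducibles, both the divisibility of $h$ by $P$ and the GKM-condition computation for $G$ go through, but this is where genuine care is needed. Finally, running the whole procedure for $k=0,\dots,4-n$ and then once more with $i'_n$ replaced by $-i'_n$ kills $h$ on all of $W\ii[n-1]$, which is what is needed to complete the inductive step for Claim~\ref{claim1}.
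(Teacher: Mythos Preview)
Your argument is correct and matches the paper's approach: use the GKM edges to the already-vanished strata to get $P(w)\mid h(w)$, check that $G'=h/P$ satisfies the GKM condition on $W\iii{n+k}{i'_n}$, then invoke Claim~\ref{claim1}~($n$). Your GKM check for $G'$ is in fact slicker than the paper's explicit case analysis --- by observing that $P\in R$ is itself a GKM function you handle every internal root $\alpha$ uniformly, whereas the paper only writes out the case $\alpha=t_i-t_j$. One caveat on your closing remark: the $-i'_n$ half of the decomposition~\eqref{rem_decomp} is \emph{not} handled by merely replacing $i'_n$ with $-i'_n$; in that step the full product $\prod_l(\tau_{n+l}-t_{i'_n})$ must be traded for $2f^{(0)}{}^{I_{n-1}}_{i'_n}$ via Proposition~\ref{1/2} in order to stay in $R$, but this lies outside Claim~($k$) itself.
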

Obviously $\prod_{0\leq l < k}(\tau_{n+l} - t_{i'_n})$ vanishes on $\coprod_{0\leq l < k} W\iii{n+l}{i'_n}$.
For $w \in W\iii{n+k}{i'_n}$, by the GKM condition, there is an element $g_w \in H^*(BT)$ such that
\[
h(w) = \Bigl(\prod_{0 \leq l < k}(\tau_{n+l} - t_{i'_n})(w)\Bigr)g_w.
\]
One can verify that a function $G' \colon W\iii{n+k}{i'_n} \to H^*(BT)$ given by
\[
G'(w)=g_w
\]
satisfies the GKM condition on $W\iii{n+k}{i'_n}$ as follows.
Assume that two vertices $w$, $w' \in W\iii{n+k}{i'_n}$ of $\F_4$ satisfy $w' = w\sigma_\alpha$ for some positive root $\alpha$.
Then $\alpha = t_i -t_j$, where $i < j$ and $i$, $j \in \{ m \in \Z \suchthat n \leq m \leq n+k-1 \text{ or } n+k+1 \leq m \leq 4\}$.
When $i < j < n+k$ or $n+k <i < j$, the GKM condition says
\begin{align*}
 h(w) -h(w')
=&\ \Bigl(\prod_{0\leq l < k} (w(t_{n+l}) - t_{i'_n})\Bigr)G'(w) - \Bigl(\prod_{0\leq l < k} (w\sigma_{t_i-t_j}(t_{n+l}) - t_{i'_n})\Bigr)G'(w')\\
=&\ \Bigl(\prod_{0\leq l < k} (w(t_{n+l}) - t_{i'_n})\Bigr)(G'(w)-G'(w'))
\end{align*}
belongs to the ideal $(w(t_i-t_j))$.
Since $w(t_{n+l})-t_{i'_n}$ and $w(t_i -t_j)$ are relatively prime,
$G'(w)-G'(w')$ also belongs to the ideal $(w(t_i-t_j))$.
When $i < n+k < j$, the GKM condition says
\begin{align*}
 &\ h(w) -h(w')\\
=&\ \Bigl(\prod_{0\leq l < k} (w(t_{n+l}) - t_{i'_n})\Bigr)G'(w) - \Bigl(\prod_{0\leq l < k} (w\sigma_{t_i-t_j}(t_{n+l}) - t_{i'_n})\Bigr)G'(w')\\
=&\ \Bigl(\prod_{0\leq l < k,\ l\neq i} (w(t_{n+l}) - t_{i'_n})\Bigr)\Bigl((w(t_{i})-t_{i'_n})(G'(w)-G'(w')) +(w(t_{i})-w(t_{j}))G'(w')\Bigr)
\end{align*}
belongs to the ideal $(w(t_i-t_j))$.
Since $w(t_{n+l})-t_{i'_n}$ and $w(t_i -t_j)$ are relatively prime,
$G'(w)-G'(w')$ also belongs to the ideal $(w(t_i-t_j))$.
Hence the function $G'$ satisfies the GKM condition on $W\iii{n+k}{i'_n}$.

By (descending) induction on $n$ there is a GKM function $G \in R$
such that $G$ and $G'$ coincide on $W\iii{n+k}{i'_n}$.
Then 
\[
h - \Bigl(\prod_{0 \leq l < k}(\tau_{n+l} - t_{i'_n})\Bigr)G = 0 \quad {\rm on} \coprod_{0 \leq l \leq k} W\iii{n+l}{i'_n}.
\]
Therefore the induction on $k$ proceeds.

Next we fill the other half of the decomposition \eqref{rem_decomp}.
Note that when $I_{n-1}=(1, \ldots, n-1)$,
\[
f^{(0)}{}^{I_{n-1}}_{i'_n} = \frac{1}{2}\prod_{0\leq l \leq 4-n}(\tau_{n+l} - t_{i'_n}).
\]
Let $0 \leq k' \leq 4-n$.
\begin{claim}[$k'$]
If $h$ vanishes on $\coprod_{0 \leq l \leq 4-n} W\iii{n+l}{i'_n} \sqcup \coprod_{0\leq l < k'}W\iii{n+l}{-i'_n}$.
There is a GKM function $G \in R$
such that $h$ coincides with
$f^{(0)}{}^{I_{n-1}}_{i'_n}\prod_{0\leq l < k'}(\tau_{n+l} + t_{i'_n})G$ on $W\iii{n+k'}{-i'_n}$.
\end{claim}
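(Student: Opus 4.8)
The plan is to imitate the proof of the companion Claim~($k$) for the first family of \eqref{rem_decomp}. Set $D := f^{(0)}{}^{I_{n-1}}_{i'_n}\prod_{0\le l<k'}(\tau_{n+l}+t_{i'_n})$. I will (i) check that $D$ vanishes on the set where $h$ is assumed to vanish; (ii) show $D(w)$ divides $h(w)$ in $H^*(BT)$ for every $w\in W\iii{n+k'}{-i'_n}$, and let $G'(w)$ be the quotient; (iii) verify that $G'$ is a GKM function on $W\iii{n+k'}{-i'_n}$; and (iv) extend $G'$ to a GKM function $G\in R$ by the descending induction hypothesis Claim~\ref{claim1}~($n$), which applies because $W\iii{n+k'}{-i'_n}$ is a set of the form $W\ii[n]$, namely the one attached to the $n$-tuples $(I_{n-1},n+k')$ and $(I'_{n-1},-i'_n)$. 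For step (i): with the normalization $I_{n-1}=(1,\dots,n-1)$ one has $f^{(0)}{}^{I_{n-1}}_{i'_n}=\frac12\prod_{l=0}^{4-n}(\tau_{n+l}-t_{i'_n})$, whose $l$-th factor vanishes on $W\iii{n+l}{i'_n}$, while $\prod_{0\le l<k'}(\tau_{n+l}+t_{i'_n})$ vanishes on $W\iii{n+l}{-i'_n}$ for $l<k'$ (there $\tau_{n+l}=-t_{i'_n}$); hence $D$ vanishes on all of $\coprod_{0\le l\le 4-n}W\iii{n+l}{i'_n}\sqcup\coprod_{0\le l<k'}W\iii{n+l}{-i'_n}$.

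For step (ii), fix $w\in W\iii{n+k'}{-i'_n}$. Since $\tau_{n+k'}(w)=-t_{i'_n}$,
\[
D(w)=-t_{i'_n}\prod_{\substack{0\le l\le 4-n\\ l\ne k'}}(w(t_{n+l})-t_{i'_n})\prod_{0\le l<k'}(w(t_{n+l})+t_{i'_n}).
\]
For each linear factor on the right I will exhibit an edge of $\F_4$ issuing from $w$ that ends inside the vanishing locus of $h$, read off its label, and invoke the GKM condition for $h$: the short-root reflection $\sigma_{t_{n+k'}}$ connects $w$ to $W\iii{n+k'}{i'_n}$ with label $\pm t_{i'_n}$; for $l\ne k'$ the long-root reflection $\sigma_{t_{n+l}+t_{n+k'}}$ connects $w$ to $W\iii{n+l}{i'_n}$ with label $\pm(w(t_{n+l})-t_{i'_n})$; and for $l<k'$ the long-root reflection $\sigma_{t_{n+l}-t_{n+k'}}$ connects $w$ to $W\iii{n+l}{-i'_n}$ with label $\pm(w(t_{n+l})+t_{i'_n})$ (this target lies in the vanishing locus because $l<k'$). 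As $w$ is a signed permutation of $t_1,\dots,t_4$, these linear forms are pairwise coprime in $H^*(BT)$ — which, since $2\gamma=e_1(t)$, is the polynomial ring $\Z[\gamma,t_2,t_3,t_4]$, so each factor is a primitive linear polynomial, hence prime, and no two are associate — and their product equals $D(w)$ up to a unit. Therefore $h(w)=D(w)g_w$ for a unique $g_w\in H^*(BT)$, and I set $G'(w):=g_w$.

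Step (iii) is where I expect essentially all of the effort. An edge of $\F_4$ inside $W\iii{n+k'}{-i'_n}$ has the form $w'=w\sigma_\alpha$ with $\alpha$ a root of $F_4$ orthogonal to $t_1,\dots,t_{n-1}$ and to $t_{n+k'}$, that is, $\alpha\in\{\pm(t_i\pm t_j),\pm t_m\}$ with all indices in $\{n,\dots,4\}\setminus\{n+k'\}$. For each such $\alpha$ I would write $h(w)-h(w')=D(w)G'(w)-D(w')G'(w')$, observe that $\sigma_\alpha$ merely permutes and changes the signs of the linear factors displayed above (so $D(w)$ and $D(w')$ agree in every factor $\sigma_\alpha$ does not touch), cancel the common part, and rearrange the surviving bracket into the form $(\text{a linear form in the }w(t))\cdot(G'(w)-G'(w'))+(w\alpha)\cdot G'(w')$, exactly as at the end of the proof of Claim~($k$); coprimality of the cancelled factors and of that linear form with $w\alpha$ then forces $G'(w)-G'(w')\in(w\alpha)$. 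Relative to Claim~($k$) the new features are purely bookkeeping: three root types instead of one, and within each long-root case a split according to whether the two indices are both $<k'$, both $>k'$, or straddle $k'$ — the straddling case being the sole one in which $D(w)\ne D(w')$ and the bracket must genuinely be rearranged — but each subcase is a one-line computation.

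Finally (step (iv)), Claim~\ref{claim1}~($n$) applied to the GKM function $G'$ yields $G\in R$ with $G=G'$ on $W\iii{n+k'}{-i'_n}$, and then $h=DG$ there by construction, which is the assertion of Claim~($k'$). Moreover $h-DG$ still vanishes on $\coprod_{0\le l\le 4-n}W\iii{n+l}{i'_n}\sqcup\coprod_{0\le l<k'}W\iii{n+l}{-i'_n}$ since $D$ does, so $h-DG$ vanishes on $\coprod_{0\le l\le 4-n}W\iii{n+l}{i'_n}\sqcup\coprod_{0\le l\le k'}W\iii{n+l}{-i'_n}$, which advances the induction on $k'$ and, at $k'=4-n$, exhausts the decomposition~\eqref{rem_decomp} — completing the inductive step of Claim~\ref{claim1}. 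For the subtraction to remain inside $R$ one first replaces $f^{(0)}{}^{I_{n-1}}_{i'_n}$ on $W\ii[n-1]$ by the polynomial in $\gamma_1,\dots,\gamma_4$ over $H^*(BT)$ supplied by Proposition~\ref{1/2}.
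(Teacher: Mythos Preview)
Your proof is correct and follows essentially the same strategy as the paper, which dispatches step~(iii) with the phrase ``similarly as above'' and otherwise proceeds exactly as you do. One minor correction to your bookkeeping in step~(iii): the straddling $t_i-t_j$ case is not the \emph{sole} one with $D(w)\ne D(w')$ --- the short root $t_m$ with $m-n>k'$ and the long root $t_i+t_j$ with at least one index exceeding $n+k'$ also alter $D$ --- but your rearrangement into $(\text{linear form})\cdot(G'(w)-G'(w'))+(w\alpha)\cdot G'(w')$ handles each of these identically, so no change to the argument is needed.
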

We show this claim by induction on $k'$.
For $w \in W\iii{n+k'}{-i'_n}$, by the GKM condition, $h(w)$ belongs to the ideal generated by the following elements of $H^*(\F_4)$.
\begin{align*}
w(t_{n+l}-t_{n+k'})&=w(t_{n+l})+t_{i'_n}&& \text{ for } 0 \leq l < k'\\
w(t_{n+l}+t_{n+k'})&=w(t_{n+l})-t_{i'_n}&& \text{ for } 0 \leq l \leq 4-n,\ l \neq k'\\
w(t_{n+k'}) &= -t_{i'_n}&&
\end{align*}
For $w \in W\iii{n+k'}{-i'_n}$, by the equation \eqref{f^(ep)}, there is an element $g_w \in H^*(BT)$ such that
\[
h(w) = f^{(0)}{}^{I_{n-1}}_{i'_n}(w)\Bigl(\prod_{0\leq l < k'}(\tau_{n+l} + t_{i'_n})(w)\Bigr)g_w.
\]
One can verify that a function $G'$ given by $G'(w)=g_w$ satisfies the GKM condition on $W\iii{n+k'}{-i'_n}$ similarly as above.
By (descending) induction on $n$ there is a GKM function $G \in R$
such that $G$ and $G'$ coincide on $W\iii{n+k'}{-i'_n}$.
Then 
\begin{align*}
h - f^{(0)}{}^{I_{n-1}}_{i'_n} \Bigl(\prod_{0\leq l < k'}(\tau_{n+l} + t_{i'_n})\Bigr) G = 0\\
{\rm on} \coprod_{0\leq l \leq 4-n}W\iii{n+l}{i'_n} \sqcup \coprod_{0\leq l \leq k'}W\iii{n+l}{-i'_n}.
\end{align*}
Therefore the induction on $k'$ proceeds.
By Proposition \ref{1/2} the function
\[
f^{(0)}{}^{I_{n-1}}_{i'_n}=\frac{1}{2}\prod_{0\leq l \leq 4-n} (\tau_{n+l} -t_{i'_n})
\]
can be replaced by a polynomial in $\gamma_j$'s $(1 \leq j \leq 4)$ over $H^*(BT)$.
Therefore the (descending) induction on $n$ proceeds,
and we may assume $h$ vanishes on $W(Spin(9))=W \sqcup \kappa W$.

Next we show that for a GKM function $h$ which vanishes on $W(Spin(9))$,
there is a GKM function $G \in R$
such that $h -\omega G = 0$ on $W(Spin(9)) \sqcup \rho W(Spin(9))$,
where $\omega$ vanishes on $W(Spin(9))$.
Recall that the schematic diagram \eqref{sch. diag. F_4} says that
each $w \in \rho W \sqcup \rho \kappa W$ are adjacent to $4$ vertices of $W \sqcup \kappa W$,
and the labels of these edges are $\rho^2 t_i$ $(1 \leq i \leq 4)$ and different each other.
The GKM condition says that for $w \in \rho W(Spin(9))$,
$h(w)$ belongs to the ideal $(\rho^2 t_i\suchthat i = 1,2,3,4)$.
For $w \in \rho W(Spin(9))$, there is an element $g_w \in H^*(BT)$ such that
\[
h(w) = -e_4(\rho^2 t)g_w = \omega(w)g_w.
\]
It is obvious that a function $G'$ given by $G'(w)=g_w$ satisfies the GKM condition on $\rho W(Spin(9))$,
since the edges in the GKM subgraph induced by $\rho W(Spin(9))$ have the long roots or $\rho t_i$ as their labels
and the all positive roots of $F_4$ are relatively prime in $H^*(BT)$.
Then we claim that there is a GKM function $G$ such that $G=G'$ on $\rho W(Spin(9))$.
This claim is proved similarly as above, changing $W\ii$ to $\rho W\ii$, $\tau_{k_s}-t_{i'_n}$ to $\tau_{k_s}-\rho t_{i'_n}$,
and $f^{(0)}{}^{I_{n-1}}_{i'_n}$ to $f^{(1)}{}^{I_{n-1}}_{i'_n}$.

Finally we show that for a GKM function $h$ which vanishes on $W(Spin(9))\sqcup \rho W(Spin(9))$,
there is a GKM function $G \in R$
such that $h - \omega(\omega+e_4(\rho^2 t))G = 0$ as a GKM function on the whole $W(F_4)$,
where $\omega+e_4(\rho^2 t)$ vanishes on $\rho W(Spin(9))$.
It is proved similarly as above that
for $w \in \rho^2 W(Spin(9))$, $h(w)$ belongs to the ideal $(t_i, \rho t_i \suchthat i = 1,2,3,4)$.
For $w \in \rho^2 W(Spin(9))$, there is an element $g_w \in H^*(BT)$ such that
\[
h(w) = -e_4(\rho t)e_4(t)g_w = \omega(w)(\omega(w)+e_4(\rho^2 t))g_w,
\]
where the latter equality is due to the relation \eqref{id}.
Then we claim that a function $G'$ given by $G'(w)=g_w$ satisfies the GKM condition on $\rho^2 W(Spin(9))$,
and that there is a GKM function $G$ such that $G=G'$ on $\rho^2 W(Spin(9))$.
This claim is proved similarly as above, changing $W\ii$ to $\rho^2 W\ii$, $\tau_{k_s}-t_{i'_n}$ to $\tau_{k_s}-\rho^2 t_{i'_n}$,
and $f^{(0)}{}^{I_{n-1}}_{i'_n}$ to $f^{(2)}{}^{I_{n-1}}_{i'_n}$.
The proof is completed.
\end{proof}

\section{Proof of Proposition \ref{relations}}
\label{pf of Prop relations}

We prove Proposition \ref{relations} in the similar way of \cite[Proof of Lemma 5.5]{FIM}.

\begin{proof}[Proof of Proposition \ref{relations}]
The relations \eqref{gamma}, \eqref{R_1}, \eqref{R_2}, \eqref{R_3}, and \eqref{R_4} hold obviously by definition,
and the relation \eqref{omega} holds by \eqref{omega(w)}.
To show that \eqref{r_2}, \eqref{r_4}, \eqref{r_6}, and \eqref{r_8} hold,
we claim that the following relations hold in $H^*_T(\F_4)$.
\begin{align}
&e_1(\tau^2) - e_1(t^2) = 0			\label{e1}\\
&e_2(\tau^2) - e_2(t^2) -6\omega = 0	\label{e2}\\
&e_3(\tau^2) - e_3(t^2) -e_1(t^2)\omega = 0	\label{e3}\\
&e_4(\tau^2) - e_4(t^2) +3\omega^2 -2(e_4(\rho t) -e_4(\rho^2 t))\omega = 0	\label{e4}
\end{align}
The left-hand side functions of these equations are constant on each $\re W(Spin(9))$ for $\ep = 0,1,2$.
Calculations of each values on $\re W(Spin(9))$ with \eqref{id} show that the equations \eqref{e1}, \eqref{e2}, \eqref{e3}, and \eqref{e4} hold.

We show that \eqref{e1}, \eqref{e2}, \eqref{e3}, and \eqref{e4} are divisible by $4$
to deduce \eqref{r_2}, \eqref{r_4}, \eqref{r_6}, and \eqref{r_8}.
Let $x$ be an indeterminate and
put $X = -6\omega x^4 +e_1(t^2)\omega x^6 +(3\omega^2 -2(e_4(\rho t) -e_4(\rho^2 t))\omega)x^8$.
It follows from \eqref{e1}, \eqref{e2}, \eqref{e3}, and \eqref{e4} that
\begin{align*}
0&=\prod_{i=1}^4 (1-\tau_i^2 x^2) - \prod_{i=1}^4 (1-t_i^2 x^2) +X \\
 &=\sum_{k=0}^4(1+(-1)^k e_k(\tau)x^k)\sum_{k=0}^4(1+e_k(\tau)x^k) - \sum_{k=0}^4(1+(-1)^k e_k(t)x^k)\sum_{k=0}^4(1+e_k(t)x^k)+X.
\end{align*}
We can erase $e_k(\tau)$ by the relation \eqref{R_1}, \eqref{R_2}, \eqref{R_3}, and \eqref{R_4},
and obtain
\begin{align*}
 & 4\sum_{k=1}^3 (-1)^k \gamma_k^2 x^{2k} -8\gamma_1 \gamma_3 x^4 +4\sum_{k=1}^3 \sum_{i=n_k}^{m_k} (-1)^i \gamma_i e_{2k-i}(t) x^{2k}\\
 & +2(2\gamma_4 +\omega)x^4 +4\gamma_2(e_4(t) +2\gamma_4 +\omega)x^6 +2e_2(t)(2\gamma_4 +\omega)x^6 +(2e_4(t) +2\gamma_4 +\omega)(2\gamma_4 +\omega)x^8 +X,
\end{align*}
where $n_k= \max\{ 1, 2k-3 \}$ and $m_k = \min\{ 3, 2k \}$.
This calculation is similar to the calculation in \cite[Proof of Lemma 5.5]{FIM},
but note that $\gamma_4 \neq \frac{1}{2}(e_4(\tau)-e_4(t))$.
Then comparing the coefficients, we obtain
\begin{align*}
0 &= -4\gamma_1^2 +4(-\gamma_1e_1(t) +\gamma_2) = 4 \sum_{j=1}^{2} (-1)^j \gamma_j(\gamma_{2-j} +e_{2-j}(t)),\\
0 &= 4\gamma_2^2 -8\gamma_1\gamma_3 +4(-\gamma_1e_3(t) +\gamma_2e_2(t) -\gamma_3e_1(t)) +4\gamma_4 -4\omega \\
  &= 4\biggr(\sum_{j=1}^{4} (-1)^j \gamma_j(\gamma_{4-j} +e_{4-j}(t)) -\omega \biggl),\\
0 &= -4\gamma_3^2 -4\gamma_3e_3(t) +4\gamma_2(e_4(t) +2\gamma_4 +\omega) +2e_2(t)(2\gamma_4 + \omega) +(e_1(t)^2 -2e_2(t))\omega \\
  &= 4\biggr( \sum_{j=2}^{4} (-1)^j \gamma_j(\gamma_{6-j} +e_{6-j}(t)) +(\gamma_2 + \gamma^2) \omega \biggl), \\
0 &= (2e_4(t) +2\gamma_4 +\omega)(2\gamma_4 +\omega) +(3\omega^2 -2(e_4(\rho t) -e_4(\rho^2 t))\omega) \\
  &= 4\Bigr( \gamma_4(\gamma_{4} +e_{4}(t)) +\omega^2 + (\gamma_4 -e_4(\rho t))\omega \Bigl). \\
\end{align*}
Regarding GKM functions as elements of ${\rm Map}(W(G),H^*(BT)\otimes \Q)$,
we can divide them by $4$ to obtain
\begin{align*}
& 0 = \sum_{j=1}^{2} (-1)^j \gamma_j(\gamma_{2-j} +e_{2-j}(t)), 						&& 0 = \sum_{j=1}^{4} (-1)^j \gamma_j(\gamma_{4-j} +e_{4-j}(t)) -\omega,\\
& 0 = \sum_{j=2}^{4} (-1)^j \gamma_j(\gamma_{6-j} +e_{6-j}(t)) +(\gamma_2 + \gamma^2) \omega,	&& 0 = \gamma_4(\gamma_{4} +e_{4}(t)) +\omega^2 + (\gamma_4 -e_4(\rho t))\omega.
\end{align*}
Since the right-hand sides of these equations remain to be polynomials in $H^*(BT)$-valued GKM functions over $\Z$,
these equations hold in $H^*(\F_4) \subset {\rm Map}(W(G),H^*(BT))$.
\end{proof}

\section{Proof of Lemma \ref{free}}
\label{pf of Lemma free}

We will prove Lemma \ref{free} by the argument of regular sequences.

\begin{dfn}
A sequence $a_1, \ldots a_n$ of elements of a ring $R$ is called regular
if, for any $i$, $a_i$ is not a zero divisor in $R/(a_1,\ldots,a_{i-1})$.
\end{dfn}

The following theorems and propositions are useful.
Proposition \ref{a+b} and \ref{ai} are obvious by definition.
\begin{prop}
\label{a+b}
If $a_1, \ldots, a_n$ is a regular sequence,
then so is $a_1, \ldots, a_{i-1}, a_i + b, a_{i+1}, \ldots, a_n$ for $1 \leq i \leq n$ and any $b \in ( a_1, \ldots, a_{i-1} )$.
\end{prop}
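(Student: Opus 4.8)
The plan is to reduce the whole statement to one observation about the ideals involved. First I would note that since $b \in (a_1, \ldots, a_{i-1})$, the image of $b$ in the quotient $R/(a_1, \ldots, a_{i-1})$ is zero, so the images of $a_i$ and $a_i + b$ in that quotient coincide. Consequently, as ideals of $R$,
\[
(a_1, \ldots, a_{i-1}, a_i + b, a_{i+1}, \ldots, a_k) = (a_1, \ldots, a_k) \qquad \text{for every } k \geq i,
\]
and of course $(a_1, \ldots, a_k)$ is unchanged for $k < i$.

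Next I would verify the non-zero-divisor condition term by term for the modified sequence $a_1, \ldots, a_{i-1}, a_i + b, a_{i+1}, \ldots, a_n$. For an index $k < i$ nothing has been altered, so the required condition is literally the original one. For the $i$-th term the condition asks that $a_i + b$ be a non-zero-divisor in $R/(a_1, \ldots, a_{i-1})$; but in that ring $a_i + b = a_i$, which is a non-zero-divisor by hypothesis. For an index $k > i$ the condition asks that $a_k$ be a non-zero-divisor in $R/(a_1, \ldots, a_{i-1}, a_i + b, a_{i+1}, \ldots, a_{k-1})$, and by the displayed ideal identity this ring is exactly $R/(a_1, \ldots, a_{k-1})$, where $a_k$ is a non-zero-divisor by hypothesis. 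These three cases exhaust all indices, so the modified sequence is regular.

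I expect essentially no obstacle here; the only point deserving a moment's attention is that $b$ is assumed to lie in the ideal generated by the \emph{strictly} earlier terms $a_1, \ldots, a_{i-1}$, so in particular the perturbation $b$ does not involve $a_i$ itself — this is precisely what forces $(a_1, \ldots, a_{i-1}, a_i)$ and $(a_1, \ldots, a_{i-1}, a_i + b)$ to be the same ideal. Accordingly the argument is only a few lines, in keeping with the surrounding remark that the statement is \emph{obvious by definition}.
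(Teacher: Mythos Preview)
Your argument is correct and is exactly the unwinding of the definition that the paper has in mind; the paper itself gives no proof beyond the remark that the statement is ``obvious by definition,'' and your three-case check is the natural way to make that remark precise.
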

\begin{prop}
\label{ai}
If $a_1, \ldots, a_n$ is a regular sequence,
then so is $a_1, \ldots, a_{i-1}, a_{i+1}, \ldots, a_n$ for $1 \leq i \leq n$.
\end{prop}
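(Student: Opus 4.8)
The plan is to deduce the removal statement from the fact that a regular sequence of homogeneous elements of positive degree can be \emph{permuted}; this is exactly the setting in which the proposition is applied, since in Lemma \ref{free} every $a_k$ is a homogeneous element of positive degree in the $\N$-graded ring $R=\Z[t_i,\gamma,\tau_i,\gamma_i,\omega]/(\cdots)$ with $R_0=\Z$. Once permutability is available, the proposition is immediate: given a regular sequence $a_1,\dots,a_n$, move $a_i$ to the last position to see that $a_1,\dots,a_{i-1},a_{i+1},\dots,a_n,a_i$ is again regular, and then discard the final term, using that any initial segment of a regular sequence is regular straight from the definition. The special case $i=n$ is already obvious from the definition alone, but the interior cases $i<n$ genuinely require the reordering.

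So the core of the argument is permutability, which I would prove in the standard two steps. First I would reduce an arbitrary permutation to a product of adjacent transpositions, and reduce each adjacent transposition to the two-element case by passing to the quotient $\bar R=R/(a_1,\dots,a_{j-1})$: there $\bar a_j,\bar a_{j+1}$ is a regular sequence of homogeneous positive-degree elements, the ideal $(a_1,\dots,a_{j+1})$ does not see the order of $a_j,a_{j+1}$, so $R/(a_1,\dots,a_{j+1})$ is unchanged and the tail $a_{j+2},\dots,a_n$ stays regular automatically. Second I would settle the two-element case: if $a,b$ are homogeneous of positive degree with $a$ a non-zero-divisor and $b$ a non-zero-divisor modulo $(a)$, then $b$ is a non-zero-divisor and $a$ is a non-zero-divisor modulo $(b)$. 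The second half is direct: from $ay=bz$ one gets $z\in(a)$ because $b$ is a non-zero-divisor modulo $(a)$, writes $z=az_1$, and cancels $a$ to reach $y=bz_1\in(b)$.

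The main obstacle is the first half of the two-element case, namely that $b$ is a non-zero-divisor of $R$ outright. From $bx=0$ I get $x\in(a)$ (since $b$ is a non-zero-divisor modulo $(a)$), and cancelling the non-zero-divisor $a$ gives $x=ax_1$ with $bx_1=0$; iterating yields $x=a^k x_k$ for every $k$, i.e. $x\in\bigcap_{k\ge 1}(a^k)$. This intersection vanishes precisely because $\deg a=d>0$: the degree-$m$ homogeneous component of $a^k x_k$ equals $a^k$ times the degree-$(m-kd)$ component of $x_k$, which is $0$ once $kd>m$ as $R$ carries no negative degrees, so every homogeneous component of $x$ is $0$ and $x=0$. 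This descent is the sole place the grading enters, and it is exactly what fails without it: the inhomogeneous sequence $X,\,Y(1-X),\,Z(1-X)$ in $\Z[X,Y,Z]$ is regular, yet dropping $X$ leaves the non-regular pair $Y(1-X),\,Z(1-X)$. I would therefore carry out the proof for homogeneous elements of positive degree, which is the setting in which the proposition is used here, and present the descent above as the one substantive point.
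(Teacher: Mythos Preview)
Your analysis is more careful than the paper's. The paper offers no proof at all: it simply declares Propositions~\ref{a+b} and~\ref{ai} to be ``obvious by definition.'' For Proposition~\ref{a+b} this is fair, but for Proposition~\ref{ai} as stated---over an arbitrary ring with no homogeneity hypothesis---the claim is \emph{false}, and your counterexample $X,\,Y(1-X),\,Z(1-X)$ in $\Z[X,Y,Z]$ demonstrates it cleanly. So you have caught a genuine inaccuracy in the paper's formulation.

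Your remedy is correct: restrict to homogeneous elements of positive degree in a non-negatively graded ring, which is exactly the setting of Lemma~\ref{free}, and deduce the removal statement from permutability. Your two-element swap argument is standard and sound, including the degree descent showing $\bigcap_k (a^k)=0$ when $\deg a>0$. One economy available to you: the paper already records the permutation result as Theorem~\ref{matsu2}, so you could simply invoke it rather than reprove it---permute $a_i$ to the end and drop it, which \emph{is} obvious by definition. That is presumably the one-line argument the author had in mind, with the graded Noetherian hypotheses of Theorem~\ref{matsu2} tacitly in force throughout; they just never made it into the statement of Proposition~\ref{ai}.
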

\begin{thm}[{\cite[Theorem 16.1]{M}}]
\label{matsu}
If $a_1, \ldots, a_n$ is a regular sequence, then so is $a_1^{v_1},\ldots,a_n^{v_n}$ for any positive integers $v_1,\ldots,v_n$.
\end{thm}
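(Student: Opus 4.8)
The plan is to reduce the theorem to a single, much more tractable statement and then bootstrap. It is convenient to prove everything for a sequence that is regular on an arbitrary $R$-module $M$ (where $b_1,\dots,b_m$ being \emph{$M$-regular} means each $b_j$ is a nonzerodivisor on $M/(b_1,\dots,b_{j-1})M$); the theorem is the special case $M=R$. First I would isolate the core \emph{single-term} claim: if $a,b_1,\dots,b_m$ is $M$-regular, then so is $a^{v},b_1,\dots,b_m$ for every $v\ge 1$. Granting this, the full statement follows by replacing the terms from left to right without ever permuting them. Indeed, once $a_1^{v_1},\dots,a_{i-1}^{v_{i-1}},a_i,\dots,a_n$ is known to be regular on $R$, the definition says $a_i,\dots,a_n$ is $\overline{M}$-regular for $\overline{M}=R/(a_1^{v_1},\dots,a_{i-1}^{v_{i-1}})$; applying the single-term claim inside $\overline{M}$ to its leading term $a_i$ upgrades this to $a_i^{v_i},a_{i+1},\dots,a_n$, i.e.\ makes $a_1^{v_1},\dots,a_i^{v_i},a_{i+1},\dots,a_n$ regular on $R$. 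After $n$ such steps the theorem is proved.

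The single-term claim itself I would attack via a filtration. That $a^{v}$ is a nonzerodivisor on $M$ is immediate by iterating the fact that $a$ is one, so the content is that $b_1,\dots,b_m$ is regular on $M/a^{v}M$, given that it is regular on $M/aM$. Here I would filter $M/a^{v}M$ by the submodules $a^{j}M/a^{v}M$ for $0\le j\le v$ and observe that multiplication by $a^{j}$ induces an isomorphism $M/aM \xrightarrow{\ \sim\ } a^{j}M/a^{j+1}M$: surjectivity is clear, and the kernel is exactly $aM$ because $a$, hence $a^{j}$, is a nonzerodivisor on $M$. Thus every successive quotient of the filtration is isomorphic to $M/aM$, on which $b_1,\dots,b_m$ is regular by hypothesis, and it remains only to propagate regularity up the filtration.

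For that I need the \emph{extension principle}: given a short exact sequence $0\to M'\to M''\to M'''\to 0$ on whose outer terms $b_1,\dots,b_m$ is regular, it is regular on $M''$ as well. I would prove this by induction on $m$ using the snake lemma applied to multiplication by $b_1$. The case $m=1$, which also yields that $b_1$ is a nonzerodivisor on $M''$, is a direct diagram chase; for the inductive step, since $b_1$ is a nonzerodivisor on $M'''$ the $b_1$-torsion of $M'''$ vanishes and the snake sequence
\[
0 \to {}_{b_1}M' \to {}_{b_1}M'' \to {}_{b_1}M''' \to M'/b_1M' \to M''/b_1M'' \to M'''/b_1M''' \to 0
\]
collapses to a short exact sequence $0\to M'/b_1M'\to M''/b_1M''\to M'''/b_1M'''\to 0$, to which the inductive hypothesis applies with $b_2,\dots,b_m$. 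Feeding the filtration of the previous paragraph into this principle one layer at a time then yields regularity on $M/a^{v}M$. The main obstacle is exactly this interface between regular sequences and short exact sequences: the whole argument hinges on the $b_1$-torsion term in the snake sequence vanishing, which is what keeps the reduction modulo $b_1$ short exact and lets the induction close. Everything else is bookkeeping once the extension principle and the identification of the associated graded of the $a$-adic filtration are in hand.
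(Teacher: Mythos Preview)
Your argument is correct. Note, however, that the paper does not provide its own proof of this statement: Theorem~\ref{matsu} is quoted from Matsumura's \emph{Commutative Ring Theory} (Theorem~16.1) and used as a black box in the proof of Lemma~\ref{free}, so there is no in-paper argument to compare against. Your reduction to a single-term replacement (upgrading only the leading element to a power), combined with the $a$-adic filtration of $M/a^{v}M$ whose successive quotients are all isomorphic to $M/aM$, and the snake-lemma extension principle propagating regularity through short exact sequences, is a clean and standard route to the result and is essentially the approach Matsumura himself takes.
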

\begin{thm}[{\cite[Corollary of Theorem 16.3]{M}}]
\label{matsu2}
Let $A$ be a Noetherian ring and non-negatively graded.
If $a_1, \ldots, a_n$ is a regular sequence in $A$ and each $a_i$ is homogeneous of positive degree,
then any permutation of $a_1, \ldots, a_n$ is again a regular sequence.
\end{thm}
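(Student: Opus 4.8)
The plan is to reduce an arbitrary permutation to a product of adjacent transpositions, and to reduce an adjacent transposition to the interchange of the first two terms of a regular sequence by passing to a quotient ring. Since the symmetric group is generated by adjacent transpositions, it suffices to show that if $a_1, \ldots, a_n$ is a regular sequence of homogeneous elements of positive degree, then interchanging $a_i$ and $a_{i+1}$ yields a regular sequence. I would work in $\bar{A} = A/(a_1, \ldots, a_{i-1})$, which carries an induced non-negative grading because an ideal generated by homogeneous elements is homogeneous, and in which $\bar{a}_i, \ldots, \bar{a}_n$ is again a regular sequence of homogeneous positive-degree elements. By the very definition of a regular sequence, the desired statement then reduces to the core claim: if $a, b, c_1, \ldots, c_m$ is such a regular sequence, so is $b, a, c_1, \ldots, c_m$.

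For the core claim, note that $(a,b) = (b,a)$, so the tail $c_1, \ldots, c_m$ remains regular modulo this common ideal automatically; everything therefore comes down to two assertions: first, that $b$ is a nonzerodivisor on $A$; second, that $a$ is a nonzerodivisor on $A/(b)$.

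The first assertion is where the homogeneity and positivity of degrees are indispensable. Since $b$ is homogeneous, its annihilator is a homogeneous ideal, so I may assume $bx = 0$ with $x$ homogeneous. Reducing modulo $a$ and using that $b$ is a nonzerodivisor on $A/(a)$ gives $x \in (a)$, say $x = a x_1$; substituting into $bx = 0$ and cancelling $a$ (a nonzerodivisor on $A$) yields $b x_1 = 0$, so the same reasoning applies to $x_1$. Iterating produces $x \in (a^k)$ for every $k$, so $x$ is homogeneous of degree at least $k \deg a$. As $\deg a > 0$ and the grading is non-negative, this is impossible for large $k$ unless $x = 0$. This degree argument is the decisive step, and it is exactly why the theorem fails without the graded positive-degree hypotheses.

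The second assertion is a purely formal chase: if $ax \in (b)$, write $ax = by$; then $by \in (a)$, and since $b$ is a nonzerodivisor on $A/(a)$ we get $y \in (a)$, say $y = a y_1$, whence $ax = a b y_1$ and cancelling $a$ gives $x = b y_1 \in (b)$. Combining the two assertions proves the core claim, the adjacent-transposition case follows by lifting through $\bar{A}$, and the full theorem follows by composing adjacent swaps (each intermediate sequence is again a permutation of the original, hence still consists of homogeneous positive-degree elements to which the adjacent case applies). I expect the main obstacle to be the first assertion---more precisely, the realization that one must restrict to homogeneous elements and exploit the unbounded degree growth of the powers $(a^k)$; the remaining steps are routine bookkeeping with the definition of regularity.
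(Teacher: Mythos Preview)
The paper does not supply its own proof of this statement; it simply quotes it as a corollary from Matsumura's \emph{Commutative Ring Theory}. Your argument is a correct, self-contained proof. The reduction to adjacent transpositions, and then to swapping the first two terms by passing to the graded quotient $\bar A=A/(a_1,\dots,a_{i-1})$, is the standard scaffolding, and the core step---showing that $b$ is a nonzerodivisor on $A$ by iterating $x\in(a^k)$ and using that a nonzero homogeneous element cannot lie in $(a^k)$ once $k\deg a$ exceeds its degree---is exactly the right idea.

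Two minor remarks. First, when you write $x=ax_1$ you should explicitly replace $x_1$ by its homogeneous component of degree $\deg x-\deg a$ before iterating, so that every $x_k$ stays homogeneous and the degree bound applies at each step; you gesture at this (``I may assume $\dots x$ homogeneous'') but the iteration needs it at every stage. Second, your proof never actually uses the Noetherian hypothesis: termination comes from the grading, not from any chain condition. So you have in fact proved a slightly sharper statement than the one cited. (Matsumura's route, by contrast, deduces the graded case from the local case via Theorem~16.3, where Noetherianness enters through the Krull intersection theorem; your direct degree argument bypasses that machinery.)
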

\begin{thm}[{cf. \cite[Theorem 5.5.1]{NS}}]
\label{NS}
Let $F$ be a field and $R = F[g_i \suchthat 1 \leq i \leq m]$
a non-negatively graded polynomial ring with $|g_i|>0$ for any $1 \leq i \leq m$.
Assume that $a_1, \ldots a_n$ is a regular sequence in $R$, which consists of homogeneous elements of positive degree.
Then the Poincar\'e series of $R/(a_i \suchthat 1 \leq i \leq n)$ is given as
\[
\frac{\prod_{i=1}^n (1-x^{|a_i|})}{\prod_{i=1}^m (1-x^{|g_i|})}.
\]
\end{thm}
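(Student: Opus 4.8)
The plan is to prove this by induction on $n$, using the short exact sequences that a regular sequence produces; the statement is the graded analogue of the Bezout-type Hilbert-series count for a complete intersection.

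Before the induction I would record two standing remarks and dispose of the base case. Since each $|g_i|$ is positive, every graded component of $R=F[g_i\suchthat1\le i\le m]$ is finite-dimensional over $F$, hence so is every graded component of every subquotient of $R$; thus the Poincar\'e series $P(M,x)=\sum_d(\dim_FM_d)\,x^d$ is defined for every module in sight, it is additive along short exact sequences of graded $F$-vector spaces with finite-dimensional pieces, and it is multiplied by $x^e$ under the grading shift $M\mapsto M(-e)$, where $M(-e)_d=M_{d-e}$. Moreover, because the $a_i$ are homogeneous of positive degree, the ideal $(a_1,\dots,a_n)$ is contained in the irrelevant ideal $R_{>0}$, so none of the successive quotients collapses to the zero ring. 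For $n=0$ the assertion reduces to the familiar identity $P(R,x)=\prod_{i=1}^m(1-x^{|g_i|})^{-1}$, obtained by grouping the monomial basis $\{\prod_i g_i^{e_i}\}$ of $R$ by multidegree.

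For the inductive step I would set $R'=R/(a_1,\dots,a_{n-1})$, for which $P(R',x)=\dfrac{\prod_{i=1}^{n-1}(1-x^{|a_i|})}{\prod_{i=1}^m(1-x^{|g_i|})}$ by the inductive hypothesis, and invoke the definition of a regular sequence: the image $\bar a_n$ of $a_n$ in $R'$ is a non-zero-divisor. Hence multiplication by $\bar a_n$ is injective and raises degree by $|a_n|$, giving a short exact sequence of graded $F$-vector spaces
\[
0\longrightarrow R'(-|a_n|)\stackrel{\,\cdot\,\bar a_n}{\longrightarrow}R'\longrightarrow R/(a_1,\dots,a_n)\longrightarrow0.
\]
Passing to Poincar\'e series and using additivity together with the shift rule gives $P(R/(a_1,\dots,a_n),x)=(1-x^{|a_n|})\,P(R',x)$, which is exactly the claimed formula, so the induction closes.

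There is really no obstacle to overcome: the entire content sits in the two points where the hypotheses are used --- positivity of the $|g_i|$, which makes the Poincar\'e series well defined and finite in each degree, and the regular-sequence condition, which is precisely what forces each ``multiply by $\bar a_i$'' map to be injective and hence contributes exactly one factor $1-x^{|a_i|}$ at each stage. Homogeneity and the positivity of the degrees $|a_i|$ serve only to keep those maps degree-preserving up to shift and to keep every quotient nonzero.
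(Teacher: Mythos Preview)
Your proof is correct and follows essentially the same approach as the paper: induction on $n$, using that multiplication by $\bar a_n$ is injective on $R/(a_1,\dots,a_{n-1})$ to obtain the recursion $P(R/(a_1,\dots,a_n),x)=(1-x^{|a_n|})P(R/(a_1,\dots,a_{n-1}),x)$. Your version is more explicit about the shifted short exact sequence and the finite-dimensionality of the graded pieces, but the argument is the same.
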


\begin{proof}
For a non-negatively graded $F$-module $M$ of finite type, let $P(M,x)$ denote the Poincar\'e series of $M$,
namely
\[
P(M,x) = \sum_{n=0}^\infty (\dim_F M_n)x^n,
\]
where $M_n$ denotes the degree $n$ part of $M$.
Then obviously we have
\[
P(R,x) = \frac{1}{\prod_{i=1}^m (1-x^{|g_i|})}.
\]
Since $a_1, \ldots a_n$ is a regular sequence,
the multiplication by $a_i$ induces an injection
on a graded $F$-module $R/(a_1,\ldots,a_{i-1})$.
Therefore
\[
P(R/(a_1,\ldots,a_{i}),x) = (1-x^{|a_i|})P(R/(a_1,\ldots,a_{i-1}),x).
\]
The induction on $i$ completes the proof.
\end{proof}

\begin{proof}[Proof of Lemma \ref{free}]
Let $p$ be a prime number and
\[
M = (\Z[t_i, \gamma, \tau_i, \gamma_i, \omega \suchthat 1 \leq i \leq 4]
/\{ r'_1, R_i, r_{2i}, r_{12} \suchthat 1 \leq i \leq 4 \}),
\]
where $|t_i|=2$, $|\gamma_i|=2i$, $|\omega| = 4$.
We will show that the Poincar\'e series of
$M \otimes (\Z/p\Z)$ does not depend on $p$.
Then the graded $\Z$-module $M$ of finite type must be free.
The relations \eqref{r_2} and \eqref{r_4} say that
\[
\gamma_2 = \gamma_1(\gamma_1 +e_1(t)), \qquad \gamma_4 = -\biggr( \sum_{j=1}^{3} (-1)^j \gamma_j(\gamma_{4-j} +e_{4-j}(t)) -\omega \biggl),
\]
and then we can erase $\gamma_2$ and $\gamma_4$.
Let $R$ denote the polynomial ring $\Z[t_i, \gamma, \tau_i, \gamma_1, \gamma_3, \omega \suchthat 1 \leq i \leq 4]$,
$r_1'$, $R_i$'s, and $r_i$'s also denote the corresponding elements of $R$,
and $I$ denote the ideal generated by $\{ r'_1, R_i, r_6, r_8, r_{12} \suchthat 1 \leq i \leq 4 \}$ in $R$.
Since $M \cong R/I$, it is sufficient to compute the Poincar\'e series of $(R/I) \otimes (\Z/p\Z)$.

When $p=2$, we show that that the sequence
\[
r'_1, r_{12}, R_4, R_3, R_2, R_1, r_6, r_8
\]
is regular and compute the Poincar\'e series from this sequence.
In $(R/I)\otimes (\Z/2\Z)$, we have
\begin{align*}
& r'_1= e_1(t),\\
& R_1	= -(e_1(\tau)-e_1(t)), && R_2 = -(e_2(\tau)-e_2(t)) +(e_1(\tau)-e_1(t))e_1(t),\\
& R_3	= -(e_3(\tau)-e_3(t)), && R_4 = e_4(\tau) -e_4(t) -\omega,\\
&r_6 \equiv \gamma_3^2  \pmod{\gamma, e_i(t), \omega \suchthat 1 \leq i \leq 4},
&&r_8 \equiv \gamma_4^2 \equiv \gamma_2^2 \equiv \gamma_1^8  \pmod{\gamma, e_i(t),\omega \suchthat 1 \leq i \leq 4}.
\end{align*}

It is well known that the sequence of the elementary symmetric polynomials
\[
e_1(x), e_2(x), \ldots , e_n(x),
\]
that is, the sequence of the Chern classes 
is regular in $(\Z/p\Z)[x_i \suchthat 1\leq i \leq n]$ for any prime $p$.
Since a polynomial ring over a field is Noetherian,
by Theorem \ref{matsu2}, the sequence
\[
\gamma, e_1(t), e_2(t), e_3(t), e_4(t), \omega, e_4(\tau), e_3(\tau), e_2(\tau), e_1(\tau), \gamma_3^2, \gamma_1^8
\]
is regular in $R\otimes (\Z/2\Z)$.
We modify this sequence by Theorem \ref{matsu} and Proposition \ref{a+b} to obtain the following regular sequence.
\[
\gamma, r'_1, e_2(t), e_3(t), e_4(t), \omega^3, R_4, R_3, R_2, R_1, r_6, r_8
\]
Since $\rho^2 t_4 = -\gamma$ and $e_4(\rho t) = -e_4(t) -e_4(\rho^2 t) \equiv 0 \bmod{(\gamma, e_4(t))}$,
by Proposition \ref{a+b}
\[
\gamma, r'_1, e_2(t), e_3(t), e_4(t), r_{12}, R_4, R_3, R_2, R_1, r_6, r_8
\]
is a regular sequence.
Hence 
\[
r'_1, r_{12}, R_4, R_3, R_2, R_1, r_6, r_8
\]
is a regular sequence by Proposition \ref{ai}.
Finally, the Poincar\'e series of $(R/I) \otimes (\Z/2\Z)$
is calculated from the degrees of the generators and the relations by Theorem \ref{NS},
and we have
\begin{align*}
P(M \otimes (\Z/2\Z),x) = \frac{1}{(1-x^2)^4} (1 +x^8 +x^{16}) \prod_{i=1}^4 \frac{1-x^{4i}}{1-x^2}.
\end{align*}

Next let us consider the case where $p \geq 3$.
Let $e_1$, $e_2$, $e_3$, and $e_4$ be the left-hand sides of \eqref{e1}, \eqref{e2}, \eqref{e3}, and \eqref{e4} respectively, namely
\begin{align*}
e_1  &= e_1(\tau^2) -e_2(t^2),			&e_2  &= e_2(\tau^2) -e_2(t^2) -6\omega,\\
e_3  &= e_3(\tau^2) -e_2(t^2) -e_1(t^2)\omega,	&e_4  &= e_4(\tau^2) -e_4(t^2) +3\omega^2 -2(e_4(\rho t) -e_4(\rho^2 t))\omega.
\end{align*}
Recall that $e_1$, $e_2$, $e_3$, and $e_4$ are divided by $4$
to yield $r_2$, $r_4$, $r_6$, and $r_8$ respectively.
We have
\begin{align*}
M \otimes (\Z/p\Z)
 & \cong \Bigl(\Z[t_i, \gamma, \tau_i, \gamma_i, \omega \suchthat 1 \leq i \leq 4]
   /( r'_1, R_i, e_{2i}, r_{12} \suchthat 1 \leq i \leq 4 )\Bigr)\otimes (\Z/p\Z)\\
 & \cong \Big(\Z[t_i, \gamma, \tau_i, \omega \suchthat 1 \leq i \leq 4]
   /( r'_1, e_{2i}, r_{12} \suchthat 1 \leq i \leq 4 )\Big)\otimes (\Z/p\Z).
\end{align*}
since $2$ is invertible in $\Z/p\Z$.
We will show the sequence
\[
r'_1, r_{12}, e_8, e_6, e_4, e_2
\]
is a regular sequence.
It is well known that the sequence of elementary symmetric polynomial in $\{x_i^2\}_{i=1}^n$
\[
e_1(x^2), e_2(x^2), \ldots , e_n(x^2),
\]
that is, the sequence of the Pontryagin classes
is regular in $(\Z/p\Z)[x_i \suchthat 1\leq i \leq n]$ for any prime $p$.
By Theorem \ref{matsu2}, the sequence
\[
\gamma, e_1(t), e_2(t), e_3(t), e_4(t), \omega, e_4(\tau^2), e_3(\tau^2), e_2(\tau^2), e_1(\tau^2)
\]
is regular in $(\Z/p\Z)[t_i, \gamma, \tau_i, \omega \suchthat 1 \leq i \leq 4]$.
We modify this sequence by Theorem \ref{matsu} and Proposition \ref{a+b} to obtain the following regular sequence.
\[
\gamma, r'_1, e_2(t), e_3(t), e_4(t), r_{12}, e_8, e_6, e_4, e_2
\]
Hence
\[
r'_1, r_{12}, e_8, e_6, e_4, e_2
\]
is a regular sequence by Proposition \ref{ai}.
Therefore, by Theorem \ref{NS},
we have
\begin{align*}
P(M \otimes (\Z/p\Z),x) = \frac{1}{(1-x^2)^4} (1 +x^8 +x^{16}) \prod_{i=1}^4 \frac{1-x^{4i}}{1-x^2}.
\end{align*}
\end{proof}

\section{Proof of Corollary \ref{F4/T}}
\label{pf of Cor F4/T}

\begin{proof}
By the argument in Section \ref{intro} we have the isomorphisms
\begin{align*}
H^*(F_4/T) 	& \cong H^*_T(F_4/T)/(t_1, t_2, t_3, t_4, \gamma)\\
		& \cong \Z[\tau_i, \gamma_i, \omega \suchthat 1 \leq i \leq 4]/(Q_i, q_{2i}, q_{12} \suchthat 1 \leq i \leq 4),
\end{align*}
where
\begin{align*}
&Q_i = e_i(\tau) -2\gamma_i \:\: (i=1,2,3) , && Q_4 = e_4(\tau) -2\gamma_4 -\omega,\\
&q_2 = \gamma_2 - \gamma_1^2, && q_4 = \gamma_4 -2\gamma_1\gamma_3 +\gamma_2^2 - \omega, \\
&q_6 = 2\gamma_2\gamma_4 -\gamma_3^2 + \gamma_2 \omega, && q_8 = \gamma_4^2 +\gamma_4 \omega +\omega^2, \\
&q_{12} = \omega^3.
\end{align*}
We can regard $\gamma_2$ and $\gamma_4$ as dependent variables
by the relations $q_2$ and $q_4$.
Let $R$ be the polynomial ring $\Z[\tau_i, \gamma_1, \gamma_3, \omega \suchthat 1 \leq i \leq 4]$.
Then
\[
H^*(F_4/T) \cong R/(Q_i, q_6, q_8, q_{12} \suchthat 1 \leq i \leq 4).
\]
Obviously we have
\begin{align*}
& Q_i = -\overline{r}_i \:\: (i=1,2,3), && Q_4 \equiv \overline{r}_4 \pmod{Q_3}, \\
& q_6 \equiv \gamma_2e_4(\tau) -\gamma_3^2 = -\overline{r}_6 \pmod{Q_4}, && q_{12} = \overline{r}_{12}.
\end{align*}
Moreover we have
\begin{align*}
q_8 &= 4\gamma_1^2\gamma_3^2 -4\gamma_1^5\gamma_3 +\gamma_1^8 +3\omega(\omega +2\gamma_1\gamma_3) -3\gamma_1^4\omega \\
    &\equiv 8\gamma_1^4 \gamma_4 +4\gamma_1^4 \omega -4\gamma_1^5 \gamma_3 +\gamma_1^8 +3\omega(\omega +2\gamma_1\gamma_3) -3\gamma_1^4 \omega \pmod{q_6}\\
    &= 12\gamma_1^5 \gamma_3 -7\gamma_1^8 +9\gamma_1^4 \omega +3\omega(\omega +2\gamma_1\gamma_3) -3\gamma_1^4 \omega.
\end{align*}
On the other hand,
\begin{align*}
\overline{r}_8 	& = 3e_4(\tau) \gamma_1^4 -\gamma_1^8 +3\omega(\omega +e_3(\tau)\gamma_1)\\
			& \equiv 3(2\gamma_4 +\omega)\gamma_1^4 -\gamma_1^8 +3\omega(\omega +2\gamma_1\gamma_3) \pmod{Q_3,Q_4} \\
                  & = 12\gamma_1^5 \gamma_3 -7\gamma_1^8 +9\gamma_1^4 \omega +3\omega(\omega +2\gamma_1\gamma_3) -3\gamma_1^4 \omega.
\end{align*}
Hence $q_8 \equiv \overline{r}_8 \pmod{q_6, Q_3, Q_4}$.
Therefore
\[
H^*(F_4/T) \cong R/(Q_i, q_6, q_8, q_{12} \suchthat 1 \leq i \leq 4) \cong R/(\overline{r}_1, \overline{r}_2, \overline{r}_3, \overline{r}_4, \overline{r}_6, \overline{r}_8, \overline{r}_{12}).
\]
\end{proof}


\begin{thebibliography}{99}

\bibitem[A]{A}  J. F. Adams,
{\it Lectures on exceptional Lie groups}
University of Chicago Press, Chicago, 1996.

\bibitem[F]{F}  Y. Fukukawa,
{\it The cohomology ring of the GKM graph of a flag manifold of type $G_2$},
arXiv:1207.5229v1.

\bibitem[FIM]{FIM}  Y. Fukukawa, H. Ishida, and M. Masuda,
{\it The cohomology ring of the GKM graph of a flag manifold of classical type},
arXiv:1104.1832v3.

\bibitem[GKM]{GKM}  M. Goresky, R. Kottwitz, and R. MacPherson,
{\it Equivariant cohomology, Koszul duality and the localization theorem},
Invent. Math. {\bf 131} (1998), no. 1, 25-83.

\bibitem[GZ]{GZ}  V. Guillemin and C. Zara,
{\it 1-skeleta, Betti numbers, and equivariant cohomology},
Duke Math. J. {\bf 107} (2001), no. 2, 283-349.

\bibitem[H]{H}  W. Y. Hsiang,
{\it Cohomology Theory of Topological Transformation Groups}
Ergebnisse der Mathematik und ihrer Grenzgebiete, Band 85. Springer-Verlag, New York-Heidelberg, 1975.

\bibitem[HHH]{HHH}  M. Harada, A. Henriques, and T. Holm,
{\it Computation of generalized equivariant cohomologies of Kac-Moody flag varieties},
Adv. Math. {\bf 197} (2005), no 1, 198-221.

\bibitem[M]{M}  H. Matsumura,
{\it Commutative ring theory},
Cambridge studies in advanced mathematics, {\bf 8},
Cambridge University Press, Cambridge, 1986.

\bibitem[NS]{NS}  M. D. Neusel, L. Smith,
{\it Invariant theory of finite groups},
Mathematical Surveys and Monographs Volume {\bf 94},
American Mathematical Society, Providence, 2002.

\bibitem[TW]{TW}  H. Toda and T. Watanabe,
{\it The integral cohomology ring of $F_4/T$ and $E_6/T$},
J. Math. Kyoto Univ. {\bf 14-2} (1974), 257-286.

\end{thebibliography}
\end{document}